\title{Log canonical pairs with conjecturally minimal volume}
\author{Louis Esser and Burt Totaro}
\date{  }
\def\Q{\text{\bf Q}}
\def\R{\text{\bf R}}
\def\C{\text{\bf C}}
\def\P{\text{\bf P}}
\DeclareMathOperator{\vol}{vol}
\DeclareMathOperator{\Aut}{Aut}
\DeclareMathOperator{\mld}{mld}
\def\T{\mathsf{T}}
\def\sing{\text{sing}}
\begin{document}
\maketitle
\newtheorem{theorem}{Theorem}[section]
\newtheorem{proposition}[theorem]{Proposition}
\newtheorem{corollary}[theorem]{Corollary}
\newtheorem{lemma}[theorem]{Lemma}

\theoremstyle{definition}
\newtheorem{definition}[theorem]{Definition}
\newtheorem{example}[theorem]{Example}
\newtheorem{conjecture}[theorem]{Conjecture}

\theoremstyle{remark}
\newtheorem{remark}[theorem]{Remark}

\numberwithin{equation}{section}

Consider the problem of finding a complex projective log canonical pair
$(X,B)$ with $B$ a nonzero reduced divisor and $K_X+B$ ample
such that the volume of $K_X+B$ is as small as possible.
This problem arises naturally in attempts to classify
stable varieties of general type \cite[Remark 7.10]{LS}.
We know that there is some positive lower
bound for the volume in each dimension,
by Hacon-M\textsuperscript{c}Kernan-Xu \cite[Theorem 1.6]{HMXbounded}.

V.~Alexeev and W.~Liu constructed a log canonical pair $(X,B)$ of dimension 2
with $B$ a nonzero reduced divisor and $K_X+B$ ample
such that $K_X+B$ has volume $1/462$ \cite[Theorem 1.4]{AL}.
J.~Liu and V.~Shokurov showed that this example is not at all arbitrary:
it has the smallest possible volume in dimension 2,
under the given conditions \cite[Theorem 1.4]{LS}. (See also Koll\'ar's
example in the broader class of log canonical pairs with standard
coefficients, Remark \ref{kollarexample}.)

In this paper, we give a simpler description of Alexeev-Liu's example:
it is a non-quasi-smooth hypersurface in a weighted projective space,
$X_{42}\subset \P^3(21,14,6,11)$, with $B$ the curve $\{ x_3=0\}
\cap X$. (This fits into a remarkable number of classification
problems in algebraic geometry for which the extreme case is known
or conjectured to be a weighted hypersurface \cite{ETWgeneral,ETWCalabi}.)
We generalize that construction to produce a log canonical
pair $(X,B)$ of any dimension
with $B$ a nonzero reduced divisor such that $K_X+B$ is ample
and has extremely small volume.
We conjecture that our
example has the smallest possible volume of $K_X+B$ in every dimension.
The volume is doubly exponentially small in terms of the dimension.

A similar story was worked out earlier in the case $B=0$,
where smaller volumes can occur.
Namely, Alexeev and Liu constructed a projective klt surface
with ample canonical class and volume $1/48983$ \cite[Theorem 1.4]{AL}.
Totaro found that their surface is a non-quasi-smooth hypersurface
in a weighted projective space,
$X_{438}\subset \P^3(219,146,61,11)$. Generalizing that
construction, he produced a klt variety of each dimension
with ample canonical class and conjecturally minimal volume
\cite[Theorem 2.1]{Totaroklt}.

We also develop examples for some related extremal problems.
Esser constructed
a klt Calabi--Yau variety which conjecturally has the smallest minimal
log discrepancy
in each dimension \cite[Conjecture 4.4]{Essermld}. (In particular,
this variety has mld $1/13$ in dimension 2, $1/311$
in dimension 3, and $1/677785$ in dimension 4. In dimension 2,
we know that $1/13$ is the smallest possible mld
\cite[Proposition 6.1]{ETWCalabi}.)
However,
the properties of Esser's example were not 
worked out in all dimensions.
We now prove the desired properties of Esser's example
(in particular, determining its mld $1/m$),
as Theorem \ref{kltvariety}.
Using this example, it follows
that the ``first gap of global lc thresholds'' (in Liu-Shokurov's
terminology) is at most
the same number $1/m$, meaning that there is a klt Calabi--Yau pair
$(X,(1-\frac{1}{m})S)$ with $S$ an irreducible divisor.
We present such a ``pair'' variant of Esser's example
explicitly in Theorem \ref{pairmld}.

Likewise, Wang and the authors constructed a klt Calabi--Yau variety
which conjecturally has the largest index in each dimension
\cite[Conjecture 7.10]{ETWCalabi}. This variety has 
index $19$ in dimension
$2$, $493$ in dimension $3$, and $1201495$ in dimension $4$.
(In dimension 2,
we know that $19$ is the largest possible index
\cite[Proposition 6.1]{ETWCalabi}.)
The numerics in this example
are extremely similar to Esser's, and so we can now give clearer proofs of 
many of its properties (section \ref{largeindexintrosection}).
Nevertheless, the precise formula for the index 
depends on a conjecture that two explicit numbers
are relatively prime (Conjecture \ref{conj-index}), which holds
in dimensions at most 30.

In section \ref{asymptotics_section},
we compute asymptotics for the small mld
and large index examples that 
provide additional evidence for their optimality.

Totaro was supported by NSF grant DMS-2054553,
Simons Foundation grant SFI-MPS-SFM-00005512,
and the Charles Simonyi Endowment
at the Institute for Advanced Study.

\section{Notation}
\label{notation}

Our examples use {\it Sylvester's sequence}, defined
by $s_0=2$ and $s_{j+1}=s_j(s_j-1)+1$. The sequence
begins $2,3,7,43,1807,\ldots$. We have $s_{j+1}
=s_0\cdots s_j+1$, and hence the numbers in Sylvester's sequence
are pairwise coprime. The key property of this sequence is that
$$\frac{1}{s_0}+\cdots+\frac{1}{s_{j}}=1-\frac{1}{s_{j+1}-1}.$$
The sequence $s_j$ grows doubly exponentially, with
$s_j>2^{2^{j-1}}$ for all $j\geq 0$.

For positive integers $a_0,\ldots,a_n$, the weighted
projective space $Y=\P^n(a_0,\ldots,a_n)$ means
the quotient variety $(A^{n+1}-0)/G_m$ over $\C$, where
the multiplicative group $G_m$ acts by
$t(x_0,\ldots,x_n)=(t^{a_0}x_0,\ldots,t^{a_n}x_n)$
\cite[section 6]{Iano-Fletcher}. We say that $Y$ is
{\it well-formed }if $\gcd(a_0,\ldots,\widehat{a_j},\ldots,
a_n)=1$ for each $j$, which means 
that the $G_m$-action is free in codimension 1.
For a well-formed weighted projective space $Y$
and an integer $m$,
$O_Y(m)$ is the reflexive sheaf associated
to a Weil divisor. The divisor class
$O_Y(m)$ is Cartier if and only if $m$ is a multiple
of every weight $a_j$. Well-formedness of $Y$ ensures that the canonical
divisor is given by $K_Y=O_Y(-\sum a_j)$.

Let $Y$ be a well-formed weighted projective space.
A closed subvariety $X$ of $Y$ is called
{\it quasi-smooth }if its affine cone in $A^{n+1}$
is smooth outside the origin. In particular,
a quasi-smooth subvariety has only cyclic quotient
singularities and hence is Kawamata log terminal (klt).
(A reference for the singularities of the minimal model program
such as klt, plt, lc is \cite[Definition 2.8]{Kollarsing}.)
Also, $X$ is {\it well-formed }if
$Y$ is well-formed and the codimension of $X\cap Y^{\sing}$
in $X$ is at least 2. (For a well-formed weighted projective
space $Y$, the singular locus of $Y$
corresponds to the locus where the $G_m$-action is not free.)
Iano-Fletcher proved the following sufficient criterion for well-formedness
\cite[Theorem 6.17]{Iano-Fletcher}.

\begin{proposition}
\label{iano}
As long as the degree $d$
is not equal to any of the weights, every quasi-smooth hypersurface
of dimension at least $3$ in a well-formed weighted projective space
is well-formed.
\end{proposition}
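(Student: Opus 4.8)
The plan is to make the singular locus of $Y$ completely explicit and then bound the dimension of its intersection with $X$. Since $Y=\P^n(a_0,\dots,a_n)$ is the quotient of $A^{n+1}\setminus 0$ by $G_m$, its singular locus is the image of the locus where the action is not free, namely $\{x:\gcd(a_i:x_i\neq 0)\geq 2\}$. Grouping points by a prime dividing that gcd, we get $Y^{\sing}=\bigcup_p\Pi_p$, where $p$ runs over primes and $\Pi_p\subset Y$ is the coordinate weighted projective subspace cut out by the vanishing of the $x_i$ with $p\nmid a_i$. Writing $S_p=\{i:p\mid a_i\}$, so that $\dim\Pi_p=|S_p|-1$, well-formedness of $Y$ says precisely that $|S_p|\leq n-1$ for every $p$. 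Hence it suffices to prove $\dim(X\cap\Pi_p)\leq\dim X-2=n-3$ for each prime $p$, the cases $|S_p|\leq 1$ (where $\Pi_p$ is empty or a point) being immediate since $n\geq 4$.

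Next I would split according to whether $\Pi_p\subseteq X$. Write $X=\{f=0\}$. If $\Pi_p\not\subseteq X$, then $f|_{\Pi_p}$ is a nonzero weighted-homogeneous polynomial, so $X\cap\Pi_p$ is a hypersurface in $\Pi_p$ and $\dim(X\cap\Pi_p)\leq\dim\Pi_p-1=|S_p|-2\leq n-3$. If $\Pi_p\subseteq X$, then $\dim(X\cap\Pi_p)=\dim\Pi_p=|S_p|-1$, which is still $\leq n-3$ provided $|S_p|\leq n-2$. So the whole problem reduces to showing that a quasi-smooth $X$ of dimension at least $3$ cannot contain $\Pi_p$ when $|S_p|=n-1$; this is the heart of the argument.

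For that case I would, after renumbering, take $S_p=\{2,\dots,n\}$, so that $\Pi_p=\{x_0=x_1=0\}$ is a weighted projective space of dimension $n-2\geq 2$, and $\Pi_p\subseteq X$ means every monomial of $f$ is divisible by $x_0$ or by $x_1$. I would then study the affine cone $C_X=\{f=0\}\subset A^{n+1}$ along $\widehat{\Pi}_p=\{x_0=x_1=0\}$. Using Euler's relation, the singular locus of $C_X$ is the common zero locus of the partials $\partial f/\partial x_i$; inspection of monomials shows that on $\widehat{\Pi}_p$ every $\partial f/\partial x_i$ with $i\geq 2$ vanishes identically, while $\partial f/\partial x_0$ and $\partial f/\partial x_1$ restrict to the weighted-homogeneous polynomials $\bar f_0=(\partial f/\partial x_0)|_{x_0=x_1=0}$ and $\bar f_1=(\partial f/\partial x_1)|_{x_0=x_1=0}$ in $\C[x_2,\dots,x_n]$, of degrees $d-a_0$ and $d-a_1$. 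Because $d\neq a_0$ and $d\neq a_1$ by hypothesis, each of $\bar f_0,\bar f_1$ is either zero or homogeneous of positive degree, so their common zero locus is an intersection of at most two hypersurfaces in $\Pi_p$, a projective variety of dimension $n-2\geq 2$, and is therefore nonempty. Any point of it is a point of $\widehat{\Pi}_p\setminus 0$ at which $C_X$ is singular, contradicting quasi-smoothness and finishing the proof.

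I expect the last case to be the only genuine obstacle, and it shows how both hypotheses are used: $\dim X\geq 3$ forces $\dim\Pi_p\geq 2$ so that the two residual hypersurfaces must meet, and $d\neq a_j$ prevents the residual partials $\bar f_0,\bar f_1$ from being nonzero constants. Both the statement and this argument genuinely fail for surfaces, and for hypersurfaces with $d$ equal to a weight (linear cones), which is a useful sanity check.
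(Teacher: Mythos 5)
Your argument is correct and complete: the reduction of $Y^{\sing}$ to the coordinate strata $\Pi_p$, the dimension count in the easy cases, and the quasi-smoothness contradiction via the restricted partials $\bar f_0,\bar f_1$ (using $d\neq a_0, a_1$ to rule out nonzero constants and $\dim\Pi_p\geq 2$ to force their common zero locus to be nonempty away from the cone point) all check out. Note that the paper does not prove this statement at all --- it is quoted as Iano-Fletcher's Theorem 6.17 --- and your proof is essentially the standard argument from that reference, so there is nothing in the paper to compare against beyond the citation.
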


For a well-formed normal hypersurface $X$ of degree $d$
in a weighted projective space $Y$, the canonical divisor
is given by $K_X=O_X(d-\sum a_j)$
\cite[section 1]{Totaroklt}. Here $X$ need not
be quasi-smooth.

A Weil divisor or more generally
a $\Q$-divisor is said to be {\it ample }if some positive multiple
is an ample Cartier divisor.
The {\it volume }of a $\Q$-divisor $D$
on a normal projective variety $X$ is
$$\vol(D) \coloneqq \lim_{m\to\infty} h^0(X,\lfloor mD\rfloor)/(m^n/n!),$$
where $n=\dim(X)$. (The volume in this generality is discussed
in \cite{FKL}.) The volume is equal to the intersection
number $D^n$ if $D$ is ample.
The volume of an ample Cartier divisor
is an integer, but that fails in general for an ample Weil divisor.
For example, the volume of $O_Y(1)$ on a well-formed weighted
projective space $Y=\P^n(a_0,\ldots,a_n)$ is $1/(a_0\cdots a_n)$.

A {\it pair }$(X,D)$ in this paper means
a normal variety $X$ with a $\Q$-divisor $D$ such that $K_X+D$
is $\Q$-Cartier. A pair
is {\it Calabi--Yau} if $D$ is effective and
$K_X+D$ is $\Q$-linearly equivalent to zero.
In that case, the {\it index }of
$(X,D)$ is the smallest positive integer $m$ such that $m(K_X+D)\sim 0$.
A pair has {\it standard coefficients }if each
coefficient of $D$ is of the form $1-\frac{1}{b}$
with $b$ a positive integer or $\infty$.
For a klt Calabi--Yau pair $(X,D)$ with standard coefficients
and index $m$, the (global) {\it index-1 cover }of $(X,D)$
is a projective variety $Y$ with canonical Gorenstein singularities
such that the canonical class $K_Y$ is linearly equivalent
to zero \cite[Example 2.47, Corollary 2.51]{Kollarsing}.
Here $(X,D)$ is the quotient of $Y$ by an action
of the cyclic group $\mu_m$ such that
$\mu_m$ acts faithfully on $H^0(Y,K_Y)\cong \C$.
(Explicitly, $D$ has coefficient $1-\frac{1}{b}$
on the image of an irreducible divisor on which the subgroup
of $\mu_m$ that acts as the identity has order $b$.)

For a pair $(X,D)$ and a proper birational morphism $\pi\colon Y\to X$
with $Y$ normal, there is a uniquely defined $\Q$-divisor $D_Y$ on $Y$
such that $K_Y+D_Y=\pi^*(K_X+D)$. The {\it log discrepancy }of $(X,D)$
with respect to an irreducible divisor $S$ on $Y$ is 1 minus the coefficient
of $S$ in $D_Y$. The {\it minimal log discrepancy }({\it mld}) of $(X,D)$
is the infimum of all log discrepancies of $(X,D)$
with respect to all irreducible divisors on all birational models of $X$.
Thus $(X,D)$ is klt if and only if its mld is positive,
and a pair with smaller mld can be considered more singular.

Berglund-H\"ubsch-Krawitz mirror symmetry considers
weighted-homogeneous polynomials of the following three basic types,
{\it Fermat}, {\it loop}, and {\it chain}, as well as combinations
of them in disjoint sets of variables \cite[section 2.2]{ABS}:
\begin{align*}
W_{\text{Fermat}}&=x^b,\\
W_{\text{loop}}&=x_1^{b_1}x_2+x_2^{b_2}x_3+\cdots +x_{n-1}^{b_{n-1}}x_n
+x_n^{b_n}x_1,\\
W_{\text{chain}}&=x_1^{b_1}x_2+x_2^{b_2}x_3+\cdots +x_{n-1}^{b_{n-1}}x_n
+x_n^{b_n}.
\end{align*}
The corresponding weighted projective hypersurfaces
are quasi-smooth
for any positive integers $b_i$, not all 1.

\section{A pair with nonzero boundary and small volume}

\begin{theorem}
\label{ample}
For each integer $n$ at least $2$, let 
$$a_{n+1} \coloneqq \begin{cases} \frac{1}{4}(s_n^2-s_n+2) &\text{if $n$ is even}\\
\frac{1}{4}(s_n^2-3s_n+4) &\text{if $n$ is odd.}
\end{cases}$$
Let $d \coloneqq s_{n+1}-1$, and $a_i \coloneqq d/s_i$ for $0\leq i\leq n$.
Then there is a complex hypersurface $X$ of degree $d$ in
$\P^{n+1}(a_0,\ldots,a_{n+1})$ that is well-formed and klt.
Let $B \coloneqq \{x_{n+1}=0\}\cap X$. Then the pair $(X,B)$ is purely log terminal
(hence log canonical),
$B$ is a nonzero reduced divisor,
$K_X+B$ is ample, and the volume of $K_X+B$ is
$$\frac{1}{(s_{n+1}-1)^{n-1}a_{n+1}},$$
which is asymptotic to $4/s_n^{2n}$.
In particular, this is less
than $1/2^{2^n}$.
\end{theorem}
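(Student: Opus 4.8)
The plan is to exhibit $X$ by an explicit equation and then reduce the whole statement to one local log canonical threshold computation. Take $X = \{f = 0\}$ with
$$f = x_0^{s_0} + x_1^{s_1} + \cdots + x_n^{s_n} + x_1 x_2 \cdots x_{n-1}\, x_n^e\, x_{n+1}^2,$$
where $e = 1$ if $n$ is even and $e = 2$ if $n$ is odd. Each Fermat monomial $x_i^{s_i}$ with $i \le n$ has degree $a_i s_i = d$; and using Sylvester's identity $\sum_{i=0}^n 1/s_i = 1 - 1/d$, equivalently $\sum_{i=0}^n a_i = d - 1$, one checks the last monomial also has degree exactly $d$ — this is what the stated formula for $a_{n+1}$ encodes, its two cases corresponding to the parity of $e$. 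For $n = 2$ this recovers the Alexeev--Liu surface, $f = x_0^2 + x_1^3 + x_2^7 + x_1 x_2 x_3^2$ in $\P^3(21,14,6,11)$.

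First I would pin down where $X$ fails to be quasi-smooth. Since $\partial f / \partial x_0 = s_0 x_0 = 2 x_0$, the affine cone $\widehat X$ is smooth off $\{x_0 = 0\}$; on $\{x_{n+1} = 0\}$ the equation becomes the quasi-smooth Fermat polynomial $\sum_{i=0}^n x_i^{s_i}$, so $\widehat X$ is smooth there away from the cone point; and since the last monomial of $f$ is divisible by every one of $x_1, \ldots, x_n$, a short case analysis (setting any one of them to zero leaves only Fermat terms) shows the remaining singular points of $\widehat X$ lie on the $x_{n+1}$-axis. Hence $X$ is quasi-smooth — so klt, and an orbifold — away from the single point $p = [0 : \cdots : 0 : 1]$, and a direct check shows $X$ is well-formed and normal. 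Next, $B = \{x_{n+1} = 0\} \cap X$ is the quasi-smooth Fermat hypersurface in $\P^n(a_0, \ldots, a_n)$, hence a nonzero reduced irreducible divisor not through $p$. By the canonical class formula for well-formed hypersurfaces and Sylvester's identity, $K_X + B \sim O_X(d - \sum_{i=0}^{n+1} a_i + a_{n+1}) = O_X(d - \sum_{i=0}^n a_i) = O_X(1)$, and likewise $K_B = O_B(1)$, so $(K_X + B)|_B \sim K_B$ and the (effective) different $\mathrm{Diff}_B$ vanishes; since $B$ is normal and $(B, 0)$ is klt, inversion of adjunction makes $(X, B)$ plt in a neighbourhood of $B$. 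Thus the whole problem comes down to showing that $X$ is klt at $p$.

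This local statement is the main obstacle. In the weighted chart $x_{n+1} \ne 0$, a neighbourhood of $p$ in $X$ is the quotient by $\mu_{a_{n+1}}$ of the hypersurface $\{x_0^2 + g(x_1, \ldots, x_n) = 0\} \subset \C^{n+1}$ with $g = x_1^{s_1} + \cdots + x_n^{s_n} + x_1 \cdots x_{n-1} x_n^e$; as finite quotients preserve klt, it suffices that this hypersurface — the double cover of $\C^n$ branched along $D = \{g = 0\}$ — is klt at the origin, which by the cyclic cover formula holds iff $(\C^n, \tfrac12 D)$ is klt at the origin, i.e. $\mathrm{lct}_0(\C^n, D) > \tfrac12$. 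The extra monomial is essential: for the Fermat part alone the threshold is $\sum_{i=1}^n 1/s_i = \tfrac12 - \tfrac1d < \tfrac12$, so the term $x_1 \cdots x_{n-1} x_n^e$ must raise it above $\tfrac12$, and this is what pins down $a_{n+1}$. I would compute $\mathrm{lct}_0(\C^n, D)$ from the Newton polyhedron $\Gamma_+(g)$: one checks $g$ is Newton-nondegenerate — every compact face carries some $x_k^{s_k}$ or the extra monomial $x^v$, $v = (1, \ldots, 1, e)$, and hence a partial derivative nowhere zero on $(\C^*)^n$ — so $\mathrm{lct}_0(\C^n, D) = \min(1, 1/\lambda)$ with $\lambda = \min\{t > 0 : t(1, \ldots, 1) \in \Gamma_+(g)\}$; then $\mathrm{lct}_0 > \tfrac12 \iff \lambda < 2 \iff (2, \ldots, 2)$ lies past the Newton boundary, which unwinds to the statement that the weight on the vertex $v$ needed to reach $(2, \ldots, 2)$ — roughly $4/(d+2)$ for $n$ even, roughly $1/a_{n+1}$ against the ceiling $1$ for $n$ odd — is admissible. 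The Sylvester telescoping $\tfrac1{s_0} + \cdots + \tfrac1{s_j} = 1 - \tfrac1{s_{j+1}-1}$ is exactly what makes this hold, and I expect the even/odd split to reflect which Newton facet the diagonal crosses.

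Granting that $X$ is klt at $p$, the rest is formal: $X$ is then klt everywhere and $(X, B)$ is plt, hence log canonical; $K_X + B = O_X(1)$ is $\Q$-Cartier and ample, being the restriction of the ample class $O_Y(1)$; and since $X$ has degree $d$, $\vol(O_Y(1)) = 1/(a_0 \cdots a_{n+1})$, and $\prod_{i=0}^n a_i = d^{n+1} / \prod_{i=0}^n s_i = d^{n+1}/(s_{n+1} - 1) = d^n$,
$$\vol(K_X + B) = \vol(O_X(1)) = \frac{d}{a_0 \cdots a_{n+1}} = \frac{1}{(s_{n+1} - 1)^{n-1} a_{n+1}}.$$
The asymptotic $\sim 4/s_n^{2n}$ is immediate from $s_{n+1} - 1 = s_n^2 - s_n$ and $a_{n+1} \sim \tfrac14 s_n^2$, and $\vol(K_X + B) < 1/2^{2^n}$ follows from $s_n > 2^{2^{n-1}}$, the case $n = 2$ (volume $1/462 < 1/16$) being checked by hand.
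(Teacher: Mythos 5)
Your construction is the same hypersurface as the paper's (the two parities of $a_{n+1}$ correspond exactly to the exponent $e\in\{1,2\}$ on $x_n$ in the last monomial), and the global outline coincides: locate the unique non-quasi-smooth point $p=[0:\cdots:0:1]$, reduce everything to a local singularity statement there, and compute the volume from $K_X+B\sim O_X(1)$. Where you genuinely diverge is in the two singularity arguments. For pltness along $B$ the paper describes the pair \'etale-locally as a quotient of $(\mathbb{A}^n,\mathbb{A}^{n-1})$, while you use adjunction plus inversion of adjunction via the vanishing of the different; both work. For the crucial klt statement at $p$, the paper simply cites the earlier computation that the affine hypersurfaces $x_0^2+x_1^3+\cdots+x_n^{s_n}+x_1\cdots x_{n-1}x_n^e=0$ have canonical singularities, whereas you give a self-contained route: double cover branched over $D=\{g=0\}$, so klt is equivalent to $\mathrm{lct}_0(\C^n,D)>\tfrac12$, computed from the Newton polyhedron. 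This is a legitimate and arguably more transparent argument, and the inequality does hold: for $n$ even the extra monomial has exponent vector $(1,\dots,1)$, which already lies in $\Gamma_+(g)$, so $\lambda=1$ and the threshold is $1$; for $n$ odd one finds $1/\lambda=\tfrac34-\tfrac1{2d}-\tfrac1{2s_n}>\tfrac12$.

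Two steps are asserted rather than proved and need to be filled in. First, well-formedness of $\P^{n+1}(a_0,\ldots,a_{n+1})$ and of $X$ is not ``a direct check'': it requires $\gcd(s_{n+1}-1,\,a_{n+1})=1$, which the paper establishes by showing $a_{n+1}$ is odd (a mod $8$ computation with Sylvester numbers) and coprime to both $s_n$ and $s_n-1$. This is not optional, since the adjunction formula $K_X=O_X(d-\sum a_j)$ — and hence your identity $K_X+B\sim O_X(1)$ and the volume — depends on it. Second, your key local computation is left at the level of ``I expect'': the nondegeneracy argument should say that for every proper compact face some logarithmic partial derivative is a single monomial (and that the full support does not lie on one compact face, since $v$ is strictly below the hyperplane $\sum x_i/s_i=1$), and the inequality $\lambda<2$ should be carried out explicitly as above rather than described ``roughly.'' Neither point is an obstruction — both verifications succeed — but as written the proposal does not contain them.
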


The numerology here is similar but not identical to that
of the klt variety with ample canonical class and conjecturally
minimal volume. In particular, the latter example involves
the same weight $a_{n+1}$ \cite[Theorem 2.1]{Totaroklt}. For comparison,
the volume of $K_X$ in that example is asymptotic
to $2^{2n+2}/s_n^{4n}$, which is much smaller than the volume
of $K_X+B$ above. (Requiring $B$ to be a nonzero reduced
divisor forces the volume to be bigger, it seems.)

Explicitly, define the variety $X$ in Theorem \ref{ample}
by the equation, for $n\geq 2$ even:
$$0=x_0^2+x_1^3+\cdots
+x_n^{s_n}+x_1\cdots x_nx_{n+1}^2.$$
For $n\geq 3$ odd, define $X$ by
$$0=x_0^2+x_1^3+\cdots
+x_{n}^{s_n}+x_1\cdots x_{n-1}x_n^2x_{n+1}^2.$$
Also, define $B \coloneqq \{x_{n+1}=0\} \cap X$.
Since the number of monomials is equal
to the number of variables, any linear combination
of these monomials with all coefficients nonzero
defines a variety isomorphic to $X$, by scaling
the variables. One can check
that the monomials shown are all the monomials of degree $d$,
and hence that an open subset
of all hypersurfaces of degree $d$ are isomorphic to the one indicated;
but we will not need those facts.

Note that $X$ is not quasi-smooth.

\begin{conjecture}
For each integer $n$ at least 2,
the pair in Theorem \ref{ample}
has the smallest volume among all projective lc pairs $(X,B)$
of dimension $n$ with $B$ a nonzero reduced divisor
and $K_X+B$ ample.
\end{conjecture}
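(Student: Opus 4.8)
The plan is to reduce the conjecture to a sharp lower bound, since Theorem~\ref{ample} already exhibits a pair of volume $\frac{1}{(s_{n+1}-1)^{n-1}a_{n+1}}$ with all the required properties. Existence of a minimizer is not the issue: by Hacon--M\textsuperscript{c}Kernan--Xu \cite[Theorem~1.6]{HMXbounded}, the set of volumes $\vol(K_X+B)$ over all $n$-dimensional projective lc pairs with $B$ a nonzero reduced divisor and $K_X+B$ ample satisfies the descending chain condition, so the infimum is attained; the content of the conjecture is to \emph{identify} this minimum. A useful orientation comes from the example itself. Using the Sylvester identity one checks $\sum_{i=0}^{n}a_i=d-1$, so that $K_X+B=O_X(1)$, and since $B=O_X(a_{n+1})$ we have the numerical proportionality $K_X+B=\frac{1}{a_{n+1}}B$. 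Hence $\vol(K_X+B)=\frac{1}{a_{n+1}}\,(K_X+B)^{n-1}\cdot B$, and the whole problem splits into bounding the restricted degree $(K_X+B)^{n-1}\cdot B$ from below and the transverse weight $a_{n+1}$ from above.

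The core strategy is induction on $n$ through adjunction. After passing to a convenient model of a minimizer (running a minimal model program and arguing that the extremal pair may be taken with $B$ irreducible and $(X,B)$ plt, so that every available simplification either lowers the volume or leaves it unchanged), I would restrict to a component $S$ of $B$. Adjunction gives $(K_X+B)|_S=K_S+\mathrm{Diff}_S(B-S)$, where $(S,\mathrm{Diff}_S(B-S))$ is an $(n-1)$-dimensional lc pair with standard coefficients, $K_S+\mathrm{Diff}_S$ is ample, and $(K_X+B)^{n-1}\cdot S=\vol(K_S+\mathrm{Diff}_S)$. The coefficients $1-\frac1b$ of the different are precisely the data optimized by Sylvester's sequence: the minimal positive value of such a lower-dimensional volume is driven toward the Sylvester configuration, the source of the doubly exponential constant. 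In dimension $2$ this is transparent, where $S$ is a curve $C$ and $(K_X+B)\cdot C=\deg(K_C+\mathrm{Diff})=-2+\sum_i(1-\frac1{m_i})$, whose minimal positive value is $\frac{1}{42}$, attained at $(m_i)=(2,3,7)=(s_0,s_1,s_2)$.

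The lower bound then rests on two coupled inequalities. First, the restricted degree $(K_X+B)^{n-1}\cdot S=\vol(K_S+\mathrm{Diff}_S)$ must be bounded below; by induction this is an $(n-1)$-dimensional minimal-volume problem for pairs with standard coefficients, whose answer $\frac{1}{(s_{n+1}-1)^{n-1}}$ is forced by Sylvester's sequence. Second, the transverse weight $a_{n+1}$ must be bounded above, and the right language here is that $(X,(1-\frac{1}{a_{n+1}})B)$ is a klt Calabi--Yau pair whose standard coefficient $1-\frac{1}{a_{n+1}}$ is close to $1$; thus the maximal allowed $a_{n+1}$ is controlled by the \emph{first gap of global log canonical thresholds} in the sense of Liu--Shokurov. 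Combining these with the factorization $\vol(K_X+B)=\frac{1}{a_{n+1}}(K_X+B)^{n-1}\cdot S$ bounds the volume below, and the joint optimum is realized by the Sylvester configuration with the weight $a_{n+1}$ of Theorem~\ref{ample}. In dimension $2$ this entire program is carried out by Liu--Shokurov \cite[Theorem~1.4]{LS} --- the one case where the conjecture is a theorem --- giving $\vol(K_X+B)\geq\frac{1}{42\cdot 11}=\frac{1}{462}$.

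The hardest step, and the reason the statement remains conjectural for $n\geq 3$, is the coupling between these two inequalities. Note that the first-gap bound alone is not sharp: it permits a transverse weight strictly larger than the value in Theorem~\ref{ample} (for $n=2$ it allows $a_{n+1}$ up to the minimal-mld denominator $13$ rather than $11$), and a larger weight with the same minimal restricted degree would yield a smaller volume. So the true difficulty is an effective statement that the maximal transverse weight and the minimal restricted degree cannot be achieved simultaneously beyond the single Sylvester configuration. Establishing this requires (i) reducing an arbitrary minimizer to the proportional, weighted-hypersurface shape $K_X+B=\frac{1}{a_{n+1}}B$ so that the volume genuinely factors --- supplied in dimension $2$ by the surface classification but unavailable in general --- and (ii) a higher-dimensional analogue of the Liu--Shokurov gap analysis linking the first lc-threshold gap to the top self-intersection. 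I therefore expect a complete proof only in dimension $2$, with the general case contingent on these two classification-and-optimization inputs.
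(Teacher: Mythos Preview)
The statement you are asked to prove is a \emph{conjecture} in the paper, not a theorem: the paper offers no proof, and indeed states explicitly that the result is known only for $n=2$ (by Liu--Shokurov) and is open for $n\geq 3$. There is therefore nothing to compare your proposal against. Your write-up is not a proof and does not pretend to be one; you say yourself in the final paragraph that you ``expect a complete proof only in dimension $2$.'' As an honest heuristic discussion of why the example should be optimal, your outline is reasonable, and your observation that the first-gap bound on the transverse weight ($13$ in dimension $2$) overshoots the actual value $a_{n+1}=11$ is a genuine obstruction to the naive factorization strategy.

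That said, several of the intermediate steps you present as routine reductions are themselves open or at least nontrivial. The passage ``arguing that the extremal pair may be taken with $B$ irreducible and $(X,B)$ plt'' is not justified: dropping components of $B$ can destroy ampleness of $K_X+B$, and there is no a~priori reason a minimizer is plt. The inductive bound on the restricted volume $\vol(K_S+\mathrm{Diff}_S)\geq 1/(s_{n+1}-1)^{n-1}$ is precisely Koll\'ar's conjecture (Remark~\ref{kollarexample}), open for $n\geq 3$. And the crucial step~(i) of your last paragraph---reducing an arbitrary minimizer to the proportional shape $K_X+B\equiv\frac{1}{a}B$ so that a ``transverse weight'' even makes sense---has no known mechanism in higher dimensions. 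So your proposal is best read as an informed explanation of \emph{why} the conjecture is hard, not as progress toward proving it; that is consistent with the paper, which leaves the statement as a conjecture.
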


We know that there is some
positive lower bound for the volume in each dimension,
by Hacon-M\textsuperscript{c}Kernan-Xu \cite[Theorem 1.6]{HMXbounded}.

In dimension 2, our example is 
$X_{42}\subset \P^3(21,14,6,11)$ and $B=\{x_3=0\}\cap X$,
with $\vol(K_X+B)=1/462
\doteq 2.2\times 10^{-3}$. As discussed in the introduction,
this is the smallest possible volume for a projective lc pair $(X,B)$
of dimension 2 with $B$ a nonzero reduced divisor and $K_X+B$ ample,
by J.~Liu and V.~Shokurov \cite[Theorem 1.4]{LS}.
This example was found by V.~Alexeev and W.~Liu,
without the description as a hypersurface \cite[Theorem 1.4]{AL}.

In dimension 3,
our example is 
$$X_{1806}\subset \P^4(903,
602,258,42,431).$$
For $B=\{x_4=0\}\cap X$, we have $\vol(K_X+B)\doteq 7.1\times 10^{-10}$.
In dimension 4, our example is
$$X_{3263442}\subset \P^5(1631721,1087814,466206,75894,1806,
815861).$$
For $B=\{x_5=0\}\cap X$, we have $\vol(K_X+B)
\doteq 3.5\times 10^{-26}$.

\begin{remark}
\label{kollarexample}
In the broader class
of log canonical pairs $(X,B)$ such that $B$ has standard
coefficients with $\lfloor B\rfloor\neq 0$ and $K_X+B$ ample,
Koll\'ar conjectured the example of smallest volume
\cite[Example 5.3.1]{Kollarlog}: for general hyperplanes
$H_0,\ldots,H_{n+1}$ in $\P^n$, let
$$(X,B)=(\P^n,\frac{1}{2}H_0+\frac{2}{3}H_1+\cdots
+\frac{s_n-1}{s_n}H_n+H_{n+1}).$$
Here $K_X+B$ has volume $1/(s_{n+1}-1)^n$, which is asymptotic
to $1/s_n^{2n}$. In dimension 2, Koll\'ar showed that
this example is indeed optimal with these properties,
with volume $1/1764$ \cite[Remark 6.2.1]{Kollarlog}.

In high dimensions, Koll\'ar's pair has about $1/4$ of the volume
in Theorem \ref{ample}, which is extremely close for such small
numbers. That is some evidence for the optimality of Theorem
\ref{ample}, in the narrower setting of reduced divisors.
\end{remark}

\begin{proof}
(Theorem \ref{ample})
The weight $a_{n+1}$ is odd,
as we showed in \cite[proof of Theorem 2.1]{Totaroklt} (since our example
with $B=0$ included the same weight $a_{n+1}$ as here).
For the reader's convenience, here is the argument:
$s_n$ is $7\pmod{8}$ if $n\geq 2$ is even
and $3\pmod{8}$ if $n\geq 3$ is odd. This is immediate
by induction from the recurrence $s_{n+1}=s_n(s_n-1)+1$.
It follows that $s_n^2-s_n+2$ is $4\pmod{8}$
if $n\geq 2$ is even, and that $s_n^2-3s_n+4$
is $4\pmod{8}$ if $n\geq 3$ is odd. So $a_{n+1}$ is odd
in both cases.

Next, we show that the weighted projective
space $Y=\P^{n+1}(a_0,\ldots,a_{n+1})$ is well-formed.
That is, we have to show that $\gcd(a_0,\ldots,\widehat{a_j},
\ldots,a_{n+1})=1$ for each $j$.
We have $s_{n+1}-1=s_0\cdots s_n$
with $s_0,\ldots,s_n$ pairwise coprime, which implies that
$\gcd(a_0,\ldots,a_n)=1$. For the rest, it suffices to show
that $\gcd(s_{n+1}-1,a_{n+1})=1$. Here $s_{n+1}-1=s_n(s_n-1)$,
so it suffices to show that $\gcd(s_n-1,a_{n+1})=1$ and
$\gcd(s_n,a_{n+1})=1$. The first statement was shown
in \cite[proof of Theorem 2.1]{Totaroklt}. For the second,
if a prime number $p$ divides $s_n$ and $a_{n+1}$, then $p>2$
since $a_{n+1}$ is odd. If $n$ is even, it follows
that $a_{n+1}=\frac{1}{4}(s_n^2-s_n+2)\equiv \frac{1}{2}\pmod {p}$,
not 0, which is a contradiction. If $n$ is odd, we have
$a_{n+1}=\frac{1}{4}(s_n^2-3s_n+4)\equiv 1\pmod {p}$, not 0,
which is a contradiction. That completes the proof
that $Y$ is well-formed.

From the equation for $X$, the only coordinate linear space of $Y$
contained in $X$ is the point $p \coloneqq [0,\ldots,0,1]$. Since
that has codimension at least 2 in $X$, $X$ is well-formed.
Also, $X$ is quasi-smooth outside $p$, so it has only quotient
singularities outside $p$, and so $X$ is klt outside $p$.
At the point $p$, in coordinates $x_{n+1}=1$, $X$ is defined
by the equation
$$0=x_0^2+x_1^3+\cdots+x_n^{s_n}+x_1\cdots x_n$$
for $n$ even, resp.\
$$0=x_0^2+x_1^3+\cdots+x_n^{s_n}+x_1\cdots x_{n-1}x_n^2$$
for $n$ odd (in $A^{n+1}/\mu_{a_{n+1}}$). The same hypersurfaces
in $A^{n+1}$ appeared in our klt variety of conjecturally minimal volume,
and we showed that these hypersurfaces in $A^{n+1}$
have canonical singularities (hence are klt)
\cite[proof of Theorem 2.1]{Totaroklt}. The klt, plt, and lc
properties are preserved upon dividing
by a finite group action that is free in codimension 1
\cite[Corollary 2.43]{Kollarsing}. Therefore,
$X$ is klt.

The divisor $B=\{x_{n+1}=0\}\cap X$ is quasi-smooth and misses
the point $p$. So the pair $(X,B)$ is \'etale-locally (near each point
of $B$) the quotient of $(A^n,A^{n-1})$ by a finite group action,
free in codimension 1.
Since $X$ is klt outside $B$, it follows
that the pair $(X,B)$ is plt (hence lc).

Let $d=s_{n+1}-1$ be the degree of the hypersurface $X$.
Since $X$ is well-formed and normal, we have
\begin{align*}
K_X&=O_X(d-\sum a_j)\\
&=O_X\bigg( (s_{n+1}-1)\bigg(1-\frac{1}{s_0}-\cdots-\frac{1}{s_n}\bigg)
-a_{n+1}\bigg)\\
&=O_X(1-a_{n+1}).
\end{align*}
The divisor $B$ on $X$ is linearly equivalent to $O_X(a_{n+1})$,
and so $K_X+B\sim O_X(1)$. It follows that
\begin{align*}
\vol(K_X+B)&=\vol(O_X(1))\\
&=\frac{d}{a_0\cdots a_{n+1}}\\
&=\frac{(s_{n+1}-1)s_0\cdots s_n}{(s_{n+1}-1)^{n+1}a_{n+1}}\\
&=\frac{1}{(s_{n+1}-1)^{n-1}a_{n+1}}.
\end{align*}
Here $s_{n+1}$ is asymptotic to $s_n^2$ (with error term on the order
of $s_n$) as $n$ goes to infinity,
and $a_{n+1}\sim s_n^2/4$; so the volume of $K_X+B$ is asymptotic
to $4/s_n^{2n}$.
\end{proof}

\section{Esser's klt Calabi--Yau variety with small mld}
\label{esserintro}

Esser constructed
a klt Calabi--Yau variety which conjecturally has the smallest mld
(roughly $1/2^{2^n}$) in each dimension $n$
\cite[Conjecture 4.4]{Essermld}. (For example,
this variety has mld $1/13$ in dimension 2, $1/311$
in dimension 3, and mld $1/677785$ in dimension 4.)
We know that there is some positive lower bound for this problem
in each dimension,
by Hacon-M\textsuperscript{c}Kernan-Xu \cite[Theorem 1.5]{HMXACC}.

In version 1 of \cite{Essermld} on the arXiv,
the example was worked out completely only in dimensions
at most 18. In this paper, we prove the desired properties
of Esser's example in all dimensions, in particular
confirming Esser's conjectured value for its mld
(Theorem \ref{kltvariety}). By Lemma \ref{constant},
this mld is within a constant factor of the conjecturally smallest mld
in the broader setting of klt Calabi--Yau pairs with standard
coefficients.

We were led to this analysis by constructing a related example
among pairs, although in this paper we will prove
the properties of Esser's example first.
The example among pairs (Theorem \ref{pairmld}) is a klt Calabi--Yau
pair $(X,(1-\frac{1}{m})S)$ of each dimension $n\geq 2$ with $S$
an irreducible divisor. The number $1/m$ is the same as the mld
of Esser's example, and in fact $(X,(1-\frac{1}{m})S)$
is crepant-birational to Esser's Calabi--Yau variety
$V/\mu_m$ (Lemma \ref{crepant}).
(That is, $S$ is the divisor that shows that $V/\mu_m$
has mld $1/m$.)

In each dimension $n\geq 2$,
Esser's example is the quotient
of a hypersurface $V$ in a weighted projective
space $\P^{n+1}(a_0,\ldots,a_{n+1})$ by an action of a cyclic
group \cite[section 4]{Essermld}.
(The order of the cyclic group should be the number $m$
defined below, but that will be proved later,
in Theorem \ref{kltvariety}. We define the $\mu_m$-action
explicitly in the proof of Lemma \ref{crepant}.)
In odd dimension $n=2r+1$, the equation of $V$ has the form
$$0=x_0^{b_0}x_{2r+2}+x_1^{b_1}x_{2r+1}+\cdots+x_r^{b_r}x_{r+2}
+x_{r+1}^{b_{r+1}}x_r
+\cdots+x_{2r+1}^{b_{2r+1}}x_0
+x_{2r+2}^{v_{2r+1}}x_{r+1},$$
for exponents $b_j$ and $v_{2r+1}$ defined below.
In even dimension $n=2r$,
the equation of $V$ has the form
$$0=x_0^{b_0}+x_1^{b_1}x_{2r+1}+\cdots+x_r^{b_r}x_{r+2}
+x_{r+1}^{b_{r+1}}x_r
+\cdots+x_{2r}^{b_{2r}}x_1
+x_{2r+1}^{v_{2r}}x_{r+1}.$$
To define the exponents, we'll use the 
following notation. For short, given 
integers $b_{i_1},\ldots,b_{i_k}$,
write $B_{i_1\cdots i_k}$ for the alternating sum
$$B_{i_1\cdots i_k} \coloneqq b_{i_1}\cdots b_{i_k}-b_{i_1}\cdots b_{i_{k-1}}
+\cdots+(-1)^{k-1}b_{i_1}+(-1)^k.$$
We note for future reference the following
symmetry property for alternating sums of $B$'s:

\begin{lemma}
    \label{Bsymmetry}
    For any integers $b_{i_1},\ldots,b_{i_k}$,
    $$B_{i_1\cdots i_k} - B_{i_2 \cdots i_k} + \cdots + 
    (-1)^{k-1}B_{i_k} = B_{i_k \cdots i_1} - 
    B_{i_{k-1} \cdots i_1} + \cdots + (-1)^{k-1} B_{i_1}.$$
\end{lemma}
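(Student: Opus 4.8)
The plan is to recast the asserted identity as a reversal-symmetry statement about a single auxiliary quantity, and then prove it by writing the $B$'s in a manifestly reversal-symmetric closed form. For a string of indices $a_1,\ldots,a_m$ set
\[
F(a_1,\ldots,a_m)\coloneqq\sum_{j=1}^{m}(-1)^{j-1}B_{a_ja_{j+1}\cdots a_m},
\]
the alternating sum of the $B$'s taken over all suffixes of the string. The left-hand side of the lemma is exactly $F(i_1,\ldots,i_k)$, while the right-hand side is $F(i_k,i_{k-1},\ldots,i_1)$, since the suffixes of the reversed string $i_k,i_{k-1},\ldots,i_1$ are precisely $i_k\cdots i_1$, $i_{k-1}\cdots i_1$, \ldots, $i_1$, with the same alternating signs. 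So it suffices to show that $F$ is unchanged when the order of its arguments is reversed.

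The next step is to expand $B$. Directly from the definition (equivalently, from the one-step recursion $B_{a_1\cdots a_\ell}=b_{a_1}\cdots b_{a_\ell}-B_{a_1\cdots a_{\ell-1}}$ with the convention $B_{\emptyset}=1$, which is immediate from the definition), one gets $B_{a_j\cdots a_m}=\sum_{q=j-1}^{m}(-1)^{m-q}\,b_{a_j}b_{a_{j+1}}\cdots b_{a_q}$, where the term $q=j-1$ is the empty product $1$. Substituting this into $F$, peeling off the empty-product terms (each of which contributes $(-1)^m$, for a total of $m(-1)^m$), and reindexing the remaining terms as a sum over pairs $1\le j\le q\le m$, one obtains the closed form
\[
F(a_1,\ldots,a_m)=m(-1)^m+\sum_{1\le j\le q\le m}(-1)^{(j-1)+(m-q)}\prod_{i=j}^{q}b_{a_i}.
\]
Thus, apart from the additive constant $m(-1)^m$, $F$ is a signed sum over all contiguous blocks of indices, the block $[j,q]$ carrying the sign $(-1)^{r+s}$ where $r$ and $s$ are the numbers of indices lying strictly to the left and strictly to the right of the block.

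With this formula the symmetry is transparent. Reversing $(a_1,\ldots,a_m)\mapsto(a_m,\ldots,a_1)$ induces the bijection $[j,q]\mapsto[m+1-q,\,m+1-j]$ on the set of contiguous blocks; the monomial $\prod_{i\in[j,q]}b_{a_i}$ is unaffected (it runs over the same multiset of variables), and its sign is unaffected because reversal merely interchanges the counts $r$ and $s$. Since $m(-1)^m$ is visibly reversal-invariant, $F(a_1,\ldots,a_m)=F(a_m,\ldots,a_1)$, which is the identity of the lemma. As an alternative endgame, once one records the two ``peel one letter'' recursions $F(a_1,\ldots,a_m)=B_{a_1\cdots a_m}-F(a_2,\ldots,a_m)$ and $F(a_1,\ldots,a_m)=\sum_{j=1}^{m}(-1)^{j-1}b_{a_j}\cdots b_{a_m}-F(a_1,\ldots,a_{m-1})-(-1)^{m-1}$, together with the expansion $B_{a_m\cdots a_1}=\sum_{j=1}^{m}(-1)^{j-1}b_{a_j}\cdots b_{a_m}-(-1)^{m-1}$, the same identity drops out by an easy induction on $m$. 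There is no real obstacle in either route; the one thing to be careful about is the sign and index bookkeeping, and in particular tracking the empty-product constant term correctly.
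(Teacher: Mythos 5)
Your argument is correct and complete: both the closed form $F(a_1,\ldots,a_m)=m(-1)^m+\sum_{1\le j\le q\le m}(-1)^{(j-1)+(m-q)}\prod_{i=j}^{q}b_{a_i}$ and the reversal bijection on contiguous blocks check out, as does the inductive alternative. The paper states Lemma \ref{Bsymmetry} without proof, so there is no argument to compare against; your block-sum expansion is a clean way to make the symmetry manifest.
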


Esser defines the exponents $b_0,\ldots,b_{n}$
as follows, for $n=2r+1$ or $n=2r$,
with $r\geq 1$. For $0\leq i\leq r$, let $b_i \coloneqq s_i$, the Sylvester number.
Then define all but one of the remaining exponents inductively by
\begin{align*}
b_{r+i}& \coloneqq 1+(b_{r+1-i}-1)^2B_{r+1,r,r+2,r-1,\ldots,r-1+i,r+2-i}\\
&=1+b_0\cdots b_{r-i}(b_{r+1-i}-1)B_{r+1,r,r+2,r-1,\ldots,r-1+i,r+2-i}
\end{align*}
for $1\leq i\leq r+1$ when $n=2r+1$, 
and for $1\leq i\leq r$ when $n=2r$.
(A symbol $B$ with empty subscript
is understood to be 1, and so $b_{r+1}=1+(b_r-1)^2$.)
Finally, the last exponent is given by
$$\begin{cases} v_{2r+1} \coloneqq B_{r+1,r,r+2,r-1,\ldots,2r+1,0}-B_{r,r+2,r-1,\ldots,2r+1,0}+\cdots-B_0 &\text{if }n=2r+1,\\
v_{2r} \coloneqq 2(B_{r+1,r,r+2,r-1,\ldots,2r,1}-B_{r,r+2,r-1,\ldots,2r,1}+\cdots-B_1)+1 &\text{if }n=2r.
\end{cases}$$

The weights $a_0,\ldots,a_{n+1}$ and degree $D$ of $V$
are determined uniquely by the equation of $V$, given the requirement that
$\gcd(a_0,\ldots,a_{n+1})=1$. Esser shows
that $V$ is a well-formed quasi-smooth Calabi--Yau hypersurface;
we write out the details in section \ref{kltvarietysection}.
In version 1 of \cite[Section 4]{Essermld} on the arXiv,
Esser {\it conjectured }that the last
weight $a_{2r+2}$ is equal to 1, and he verified this in dimensions
at most 18. Using that assumption,
he shows that there is an action of the cyclic group $\mu_m$ on $V$,
where $m = m_n$ is given by
$$m \coloneqq \begin{cases} m_{2r+1} = B_{0,2r+1,1,2r,\ldots,r,r+1} &\text{if }n=2r+1,\\
m_{2r} = B_{1,2r,2,2r-1,\ldots,r,r+1} &\text{if }n=2r.
\end{cases}$$
The number $m=m_n$ is doubly exponential in the dimension $n$; 
in particular, $m_n>2^{2^n}$ for $n>2$.
Esser's conjecture would also imply that the degree $D$ of $V$
is given by $D=u_{2r+1}$ if $n=2r+1$ and $D=2u_{2r}$ if $n=2r$,
where
$$u \coloneqq \begin{cases}
u_{2r+1} = B_{r+1,r,r+2,r-1,\ldots,2r+1,0}&\text{if }n=2r+1,\\
u_{2r} = B_{r+1,r,r+2,r-1,\ldots,2r,1} &\text{if }n=2r.
\end{cases}$$
(A connection between dimensions $2r$
and $2r+1$ is that the exponent $b_{2r+1}$ for $n=2r+1$
satisfies $b_{2r+1}=u_{2r}+1$.)

Esser's conjecture would imply that $V/\mu_m$ is a klt Calabi--Yau variety
with mld $1/m$; so this was initially proved
in dimensions $n\leq 18$. We prove the conjecture in all dimensions
in Theorem \ref{kltvariety}, using the product formulas proved
in the next section.

In dimension 2, Esser's hypersurface is:
\begin{align*}
V&=V_{22}\subset \P^3(11,7,3,1),\\
0&=x_0^2+x_1^3x_3+x_2^5x_1+x_3^{19}x_2.
\end{align*}
Here $V/\mu_{13}$ is a klt Calabi--Yau surface of mld $1/13$,
which is the smallest possible \cite[Proposition 6.1]{ETWCalabi}.
In dimension 3, we have:
\begin{align*}
V&=V_{191}\subset \P^4(95,61,26,8,1),\\
0&=x_0^2x_4+x_1^3x_3+x_2^5x_1+x_3^{12}x_0+x_4^{165}x_2,
\end{align*}
with an action of the cyclic group of order 311. In dimension 4, we have:
\begin{align*}
V&=V_{925594}\subset \P^5(462797,308531,132129,21445,691,1),\\
0&=x_0^2+x_1^3x_5+x_2^7x_4+x_3^{37}x_2+x_4^{893}x_1+x_5^{904149}x_3,
\end{align*}
with an action of the cyclic group of order 677785.

\section{Product formulas for the small-mld example}

Here we prove some product formulas which imply
the desired properties of the klt Calabi--Yau variety with small mld
\cite[equation (6)]{Essermld}.
The formulas are also useful
for the klt Calabi--Yau variety with large index,
because the equations for the two varieties are similar.

Fix a positive integer $r$. Let $b_0,\ldots,b_{2r+1}$ be the numbers
defined in section \ref{esserintro}. (These are all but the last
exponent of Esser's hypersurface $V$ of dimension $2r+1$.) In the notation
of that section, define the following related numbers for $0\leq k\leq r+1$:
\begin{align*}
g_k& \coloneqq B_{k,2r+1-k,\ldots,r,r+1}\\
t_k& \coloneqq B_{r+1,r,\ldots,2r+1-k,k}\\
w_k& \coloneqq (s_{k}-1)[B_{r+1,r,\ldots,2r+1-k,k}-B_{r,\ldots,2r+1-k,k}
+\cdots-B_{k}+1]-1.
\end{align*}
\begin{proposition}
\label{product}
For $0\leq k\leq r+1$, we have
$$(s_k-1)g_kt_k-1=b_{k}\cdots b_{2r+1-k}w_k.$$
\end{proposition}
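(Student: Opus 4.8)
The plan is to prove the identity by downward induction on $k$, from $k=r+1$ down to $k=0$, after rewriting it in a form that exposes a clean recursion. Write $P_k:=b_kb_{k+1}\cdots b_{2r+1-k}$ for the product on the right-hand side, with $P_{r+1}:=1$ (empty product), and let $A_k$ denote the bracketed alternating sum in the definition of $w_k$, so that $w_k=(s_k-1)A_k-1$; explicitly $A_k$ is the alternating sum of the $B$'s obtained by successively deleting the first index from the string $(r+1,r,\ldots,2r+1-k,k)$, continued all the way down to the empty string (whose $B$ is $1$). Put $\phi_k:=(s_k-1)g_kt_k-1$ and $\psi_k:=P_kw_k$; the claim is $\phi_k=\psi_k$ for $0\le k\le r+1$. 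Throughout I will use two elementary identities for alternating sums, both immediate from the definition of $B$: prepending an index, $B_{j,i_1,\ldots,i_\ell}=b_jB_{i_1\cdots i_\ell}+(-1)^{\ell+1}$, and appending one, $B_{i_1,\ldots,i_\ell,j}=(b_{i_1}\cdots b_{i_\ell})b_j-B_{i_1\cdots i_\ell}$.

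The first substantive step is to record how $g_k,t_k,A_k,P_k$ change as $k$ decreases. The index string of $t_k$ is the zigzag $(r+1,r,r+2,r-1,\ldots,2r+1-k,k)$ and the index string of $g_k$ is its reverse; in particular both involve exactly the $b_j$ with $k\le j\le 2r+1-k$, which is why $P_k$ is their common leading product. Since the string of $t_k$ is the string of $t_{k+1}$ with $2r+1-k$ and then $k$ appended, two uses of the append identity give
\[
t_k=(s_k-1)\,b_{2r+1-k}\,P_{k+1}+t_{k+1},\qquad P_k=s_k\,b_{2r+1-k}\,P_{k+1};
\]
since the string of $g_k$ is the string of $g_{k+1}$ with $2r+1-k$ and then $k$ prepended, two uses of the prepend identity give
\[
g_k=s_k\,(b_{2r+1-k}\,g_{k+1}-1)+1.
\]
For $A_k$, expand each term $B_{(\,\cdot\,,\,2r+1-k,\,k)}$ by the append identity: the pieces carrying no factor $b_{2r+1-k}$ reassemble (with signs) into $A_{k+1}$, while the pieces carrying the factor $b_{2r+1-k}(s_k-1)$ reassemble into $b_{2r+1-k}(s_k-1)g_{k+1}$ --- here one uses that the alternating sum $b_{c_1}\cdots b_{c_L}-b_{c_2}\cdots b_{c_L}+\cdots$ of products of trailing segments of the string $(c_1,\ldots,c_L)$ of $t_{k+1}$ is precisely $B$ evaluated on the reversed string $(c_L,\ldots,c_1)$, hence equals $g_{k+1}$ (this is immediate from the definition of $B$; compare Lemma \ref{Bsymmetry}). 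Tracking the two leftover terms from the length-$1$ and length-$0$ suffixes, whose signs are fixed by the even parity of the length $2(r-k)$ of $t_{k+1}$'s string, one obtains
\[
A_k=b_{2r+1-k}\,(s_k-1)\,g_{k+1}+A_{k+1}-s_k+2.
\]
Finally, specializing the defining recursion for the exponents $b_{r+i}$ to the value $r+i=2r+1-k$ identifies the alternating sum appearing there with $t_{k+1}$, which yields the crucial relation
\[
b_{2r+1-k}=1+(s_k-1)^2\,t_{k+1}.
\]

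With these recursions in hand, the base case $k=r+1$ is immediate: $t_{r+1}=g_{r+1}=A_{r+1}=1$ and $P_{r+1}=1$, so $\phi_{r+1}=\psi_{r+1}=s_{r+1}-2$. For the inductive step, substitute the four displayed recursions into $\phi_k$ and $\psi_k$ and subtract. The terms of top degree in $b_{2r+1-k}$ cancel outright; the cross term $s_k(s_k-1)\,b_{2r+1-k}\,(g_{k+1}t_{k+1}-P_{k+1}A_{k+1})$ is disposed of by the inductive hypothesis, rewritten via $s_{k+1}-1=s_k(s_k-1)$ as $s_k(s_k-1)(P_{k+1}A_{k+1}-g_{k+1}t_{k+1})=P_{k+1}-1$; the remaining term $b_{2r+1-k}-(s_k-1)^2t_{k+1}-1$ vanishes by the crucial relation; and what is left is $P_{k+1}\,b_{2r+1-k}$ times the elementary polynomial identity $-1-(s_k-1)^3+s_k(s_k-1)(s_k-2)+s_k=0$. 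Hence $\phi_k=\psi_k$, completing the induction.

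The step I expect to be the main obstacle is not the final cancellation, which is mechanical once the pieces are assembled, but the derivation of the recursion for $A_k$ (equivalently for $w_k$): one must correctly recognize the reversed-string sum as $g_{k+1}$ and, more delicately, pin down the signs and the two inhomogeneous leftover terms, which depend on the parities of the lengths of the zigzag strings. A secondary point needing care is that the relation $b_{2r+1-k}=1+(s_k-1)^2t_{k+1}$ and the identifications $b_j=s_j$ for $j\le r$ are available only for $0\le k\le r$, so the value $k=r+1$ must be treated separately, as the base case above.
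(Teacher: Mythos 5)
Your proof is correct and follows essentially the same strategy as the paper: descending induction on $k$ from the base case $k=r+1$, driven by the recursions for $g_k$, $t_k$, and $b_{2r+1-k}-1=(s_k-1)^2t_{k+1}$, with the same final algebraic cancellation. The only difference is organizational — you package the $w$-recursion as an identity for the bracketed sum $A_k$ in terms of $g_{k+1}$ and $A_{k+1}$, where the paper instead isolates Lemma \ref{qk} and the equivalent identity $b_kw_k-w_{k+1}=(s_k-1)^2g_k$ — and both versions check out.
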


For $k=0$, we'll see in section 
\ref{kltvarietysection} that this proposition 
directly implies Esser's conjecture for the small-mld example
of dimension $n=2r+1$; in the notation
of section \ref{esserintro}, the $k = 0$ 
statement reads $m_{2r+1}u_{2r+1}-1
=b_0\cdots b_{2r+1}v_{2r+1}$. Likewise, for $k=1$, the proposition
reads $2m_{2r}u_{2r}-1=b_1\cdots b_{2r}v_{2r}$ in the notation
of section \ref{esserintro}; this will
imply Esser's conjecture for dimension $2r$. 
Generalizing these product formulas
to Proposition \ref{product} makes an inductive proof possible.

\begin{proof}
(Proposition \ref{product})
We prove this by descending induction on $0\leq k\leq r+1$.
For $k=r+1$, both sides of the equation are equal
to $(s_{r+1}-1)-1$. Next, assume that $0\leq k\leq r$
and the equation holds for $k+1$,
meaning that
\begin{equation}
\label{kplus1}
(s_{k+1}-1)g_{k+1}t_{k+1}-1
=b_{k+1}\cdots b_{2r-k}w_{k+1}.
\end{equation}
We will prove it for $k$.

We prove the following lemma at the same time as Proposition \ref{product}.

\begin{lemma}
\label{qk}
For $0\leq k\leq r$,
$$g_k-s_kg_{k+1}=(s_k-1)b_{k+1}\cdots b_{2r-k}w_{k+1}.$$
\end{lemma}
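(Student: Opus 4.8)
The plan is to derive the identity directly from the inductive hypothesis \eqref{kplus1} (Proposition \ref{product} at $k+1$), with no separate induction on Lemma \ref{qk} itself, using only two elementary manipulations: a ``peeling'' recursion for the alternating sums $B$, and Sylvester's recursion. First I would record the recursion obtained by splitting off the leading index: for any indices $i_1,\ldots,i_\ell$,
$$B_{i_1 i_2\cdots i_\ell}=b_{i_1}\,B_{i_2\cdots i_\ell}+(-1)^{\ell},$$
which is immediate from the definition $B_{i_1\cdots i_\ell}=\sum_{j=0}^{\ell}(-1)^{\ell-j}b_{i_1}\cdots b_{i_j}$. The subscript of $g_k$ is $k$, then $2r+1-k$, then the subscript of $g_{k+1}$ (which has length $2r-2k$), so applying the recursion twice---peeling off $k$ (length $2r+2-2k$, sign $+1$), then $2r+1-k$ (length $2r+1-2k$, sign $-1$)---gives
$$g_k=b_k\,b_{2r+1-k}\,g_{k+1}-b_k+1=s_k\,b_{2r+1-k}\,g_{k+1}-s_k+1,$$
using $b_k=s_k$ for $k\le r$. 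Hence $g_k-s_k g_{k+1}=s_k(b_{2r+1-k}-1)g_{k+1}-(s_k-1)$.

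Next I would rewrite $b_{2r+1-k}-1$ in terms of $t_{k+1}$. Setting $i=r+1-k$ so that $2r+1-k=r+i$, Esser's defining formula for $b_{r+i}$ reads $b_{r+i}=1+(b_k-1)^2\,B_{r+1,r,r+2,r-1,\ldots}$, and I claim the displayed $B$-subscript is exactly the one defining $t_{k+1}=B_{r+1,r,\ldots,2r+1-(k+1),k+1}$. This is the one step requiring genuine care: one must verify that the interleaved index pattern $r+1,r,r+2,r-1,\ldots$, truncated as prescribed in the definition of $b_{r+i}$, terminates precisely at the pair $2r-k,\,k+1$, and one should separately note the base case $k=r$, where both $t_{r+1}$ and the relevant $B$ are the empty symbol $1$ (consistent with $b_{r+1}=1+(b_r-1)^2$). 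Granting this, $b_{2r+1-k}-1=(s_k-1)^2\,t_{k+1}$, and substituting yields
$$g_k-s_k g_{k+1}=(s_k-1)\big[\,s_k(s_k-1)\,t_{k+1}g_{k+1}-1\,\big].$$

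Finally, Sylvester's recursion $s_{k+1}=s_k(s_k-1)+1$ turns the bracket into $(s_{k+1}-1)\,g_{k+1}t_{k+1}-1$, which is precisely the left-hand side of \eqref{kplus1}; invoking the inductive hypothesis equates it with $b_{k+1}\cdots b_{2r-k}\,w_{k+1}$, giving $g_k-s_k g_{k+1}=(s_k-1)\,b_{k+1}\cdots b_{2r-k}\,w_{k+1}$ as claimed. The only real obstacle is the subscript bookkeeping in the second paragraph and the handling of the degenerate (empty-subscript) cases; everything else is a short formal computation, and the lemma at level $k$ depends only on Proposition \ref{product} at level $k+1$, not on the lemma at any other level.
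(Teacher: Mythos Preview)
Your proof is correct and follows essentially the same route as the paper's: both arguments expand $g_k$ via the two-step peeling $g_k=b_kb_{2r+1-k}g_{k+1}-(b_k-1)$, substitute $b_{2r+1-k}-1=(s_k-1)^2t_{k+1}$ from the definition of $b_{r+i}$, use $s_k(s_k-1)=s_{k+1}-1$, and then invoke Proposition~\ref{product} at level $k+1$. Your write-up is a bit more explicit about the peeling recursion and the subscript bookkeeping (including the empty-subscript edge case at $k=r$), but there is no substantive difference in strategy.
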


\begin{proof}
Given that Proposition \ref{product} holds for $k+1$, we prove this lemma
for $k$. By definition of $g_k$, we have
$$g_k=b_kb_{2r+1-k}g_{k+1}-(b_k-1).$$
So, noting that $b_k=s_k$ (the Sylvester number), we have
\begin{align*}
g_k-s_kg_{k+1}&=s_k(b_{2r+1-k}-1)g_{k+1}-(s_k-1).
\end{align*}
By definition of $b_{2r+1-k}$, we have $b_{2r+1-k}-1=(s_k-1)^2t_{k+1}$.
So 
\begin{align*}
g_k-s_kg_{k+1}&=s_k(s_k-1)^2g_{k+1}t_{k+1}-(s_k-1)\\
&=(s_k-1)[(s_{k+1}-1)g_{k+1}t_{k+1}-1]\\
&=(s_k-1)b_{k+1}\cdots b_{2r-k}w_{k+1},\\
\end{align*}
using that Proposition \ref{product} holds for $k+1$ (equation \ref{kplus1}).
That proves the lemma for $k$.
\end{proof}

We continue the proof of Proposition \ref{product} for $k$,
using that it holds for $k+1$. By definition of $t_k$, we have
$$t_k-t_{k+1}=(s_k-1)b_{k+1}\cdots b_{2r+1-k}.$$
Using Lemma \ref{qk} for the given number $k$, it follows that
\begin{multline*}
(s_k-1)g_kt_k-1=(s_k-1)[s_kg_{k+1}+(s_k-1)b_{k+1}\cdots b_{2r-k}w_{k+1}]\\
\cdot [t_{k+1}+(s_k-1)b_{k+1}\cdots b_{2r+1-k}]-1\\
=(s_{k+1}-1)g_{k+1}t_{k+1}-1+(s_k-1)^2b_{k}\cdots b_{2r+1-k}
g_{k+1}\\
+(s_k-1)^2b_{k+1}\cdots b_{2r-k}t_{k+1}w_{k+1}
+(s_k-1)^3(b_{k+1}\cdots b_{2r-k})^2b_{2r+1-k}w_{k+1}.
\end{multline*}
Since Proposition \ref{product} holds for $k+1$ (equation
\ref{kplus1}), it follows that
\begin{multline*}
(s_k-1)g_kt_k-1=b_{k+1}\cdots b_{2r-k}[(s_k-1)^2b_{k}b_{2r+1-k}g_{k+1}
+w_{k+1}+(s_k-1)^2t_{k+1}w_{k+1}\\
+(s_k-1)^3b_{k+1}\cdots b_{2r+1-k}w_{k+1}].
\end{multline*}
By definition, $b_{2r+1-k}=1+(s_k-1)^2t_{k+1}$. So we can combine
the second and third terms in the bracket above
into a multiple of $b_{2r+1-k}$:
\begin{multline*}
(s_k-1)g_kt_k-1=b_{k+1}\cdots b_{2r+1-k}[(s_k-1)^2b_{k}g_{k+1}+w_{k+1}\\
+(s_k-1)^3b_{k+1}\cdots b_{2r-k}w_{k+1}].
\end{multline*}
Therefore, to prove Proposition \ref{product} for $k$
(completing the induction), it suffices to show that:
\begin{equation}
\label{last}
b_kw_k=(s_k-1)^2b_kg_{k+1}+w_{k+1}+(s_k-1)^3b_{k+1}\cdots b_{2r-k}w_{k+1}.
\end{equation}

By definition,
$$w_k=(s_{k}-1)[B_{r+1,r,\ldots,2r+1-k,k}-B_{r,\ldots,2r+1-k,k}
+\cdots-B_{k}+1]-1$$
and
$$w_{k+1}=s_k(s_{k}-1)[B_{r+1,r,\ldots,2r-k,k+1}-B_{r,\ldots,2r-k,k+1}
+\cdots-B_{k+1}+1]-1.$$
Therefore, subtracting term by term and using that $b_k=s_k$, we have
\begin{multline*}
b_kw_k-w_{k+1}=s_k(s_k-1)[(b_k-1)b_{2r+1-k}b_{k+1}b_{2r-k}\cdots b_rb_{r+1}\\
-(b_k-1)b_{2r+1-k}b_{k+1}b_{2r-k}\cdots b_r+\cdots +(b_k-1)b_{2r+1-k}
-(b_k-1)+1]-(s_k-1)
\end{multline*}
\begin{align*}
&=(s_k-1)^2[b_kb_{2r+1-k}b_{k+1}b_{2r-k}\cdots b_rb_{r+1}
-b_kb_{2r+1-k}\cdots b_r+\cdots +b_kb_{2r+1-k}-b_k+1]\\
&=(s_k-1)^2B_{k,2r+1-k,\ldots,r,r+1}\\
&=(s_k-1)^2g_k.
\end{align*}
So the left side of \eqref{last} is
$$b_kw_k=(s_k-1)^2g_k+w_{k+1}.$$
By Lemma \ref{qk} for the given number $k$, we can expand $g_k$ here,
giving that
$$b_kw_k=(s_k-1)^2b_kg_{k+1}+w_{k+1}+(s_k-1)^3b_{k+1}\cdots
b_{2r-k}w_{k+1}.$$
That proves equation \ref{last}. Thus Proposition \ref{product}
holds for $k$ given that it holds for $k+1$. The proposition
is proved.
\end{proof}

\section{Proof of the properties of Esser's example
in all dimensions}
\label{kltvarietysection}

Prior to this paper, the properties of Esser's klt 
Calabi--Yau variety with small mld
were known completely only
in dimensions at most 18. We now prove the desired properties
in all dimensions (Theorem \ref{kltvariety}),
using the product formulas in Proposition \ref{product}.

\begin{theorem}
\label{kltvariety}
In each dimension $n\geq 2$,
Esser's hypersurface $V$ defined in section \ref{esserintro}
has an action of the cyclic group $\mu_m$ (for the number
$m=m_n$ defined there) such that $V/\mu_m$
is a complex klt Calabi--Yau variety
with mld $1/m$. In particular, $m_n>2^{2^n}$ for $n>2$.
\end{theorem}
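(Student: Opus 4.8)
The plan is to verify Esser's conjecture from section \ref{esserintro} --- namely that the last weight of $V$ equals $1$ --- and then deduce the stated properties from it, together with the product formulas of Proposition \ref{product}. First I would treat the odd-dimensional case $n = 2r+1$. The weights $a_0, \dots, a_{2r+2}$ and degree $D$ are determined by the monomial equation of $V$ up to the normalization $\gcd(a_0,\dots,a_{2r+2}) = 1$; writing out the weighted-homogeneity conditions (each monomial has degree $D$) gives a linear system for the $a_i$, which can be solved in terms of the $b_j$ and $v_{2r+1}$. The claim to check is that, after clearing denominators, the resulting integer solution has $a_{2r+2} = 1$ and $D = u_{2r+1}$, with $a_i = D/g_i$-type expressions involving the alternating sums $B_{\cdots}$. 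The key input is the $k=0$ instance of Proposition \ref{product}, $m_{2r+1} u_{2r+1} - 1 = b_0 \cdots b_{2r+1} v_{2r+1}$, which is precisely the coprimality/normalization identity forcing $a_{2r+2} = 1$. Similarly, the even-dimensional case $n = 2r$ uses the $k=1$ instance $2 m_{2r} u_{2r} - 1 = b_1 \cdots b_{2r} v_{2r}$; the factor of $2$ there is why $D = 2 u_{2r}$ in even dimensions. One must also confirm well-formedness of $\P^{n+1}(a_0, \dots, a_{n+1})$ and of $V$; since $V$ is a Berglund-H\"ubsch-Krawitz chain-type hypersurface (a chain $x_0^{b_0} + x_1^{b_1}x_2 + \cdots$ possibly glued to a short loop through the last variable), quasi-smoothness is automatic for positive exponents not all $1$, and well-formedness follows from Proposition \ref{iano} once $D$ is checked to differ from every weight.

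Next I would establish that $V$ is Calabi--Yau, i.e.\ $K_V \sim 0$. Since $V$ is a well-formed quasi-smooth hypersurface of degree $D$ in $\P^{n+1}(a_0,\dots,a_{n+1})$, we have $K_V = O_V(D - \sum a_i)$, so it suffices to check $\sum_{i=0}^{n+1} a_i = D$. This is another linear identity among the $B_{\cdots}$ expressions; with the explicit formulas for the $a_i$ and $D$ in hand it should reduce to a telescoping alternating-sum manipulation, for which Lemma \ref{Bsymmetry} is the natural tool. Then I would exhibit the $\mu_m$-action: because the last weight is $1$, the variable $x_{n+1}$ is a coordinate of weight $1$, and scaling $x_{n+1}$ by an $m$-th root of unity (suitably combined with the $G_m$-action) preserves $V$; one checks this diagonal action has order exactly $m = m_n$ and is free in codimension $1$ on $V$ away from the fixed divisor $S = \{x_{n+1} = 0\} \cap V$. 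Quasi-smoothness of $V$ gives that $V$ has only klt (quotient) singularities, and klt is preserved under quotients by finite groups free in codimension $1$ (\cite[Corollary 2.43]{Kollarsing}), so $V/\mu_m$ is klt; it is Calabi--Yau since $K_V \sim 0$ descends.

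The heart of the theorem is the mld computation: $\mathrm{mld}(V/\mu_m) = 1/m$. The upper bound comes from the divisor $S$: over the image $\overline{S}$ of $S$ in $V/\mu_m$, the stabilizer acts on the normal direction with order $m$ (this is exactly what ``$a_{n+1} = 1$'' and the structure of the $\mu_m$-action arrange), so the quotient map $V \to V/\mu_m$ is ramified to order $m$ along $\overline{S}$, and the log discrepancy of $V/\mu_m$ along $\overline{S}$ is $1/m$; hence $\mathrm{mld}(V/\mu_m) \le 1/m$. For the lower bound one must show no divisor over $V/\mu_m$ has log discrepancy smaller than $1/m$. The cleanest route is to pull back to $V$: divisors over $V/\mu_m$ correspond to $\mu_m$-orbits of divisors over $V$, and log discrepancies scale by the ramification index; since $V$ itself is klt with only cyclic quotient singularities, one can bound $\mathrm{mld}(V)$ below (toric/Reid-Tai computation at each quotient singularity) and then multiply by the worst ramification index, which is $m$. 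I expect this lower bound to be the main obstacle --- it requires controlling the cyclic quotient singularities of $V$ precisely enough, and matching the constant, rather than just showing positivity. Finally, the asymptotic claim $m_n > 2^{2^n}$ for $n > 2$ follows from unwinding $m_n = B_{0,2r+1,1,2r,\dots}$ (resp.\ $B_{1,2r,\dots}$): the leading term is $b_0 b_1 \cdots b_n$ up to lower-order corrections, $b_i = s_i$ for $i \le r$ grows doubly exponentially via $s_j > 2^{2^{j-1}}$, and the later exponents $b_{r+i}$ are themselves at least $(s_{r+1-i}-1)^2$, so the product comfortably exceeds $2^{2^n}$; a short induction or crude estimate suffices.
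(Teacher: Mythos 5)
The first half of your plan---computing the weights and degree, verifying Esser's conjecture $a_{n+1}=1$ via the $k=0$ and $k=1$ cases of Proposition \ref{product}, checking well-formedness, and proving the Calabi--Yau condition by a telescoping identity---matches the paper's argument in substance; the ``linear system'' you describe is solved there via the inverse loop-matrix formula (Lemma \ref{inverse}), and the coprimality identity you single out is exactly what forces the last charge to equal $1/u_n$. One correction that turns out to matter: $V$ is a \emph{loop} (odd $n$) or $x_0^2$ plus a loop (even $n$), not a chain; the chain-shaped hypersurface is the pair model $X$ of Theorem \ref{pairmld}. The loop shape is what makes $\Aut_{\T}(V)$ cyclic and its action free in codimension $1$.

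The genuine gap is in the mld computation, which is the heart of the theorem. Your upper bound asserts that $\mu_m$ fixes the divisor $\{x_{n+1}=0\}\cap V$ and that $V\to V/\mu_m$ is ramified there to order $m$, so that the image of this divisor has log discrepancy $1/m$. This is false, and it contradicts the statement you are proving: the $\mu_m$-action on $V$ is free in codimension $1$ (this is precisely why $V/\mu_m$ is a Calabi--Yau \emph{variety} rather than a pair with boundary $(1-\frac{1}{m})\overline{S}$; it follows from the loop shape of the equation, \cite[Proposition 7.2]{ETWCalabi}). On $V$ the exponent of $x_{n+1}$ is the huge number $v_n$, not $m$, and the $\mu_m$-action scales the other coordinates while fixing $x_{n+1}$, so there is no order-$m$ ramification divisor. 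The divisor achieving log discrepancy $1/m$ is \emph{exceptional} over $V/\mu_m$: it appears on the crepant-birational model $(X,(1-\frac{1}{m})S)$ of Lemma \ref{crepant} (equivalently on the index-one cover $W$, where the map to $X$ genuinely is ramified to order $m$ along $x_{n+1}=0$). The paper sidesteps all of this by invoking Esser's theorem that $\mld(V/\mu_m)$ equals the smallest charge of the BHK mirror of $V$, and computing that smallest mirror charge to be $1/m$ by a second application of Proposition \ref{product}. Your lower-bound sketch is repairable---since $K_V\sim 0$ is Cartier and $V$ is klt, $V$ is canonical, so every divisor over $V/\mu_m$ has log discrepancy at least $1/r\geq 1/m$ with $r\mid m$ the inertia order, and no Reid--Tai computation is needed---but as written your upper bound supplies no valid divisor of log discrepancy exactly $1/m$, so the claim $\mld(V/\mu_m)=1/m$ is not established.
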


We also compute the weights and degree of Esser's hypersurface explicitly,
in particular proving Esser's conjecture
that the last weight $a_{n+1}$ is equal to 1.

\begin{proof}
Let $r$ be a positive integer, and let $n$ be $2r$ or $2r+1$.
We first prove the basic properties
of Esser's hypersurface $V$, namely that it is a well-formed
quasi-smooth Calabi--Yau hypersurface.
Here $V$ was defined in section
\ref{esserintro} by writing out its equation.
That determines the weights $a_0,\ldots,a_{n+1}$ and degree $D$ of $V$,
given the requirement that
$\gcd(a_0,\ldots,a_{n+1})=1$.
By the form of the equation (a loop for $n$ odd, $x_0^2$ plus a loop
for $n$ even), $V$ is quasi-smooth.

The weighted projective space containing $V$
is well-formed: if a prime number $p$
divides all but one weight $a_i$, then $p$ divides $D$ since there
is a monomial not involving $x_i$.  Then,
there is either a monomial of the form
$x_j^{a}x_i$ in the equation for $V$ or
$n$ is even and $i = 0$.  In the former case,
we get that $p$ divides $a_i$ too,
contradicting that $\gcd(a_0,\ldots,a_{n+1})=1$. 
The same holds in the latter case unless $p = 2$.
If $p = 2$ divides all weights except $a_0$
in the $n$ even case,
then $D \equiv 2 \pmod 4$.
The remaining exponents $b_1,\ldots,b_{2r},v_{2r}$
are odd, so this means the weights
$a_i$ would have to alternate between $0 \pmod 4$
and $2 \pmod 4$ moving around the loop part of
the equation of $X$.
But the loop has odd length, a contradiction.
It follows that $V$ is well-formed
by Proposition \ref{iano}, together
with a look at the equation for $V$ in dimension 2
(section \ref{esserintro}).

\begin{lemma}
\label{esser-cy}
The hypersurface $V$ is Calabi--Yau,
in the sense that $D=\sum a_j$.
\end{lemma}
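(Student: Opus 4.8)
The plan is to determine the weights $a_0,\dots,a_{n+1}$ and degree $D$ of $V$ explicitly and then add up the weights, the point being that the resulting sum telescopes to $D$ by Proposition~\ref{product}. Since the identity $D=\sum_j a_j$ is unaffected by a common rescaling of $(D,a_0,\dots,a_{n+1})$, I may work with any convenient scaling of the solution of the linear system expressing that every monomial of the defining equation has degree $D$, and check only at the end --- using the behaviour of the Sylvester numbers modulo $8$, as in \cite[proof of Theorem 2.1]{Totaroklt} --- that in the coprime normalisation one has $a_{n+1}=1$ and $D=u_{2r+1}$ (resp.\ $D=2u_{2r}$); this incidentally also settles Esser's conjecture on the last weight.

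I would first treat the odd case $n=2r+1$, where the equation of $V$ is a single loop through all $2r+3$ variables. Reading the monomials off, the cyclic order of the variables is
\[
x_0,\ x_{2r+2},\ x_{r+1},\ x_r,\ x_{r+2},\ x_{r-1},\ \dots,\ x_1,\ x_{2r+1},
\]
with exponents $b_0, v_{2r+1}, b_{r+1}, b_r, b_{r+2}, b_{r-1},\dots, b_1, b_{2r+1}$ in that order. Normalising $a_{2r+2}=1$ and using the relations $(\text{exponent on }x_i)\,a_i+a_{\mathrm{next}(i)}=D$ one after another around the loop, each successive weight is expressed as an alternating sum of products of exponents, i.e.\ compactly in the $B$-notation of Section~\ref{esserintro}. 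The compatibility condition for this chain to close up around the loop --- and with it the value of $D$ --- is where Proposition~\ref{product} enters, via its $k=0$ case $m_{2r+1}u_{2r+1}-1=b_0\cdots b_{2r+1}v_{2r+1}$; it forces $D=u_{2r+1}$. The even case $n=2r$ is parallel: the equation is $x_0^{b_0}=x_0^2$ together with a loop through $x_1,\dots,x_{2r+1}$ of odd length $2r+1$, so $a_0=D/2$ and the loop is handled as before, the relevant instance of Proposition~\ref{product} now being the $k=1$ case $2m_{2r}u_{2r}-1=b_1\cdots b_{2r}v_{2r}$, which gives $D=2u_{2r}$; here the identity $\sum_j a_j=D$ reduces to $\sum_{j=1}^{2r+1}a_j=u_{2r}$.

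It then remains to add the weights. Here I would use the identity $B_{i_1\cdots i_k}=b_{i_1}B_{i_2\cdots i_k}+(-1)^k$, which is immediate from the definition, together with its counterpart $B_{i_1\cdots i_k}=(b_{i_k}-1)B_{i_1\cdots i_{k-1}}+b_{i_k}B_{i_1\cdots i_{k-2}}$: these make the partial sums of the weights along the loop telescope. The symmetry of alternating sums of $B$'s proved in Lemma~\ref{Bsymmetry} is what matches the contribution of the decreasing half $x_r,x_{r-1},\dots$ of the loop to that of the increasing half $x_{r+2},x_{r+3},\dots$, allowing the two to be combined. What remains after the telescoping is again an instance of Proposition~\ref{product} (for a suitable $k$), and it collapses to $\sum_j a_j=D$. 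Finally, since $V$ is well-formed and normal (being quasi-smooth), the canonical divisor formula $K_V=O_V(D-\sum_j a_j)$ recalled earlier shows $K_V\sim 0$, so $V$ is Calabi--Yau in the stated sense.

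I expect this last step to be the main obstacle: the weights are alternating sums over the interleaved index strings $r+1,r,r+2,r-1,\dots$, and organising the telescoping so that Lemma~\ref{Bsymmetry} and Proposition~\ref{product} apply cleanly, while carrying the two parity cases $n=2r$ and $n=2r+1$ in parallel, is delicate. A secondary and more routine obstacle is the arithmetic pinning down the coprime normalisation --- hence the explicit values $D=u_{2r+1}$ and $a_{n+1}=1$ --- where one must check that $u_{2r+1}$ is odd and that the weights produced by the loop recursion are coprime, using the same mod-$8$ analysis of Sylvester numbers as in \cite[proof of Theorem 2.1]{Totaroklt}.
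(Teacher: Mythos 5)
Your strategy is at heart the paper's: read the weights off the loop structure (equivalently, the rows of $A^{-1}$ via the inverse-loop-matrix formula, Lemma \ref{inverse}) and verify $D=\sum a_j$ by a telescoping sum; so the skeleton is sound. Two points of comparison. First, the detour through the coprime normalization ($a_{n+1}=1$ and $D=u_{2r+1}$, resp.\ $2u_{2r}$) is unnecessary for this lemma: as you yourself note, $D=\sum a_j$ is scale-invariant, so it suffices to show that the charges $q_i=a_i/D$ (the row sums of $A^{-1}$) add up to $1$. Pinning down the normalization is the content of the surrounding Theorem \ref{kltvariety}, and that is where Proposition \ref{product} genuinely enters; importing it into the lemma only complicates the bookkeeping. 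Second, and more seriously, the central telescoping identity --- which you correctly flag as the main obstacle --- is asserted rather than carried out, and the tools you predict will close it are not the ones needed. Grouping the terms of $\det(A)$ minus the entry sum by the largest suffix product $b_{r+1+i}\cdots b_{2r+1}v_{2r+1}$ they contain, the bracketed coefficient of the $i$-th group is exactly $b_{r+i}-1$ by the defining recursion for $b_{r+i}$, the coefficient of the first group is $1$ by the Sylvester identity $1-\tfrac{1}{b_0}-\cdots-\tfrac{1}{b_r}=\tfrac{1}{b_0\cdots b_r}$, and the leftover cancels because $v_n$ is \emph{defined} to be the alternating sum of $B$'s that remains; neither Lemma \ref{Bsymmetry} nor Proposition \ref{product} is used in this step. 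Since this identity is the entire content of the lemma, what you have written is a correct plan whose crux is unexecuted; the plan does work, but only once the grouping above is made explicit.
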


\begin{proof}
Let $A$ be the $(n+2)\times (n+2)$ matrix that encodes the equation
of $V$ (with each row specifying the exponents of one monomial
in the equation). Define the {\it charges }$q_0,\ldots,q_{n+1}$
as the sums of the rows of the matrix $A^{-1}$. Then the degree $D$
of $V$ is the least common denominator of the charges $q_i$,
and the weights of $V$ are given by $a_i=Dq_i$ \cite[section 2.3]{Essermld}.
To show that $V$ is Calabi--Yau, we have to show that the sum
of the charges is 1, that is, that the sum of the entries
of $A^{-1}$ is 1. We use the formula for the inverse
of a loop matrix \cite[Lemma 2.7]{Essermld}:

\begin{lemma}
\label{inverse}
Let $A$ be the loop matrix
$$A=\begin{pmatrix} e_1 & 1 & & \\
 & e_2 & \ddots & \\
 & & \ddots & 1\\
1 & & & e_k
\end{pmatrix}$$
with $k$ odd. Then the inverse matrix $A^{-1}$ is
$$\frac{1}{e_1\cdots e_k +(-1)^{k-1}}\begin{pmatrix} e_2\cdots e_k
& -e_3\cdots e_k & \cdots & -e_k & 1\\
1 & e_3\cdots e_ke_1 & \cdots & e_ke_1 & -e_1\\
\vdots &\vdots &\vdots & \vdots & \vdots\\
e_2\cdots e_{k-2} & \cdots & 1 & e_ke_1\cdots e_{k-2} & -e_1\cdots e_{k-2}\\
-e_2\cdots e_{k-1} & \cdots & -e_{k-1} & 1 & e_1\cdots e_{k-1}
\end{pmatrix}.$$
\end{lemma}

For $n=2r+1$, it follows that $A^{-1}$ is an integer matrix divided by
$b_0\cdots b_{2r+1}v_{2r+1}+1$. So we want to show
that $b_0\cdots b_{2r+1}v_{2r+1}+1$ minus the sum 
of the entries of that integer matrix is 0. Analyze this difference
by collecting all terms that are multiples of $b_{r+1}\cdots b_{2r+1}v_{2r+1}$,
then the remaining terms
that are multiples of $b_{r+2}\cdots b_{2r+1}v_{2r+1}$,
and so on. The result is:
\begin{multline*}
b_{r+1}\cdots b_{2r+1}v_{2r+1}\bigg[b_0\cdots b_r
\bigg( 1-\frac{1}{b_0}-\cdots-\frac{1}{b_r}\bigg)\bigg]\\
-\sum_{i=1}^{r+1}b_{r+1+i}\cdots b_{2r+1}v_{2r+1}
[b_0\cdots b_{r-i}(b_{r+1-i}-1)
B_{r+1,r,\ldots,r-1+i,r+2-i}]\\
-[B_{r+1,r,\ldots,2r+1,0}-B_{r,\ldots,2r+1,0}+\cdots-B_0+1]+1.
\end{multline*}
The first term in brackets is 1, by the properties of the Sylvester
numbers $b_0,\ldots,b_r$. By definition of $b_{r+i}$,
the expression
in brackets in the term indexed by $i$ (for $1\leq i\leq r+1$)
is $b_{r+i}-1$. Finally, the last line is $-v_{2r+1}$.
Therefore, the sum telescopes to zero. Thus
$V$ is Calabi--Yau in the sense that $D=\sum a_j$.

For $n=2r$, the proof is similar. Again, we have to show
that the sum of the entries of the matrix $A^{-1}$ is 1.
In this case, the equation of $V$ is $x_0^2$ plus a loop. By
Lemma \ref{inverse}, $A^{-1}$
is the block matrix $1/2$ followed by $1/(b_1\cdots b_{2r}v_{2r}+1)$
times an integer matrix $C$. So we need to show that the sum of the entries
of $C$ is $(b_1\cdots b_{2r}v_{2r}+1)/2$. Explicitly,
$b_1\cdots b_{2r}v_{2r}+1$ minus 2 times the sum of the entries of $C$
(which we want to be zero) is
\begin{multline*}
b_{r+1}\cdots b_{2r}v_{2r}\bigg[b_1\cdots b_r\bigg(
1-2\bigg(\frac{1}{b_1}-\cdots-\frac{1}{b_r}\bigg)\bigg)\bigg]\\
-\sum_{i=1}^{r}b_{r+1+i}\cdots b_{2r}v_{2r}[2b_1\cdots b_{r-i}(b_{r+1-i}-1)
B_{r+1,r,\ldots,r-1+i,r+2-i}]\\
-2[B_{r+1,r,\ldots,2r,1}-B_{r,\ldots,2r,1}+\cdots-B_1+1]+1.
\end{multline*}
Again, the first term in brackets is 1,
the expression in brackets in the term indexed by $i$
is $b_{r+i}-1$, and the last line is $-v_{2r}$.
So the sum telescopes to zero. Thus $V$ is Calabi--Yau
in the sense that $D=\sum a_j$,
in even as well as odd dimensions. Lemma \ref{esser-cy} is proved.
\end{proof}

By Lemma \ref{esser-cy} plus the preceding results,
$V$ is a well-formed quasi-smooth Calabi--Yau hypersurface.
More precisely, $K_V=O_V(D-\sum a_j)=O_V$,
and so $K_V$ is linearly equivalent
to zero. As a result,
$K_V$ is Cartier and $V$ is klt,
so $V$ is canonical.

The inverse $A^{-1}$ of a loop matrix is written explicitly
in Lemma \ref{inverse}. We read off that the charge $q_{2r+2}$
is the alternating sum
\begin{align*}
q_{2r+2}&=\frac{B_{0,2r+1,\ldots,r,r+1}}{b_0\cdots b_{2r+1}v_{2r+1}+1}\\
&= \frac{m_{2r+1}}{b_0\cdots b_{2r+1}v_{2r+1}+1}\\
&= \frac{1}{u_{2r+1}},
\end{align*}
by Proposition \ref{product}. The degree $D$ of $V$ is the least
common multiple of the denominators of the charges $q_i$;
but in this case of a loop matrix, all the charges have the same
denominator \cite[Lemma 7.1]{ETWCalabi}. Therefore, the degree $D$ of $V$
is equal to $u_{2r+1}$. So the weights of $V$ are
$a_i=u_{2r+1}q_i$. In particular, the weight $a_{2r+2}$ is equal to 1,
as we want.

By the loop equation of $V$, the absolute value
of the determinant of $A$ is given by \cite[section 3]{ABS}:
$$|\det(A)|=b_0\cdots b_{2r+1}v_{2r+1}+1.$$
The group $\Aut_T(V)$ of {\it toric automorphisms }of $V$ is the group of
automorphisms of $V$ that are given by diagonal matrices in the given
coordinates. This is related to the degree $D$ of $V$ by
\cite[section 3]{ABS}:
$$|\det(A)|=D|\Aut_T(V)|.$$
By the computation of the degree $D$ above,
it follows that $|\Aut_T(V)|=m_{2r+1}$. Moreover,
since the equation of $V$ is a loop, the group
$\Aut_T(V)$ is cyclic \cite[Proposition 2]{ABS}. So the group of toric
automorphisms of $V$ is the cyclic group $\mu_m$
with $m=m_{2r+1}$. (We will describe the $\mu_m$-action
explicitly in the proof of Lemma \ref{crepant}.)

Also, because the equation of $V$
is a loop, the action of $\mu_m$
on $V$ is free in codimension 1
\cite[Proposition 7.2]{ETWCalabi}.
As a result, $V/\mu_m$ is a klt Calabi--Yau variety (not a pair).

Esser showed that the mld of $V/\mu_m$ is the smallest charge
of the BHK mirror of $V$ \cite[Theorem 3.1]{Essermld}.
(The mirror is defined to be the hypersurface
whose equation is associated to the transpose
of the matrix $A$.) One mirror charge is
\begin{align*}
q_{2r+2}^{\mathsf{T}} & =\frac{B_{r+1,r,r+2,r-1,\ldots,2r+1,0}}{b_0\cdots b_{2r+1}v_{2r+1}+1}\\
&= \frac{u_{2r+1}}{b_0\cdots b_{2r+1}v_{2r+1}+1}\\
&= \frac{1}{m_{2r+1}}.
\end{align*}
Since $A^{\T}$ is a loop matrix, all the mirror charges have the same
denominator, and so $1/m_{2r+1}$ is the smallest mirror charge.
Thus $\mld(V/\mu_m)=1/m$, as we want. (Another way to compute the mld
is given in Lemma \ref{crepant}: the variety $V/\mu_m$ is crepant-birational
to the pair $(X,(1-\frac{1}{m})S)$ discussed later.)

It remains to give the analogous argument in even dimensions.
Let $n=2r$. 
Because the equation of $V$ is of the form $x_0^2$ plus a loop,
we can write out the inverse matrix $A^{-1}$ using the formula
for the inverse of a loop matrix, Lemma \ref{inverse}.
We read off that the charge $q_{2r+1}$
is the alternating sum
\begin{align*}
q_{2r+1}&=\frac{B_{1,2r,\ldots,r,r+1}}{b_1\cdots b_{2r}v_{2r}+1}\\
&=\frac{m_{2r}}{b_1\cdots b_{2r}v_{2r}+1}\\
&=\frac{1}{2u_{2r}},
\end{align*}
by Proposition \ref{product}. 
The degree $D$ of $V$ is the least
common multiple of the denominators of the charges $q_i$;
so the degree $D$ is a multiple of $2u_{2r}$, say $D=2u_{2r}\lambda$
for a positive integer $\lambda$. The last weight $a_{2r+1}$ is then
$Dq_{2r+1}=\lambda$. Using the loop of monomials (starting with
$x_{2r+1}^{v_{2r+1}}x_{r+1}$) in the equation of $V$, it follows
that all weights $a_i$ with $i\neq 0$ are also multiples of $\lambda$.
By the monomial $x_0^2$ in the equation of $V$,
we have $a_0=D/2=u_{2r}\lambda$, which is also
a multiple of $\lambda$. Since $\gcd(a_0,\ldots,a_{2r+1})=1$, it follows
that $\lambda=1$. Thus the degree $D$ of $V$
is equal to $2u_{2r}$, the weights of $V$ are
$a_i=2u_{2r}q_i$, and the weight $a_{2r+1}$ is equal to 1,
as we want.

Since the equation of $V$ for $n=2r$ is $x_0^2$ plus
a loop, the absolute value
of the determinant of $A$ is given by \cite[section 3]{ABS}:
$$|\det(A)|=2(b_1\cdots b_{2r}v_{2r}+1).$$
The group of toric automorphisms is related to the degree $D$ of $V$ by
\cite[section 3]{ABS}:
$$|\det(A)|=D|\Aut_T(V)|.$$
By the computation of the degree $D$ above,
it follows that $|\Aut_T(V)|=2m_{2r}$. Moreover,
since the equation of $V$ is $x_0^2$ plus a loop, the group
$\Aut_T(V)$ is $\mu_2$ times a cyclic group \cite[Proposition 3.2]{ABS}.
So the group of toric
automorphisms of $V$ is the product group $\mu_2\times\mu_m$
with $m=m_{2r}$. (We will describe the $\mu_m$-action
explicitly in Lemma \ref{crepant}.
The $\mu_2$-action changes the sign of $x_0$.)

Also, because the equation of $V$
is $x_0^2$ plus a loop, the action of $\mu_m$
on $V\cap \{x_0\neq 0\}$ is free in codimension 1
\cite[Proposition 7.2]{ETWCalabi}. Also, a toric automorphism
that fixes the divisor $V\cap \{x_0=0\}$ can be written
as $[x_0,\ldots,x_{n+1}]\mapsto [ex_0,x_1,\ldots,x_{n+1}]$
for some $e\in \C^*$. We have $e^2=1$, because the automorphism
preserves the equation $x_0^2+x_1^3x_{2r+1}+\cdots=0$ of $V$.
Since $m=m_{2r}$
is odd, it follows that no nontrivial element of $\mu_m$ fixes
this divisor. So $\mu_m$ acts freely in codimension 1 on $V$,
and hence $V/\mu_m$ is a klt Calabi--Yau variety (not a pair).
Given that, Esser showed that $\mld(V/\mu_m)=1/m$
\cite[section 4]{Essermld}. (Another way to compute the mld
is given in Lemma \ref{crepant}:
the variety $V/\mu_m$ is crepant-birational
to the pair $(X,(1-\frac{1}{m})S)$ discussed later.)
\end{proof}

\section{On the first gap of global log canonical thresholds}
\label{pairmldsection}

\begin{theorem}
\label{pairmld}
For every $n\geq 2$, there is a complex klt Calabi--Yau pair
$(X,(1-\frac{1}{m})S)$
of dimension $n$ with $S$ an irreducible divisor,
and with the number $m=m_n$
defined in section \ref{esserintro}. In particular, $m_n>2^{2^n}$ for $n>2$.
\end{theorem}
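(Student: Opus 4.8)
The plan is to obtain $(X,S)$ as the birational modification of Esser's Calabi--Yau variety $V/\mu_m$ that extracts the divisor witnessing its minimal log discrepancy. By Theorem \ref{kltvariety}, $V/\mu_m$ is a klt Calabi--Yau variety with $\mld(V/\mu_m)=1/m$, and the proof there --- via Esser's identification of this mld with the smallest charge $1/m$ of the Berglund-H\"ubsch-Krawitz mirror $V^{\mathsf{T}}$, together with Proposition \ref{product} --- singles out a specific prime divisorial valuation $E$ over $V/\mu_m$ of log discrepancy exactly $1/m$. Since $V/\mu_m$ is klt, the minimal model program lets us extract $E$: there is a projective birational morphism $\pi\colon X\to V/\mu_m$ with $X$ normal and $\Q$-factorial whose exceptional divisor is the prime divisor $S\coloneqq E$ and with $-S$ relatively ample. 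Because $K_{V/\mu_m}\sim 0$ and $E$ has log discrepancy $1/m$, pulling back gives $K_X+(1-\tfrac1m)S=\pi^{*}K_{V/\mu_m}\sim_{\Q}0$, so $(X,(1-\tfrac1m)S)$ is Calabi--Yau; being crepant-birational to the klt pair $(V/\mu_m,0)$, it is itself klt. The boundary $S$ is irreducible by construction, and $m\ge 2$ makes $1-\tfrac1m\in(0,1)$, so this already proves the statement; the bound $m_n>2^{2^n}$ for $n>2$ is the estimate already recorded in section \ref{esserintro}.

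To deliver the promised explicit example, I would then identify this divisorial extraction (a weighted blow-up in suitable coordinates) with a concrete model. Since $V$ is a weighted loop hypersurface and $V/\mu_m$ is the quotient of $V$ by a subgroup of its toric automorphisms, the valuation $E$ lifts to a $\mu_m$-invariant monomial valuation on $V$ and can be extracted equivariantly, so $X=Z/\mu_m$ for an explicit weighted blow-up $Z\to V$; carrying this out produces a description of $X$ as a well-formed hypersurface in a weighted projective space whose weights and degree are assembled from the numbers $B_{\ldots}$, $u$, $v$, $m$ of section \ref{esserintro}, with $S$ a coordinate divisor. The well-formedness of $X$, its quasi-smoothness away from a single coordinate point, and its klt property are then checked just as in the proof of Theorem \ref{kltvariety}: the loop structure of the equation is preserved, the analysis of possible bad primes is parallel, and $m$ being odd again rules out $p=2$. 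The Calabi--Yau identity reduces to one line of adjunction, using $K_X=O_X(d-\sum_{i=0}^{n}a_i-a_{n+1})$ and $S\sim O_X(a_{n+1})$ with $d=\sum_{i=0}^{n}a_i+1$ equal to the degree of Esser's $V$, so that $K_X+(1-\tfrac1m)S\sim O_X(0)$.

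The main obstacle is this explicit matching: verifying that the valuation singled out by the mirror computation in Theorem \ref{kltvariety} is precisely the one extracted by the concrete weighted blow-up --- i.e.\ computing its discrepancy on the explicit model and confirming it equals $1/m$ --- and checking that the restriction of the (modified) loop equation to $S$ is irreducible. Once those are settled, klt-ness comes for free from crepant-birationality with $V/\mu_m$, the Calabi--Yau property is the adjunction above, and the doubly-exponential lower bound on $m_n$ is already available from section \ref{esserintro}. If one only wants the existence statement as written, the first paragraph suffices with no explicit model; the explicit description is recorded, from the complementary point of view, in Lemma \ref{crepant}.
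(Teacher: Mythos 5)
Your first paragraph is a correct proof of the theorem as stated, but it is genuinely different from the paper's argument: you deduce existence abstractly from Theorem \ref{kltvariety} by extracting the mld-computing divisor $E$ over $V/\mu_m$ via the standard MMP extraction theorem (a klt variety and a divisorial valuation of log discrepancy $1/m<1$ admit a $\Q$-factorial extraction), then use crepancy to get both the Calabi--Yau identity and kltness. The paper explicitly acknowledges that this ``in principle'' route works, but deliberately takes the opposite direction: it writes down $X$ from scratch as a chain-type hypersurface in $\P^{n+1}(c_0,\ldots,c_{n+1})$ with explicit weights, checks quasi-smoothness and well-formedness by hand (the coprimality of $c_0$ and $c_{n+1}$ coming from Proposition \ref{product}), and proves $d-\sum_{j=0}^n c_j=1$ by the same telescoping-sum technique used for $V$, so that $K_X+(1-\tfrac1m)S\sim O_X(0)$ exactly as in your adjunction line. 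The trade-offs: your route is shorter and needs no new computation, but it depends on the full strength of Theorem \ref{kltvariety} (hence on Esser's mirror-symmetry formula for the mld) and on quotable MMP machinery, and it produces no explicit model; the paper's route is logically independent of the mirror-symmetry input, is elementary modulo Proposition \ref{product}, and yields the concrete equations listed in dimensions $2$, $3$, $4$ --- which is the stated point of the section and is what makes Lemma \ref{crepant} (the identification of your abstract extraction with this explicit hypersurface, giving a second computation of $\mld(V/\mu_m)$) possible. Your second and third paragraphs correctly anticipate that matching but leave it as a sketch; since the theorem itself is purely existential, that does not affect correctness. One small point worth making explicit in your argument is that the mld $1/m<1$ of the klt variety $V/\mu_m$ is actually attained by a divisorial valuation (standard for klt pairs, and in any case witnessed concretely by the divisor $S$ of Lemma \ref{crepant}), so that there really is an $E$ to extract.
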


In principle, this follows from Theorem \ref{kltvariety}, with $X$ some
birational model of Esser's klt Calabi--Yau variety $V/\mu_m$;
but the point of this section is to construct an explicit
variety $X$.
This example should be optimal. A bit more strongly, we conjecture:

\begin{conjecture}
\label{pairconj}
For every $n\geq 2$, if $(X,(1-b)S)$ is a complex klt Calabi--Yau pair
of dimension $n$ such that $S$ is a nonzero effective
Weil divisor, then $b\geq 1/m_n$ for the number $m_n$
in Theorem \ref{pairmld}.
\end{conjecture}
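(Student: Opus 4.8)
\emph{Proof strategy for Conjecture \ref{pairconj}.}
The plan is to reformulate the desired bound as a statement about minimal log discrepancies. For a klt Calabi--Yau pair $(X,(1-b)S)$ with $S$ a nonzero effective Weil divisor, the log discrepancy of the pair along each component of $S$ equals $1-(1-b)=b$, so $\mld(X,(1-b)S)\le b$. It therefore suffices to prove that every such pair satisfies $\mld(X,(1-b)S)\ge 1/m_n$. I would first note that this bound, if established, is sharp: by Lemma \ref{crepant} the extremal pair of Theorem \ref{pairmld} is crepant-birational to Esser's variety $V/\mu_m$, which has mld $1/m$ by Theorem \ref{kltvariety}; since mld is a crepant-birational invariant, the extremal pair has $\mld=1/m_n$, with the minimum computed by $S$ itself.

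Next I would establish the qualitative form of the conjecture. The set of numbers $b$ arising from $n$-dimensional klt Calabi--Yau pairs $(X,(1-b)S)$ is controlled by the ACC for log canonical thresholds and for minimal log discrepancies of Hacon--M\textsuperscript{c}Kernan--Xu \cite{HMXACC}, together with the boundedness in \cite{HMXbounded}: the possible values of $b$ satisfy the descending chain condition, so the infimum is attained at a positive number $c_n$ realized by some pair. Theorem \ref{pairmld} gives $c_n\le 1/m_n$, so the entire content of the conjecture is the reverse inequality $c_n\ge 1/m_n$, i.e.\ the \emph{exact} value of this ``first gap.''

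To pin down the constant I would reduce to the minimal-mld problem for klt Calabi--Yau \emph{varieties}. When $b=1/\ell$ is a standard coefficient, the global index-$1$ cover $\pi\colon Y\to X$ of $(X,(1-\tfrac1\ell)S)$ is a klt Calabi--Yau variety of the same dimension with $K_Y=\pi^*(K_X+(1-\tfrac1\ell)S)\sim 0$, and Lemma \ref{crepant} exhibits exactly this correspondence in the extremal case, where $Y=V/\mu_m$ realizes the minimal mld $1/m_n$ conjectured by Esser \cite{Essermld} and proved sharp in Theorem \ref{kltvariety}. The goal is then a rigidity statement: a pair achieving $c_n$ should be forced to have standard coefficient $b=1/m$ and the Sylvester-type weighted-hypersurface structure underlying $V$, so that its cover is crepant-birational to Esser's variety and $c_n\ge 1/m_n$ follows from the minimal-mld bound for klt Calabi--Yau $n$-folds.

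The main obstacle is precisely this last reduction, and it is serious. The quotient formula for log discrepancies yields only $\mld(Y)\ge\mld(X,(1-b)S)$ for the index-$1$ cover, which is the wrong direction to transfer a lower bound from varieties to pairs; equality holds in the extremal case only because the mld-computing divisor has trivial stabilizer. A complete proof thus requires a genuine classification---that the first gap is realized only by the Sylvester example---of essentially the same strength as Esser's minimal-mld conjecture, which is open for $n\ge 3$. In dimension $2$, by contrast, the conjecture is a theorem: the crepant-birational equivalence of Lemma \ref{crepant} reduces it to the fact that $1/13$ is the smallest mld of a klt Calabi--Yau surface \cite[Proposition 6.1]{ETWCalabi}, in agreement with the global-threshold computations of J.~Liu--Shokurov \cite{LS} and Koll\'ar \cite{Kollarlog}.
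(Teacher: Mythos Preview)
The statement you are attempting to prove is labelled a \emph{Conjecture} in the paper, and the paper does not contain a proof of it. The paper only records that the $n=2$ case is a theorem of Liu--Shokurov \cite[Theorem 1.1]{LS}, and that a positive lower bound exists in each dimension by \cite[Theorem 1.5]{HMXACC}. So there is no ``paper's own proof'' to compare your proposal against; your task, as you eventually acknowledge, is to prove an open problem.

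Your write-up is not a proof but an outline of why the problem is hard, and in that respect it is honest: you correctly identify that the index-$1$ cover only gives $\mld(Y)\ge\mld(X,(1-b)S)$, which is the wrong direction, and that closing the gap would require a classification of the same strength as Esser's conjecture on the minimal mld of klt Calabi--Yau varieties. That diagnosis is accurate. One point deserves correction, however: your claimed argument for $n=2$ is not valid. Lemma \ref{crepant} only says that the \emph{specific} pair of Theorem \ref{pairmld} is crepant-birational to Esser's variety; it says nothing about an arbitrary klt Calabi--Yau surface pair $(X,(1-b)S)$, so it cannot ``reduce'' the conjecture to the surface mld bound $1/13$. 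The $n=2$ case is known because Liu--Shokurov carry out a direct classification, not because of any reduction of the type you sketch; your reduction fails in dimension $2$ for exactly the same reason it fails in higher dimensions.
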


The conjecture is true in dimension 2, by Liu and Shokurov
\cite[Theorem 1.1]{LS}. They formulate several other
extremal problems which have the same bound in dimension 2
(namely, $1/13$), and conjecturally in all dimensions.
In dimension 1, the bound in Conjecture \ref{pairconj}
is $1/3$, by the example of $(\P^1,\frac{2}{3}S)$
with $S$ equal to 3 points.
We know that there is some positive lower bound for this problem
in each dimension,
by Hacon-M\textsuperscript{c}Kernan-Xu \cite[Theorem 1.5]{HMXACC}.

In dimension 2, our example is:
\begin{align*}
X&=X_{30}\subset \P^3(15,10,4,13),\\
0&=x_0^2+x_1^3+x_2^5x_1+x_3^2x_2.
\end{align*}
Here $S=X\cap \{x_3=0\}$, and $(X,\frac{12}{13}S)$ is the desired Calabi--Yau
pair. In dimension 3, we have:
\begin{align*}
X&=X_{360}\subset \P^4(180,115,49,15,311),\\
0&=x_0^2+x_1^3x_3+x_2^5x_1+x_3^{12}x_0+x_4x_2.
\end{align*}
Here $S=X\cap \{x_4=0\}$, and $(X,\frac{310}{311}S)$ is the desired Calabi--Yau
pair. In dimension 4, we have:
\begin{align*}
X&=X_{1387722}\subset \P^5(693861,462574,198098,32152,1036,677785),\\
0&=x_0^2+x_1^3+x_2^7x_4+x_3^{37}x_2+x_4^{893}x_1+x_5^2x_3.
\end{align*}
Here $S=X\cap \{x_5=0\}$, and $(X,\frac{677784}{677785}S)$
is the desired Calabi--Yau pair.

\begin{proof}
(Theorem \ref{pairmld})
We want to construct a klt Calabi--Yau pair
$(X,(1-\frac{1}{m})S)$ of dimension $n\geq 2$ with $S$ an irreducible divisor
(and the number $m=m_n$ defined above).
We define $X$ as a hypersurface in a weighted projective space
$Y=\P^{n+1}(c_0,\ldots,c_{n+1})$.
Namely, if $n=2r+1$, let $X$ have the same equation as $V$
(from section \ref{esserintro}) except
for the first and last terms:
$$0=x_0^2+x_1^{b_1}x_{2r+1}+\cdots+x_r^{b_r}x_{r+2}
+x_{r+1}^{b_{r+1}}x_r
+\cdots+x_{2r+1}^{b_{2r+1}}x_0
+x_{2r+2}x_{r+1}.$$
We also have formulas for the degree $d$ and the weights $c_j$ of $X$.
(With these definitions,
it is straightforward that each monomial above
has degree $d$.)
First, $d \coloneqq b_0\cdots b_{2r+1}$. Next, the last weight $c_{2r+2}$
is $m_{2r+1}=B_{0,2r+1,1,2r,\ldots,r,r+1}$, the index
of Esser's example.
Finally,
$$c_{r+1+i} \coloneqq b_{r+1-i}\cdots b_r b_{r+1}\cdots b_{r+i}B_{0,2r+1,1,2r,\ldots,
r-i}$$
for $0\leq i\leq r$, and
$$c_{r-i} \coloneqq b_{r+1-i}\cdots b_rb_{r+1}\cdots b_{r+1+i}B_{0,2r+1,1,2r,\ldots,
r-1-i,r+2+i}$$
for $0\leq i\leq r$. (Since a $B$ with empty subscript
is 1, we have $c_0=b_1\cdots b_{2r+1}$.)

If $n=2r$, 
let $X$ have the same equation as $V$ except
for the second and last terms:
$$0=x_0^2+x_1^3+x_2^{b_2}x_{2r}+\cdots+x_r^{b_r}x_{r+2}
+x_{r+1}^{b_{r+1}}x_r
+\cdots+x_{2r}^{b_{2r}}x_1
+x_{2r+1}^{2}x_{r+1}.$$
Again, we have formulas for the degree $d$ and weights $c_j$ of $X$.
First, $d \coloneqq b_0\cdots b_{2r}$. Next, the last weight $c_{2r+1}$
is $m_{2r}=B_{1,2r,2,2r-1,\ldots,r,r+1}$, the index 
of Esser's example.
Also, $c_0 \coloneqq b_1\cdots b_{2r}$.
Finally,
$$c_{r+1+i} \coloneqq 2b_{r+1-i}\cdots b_r b_{r+1}\cdots b_{r+i}B_{1,2r,2,2r-1,\ldots,
r-i}$$
for $0\leq i\leq r-1$, and
$$c_{r-i} \coloneqq 2b_{r+1-i}\cdots b_rb_{r+1}\cdots b_{r+1+i}B_{1,2r,2,2r-1,\ldots,
r-1-i,r+2+i}$$
for $0\leq i\leq r-1$.

Define the divisor $S$ to be $X\cap \{x_{n+1}=0\}$. From the form
of the equation (a chain as in section \ref{notation},
or $x_0^2$ plus a chain),
it is immediate
that $X$ and $S$ are quasi-smooth. (Since $S$ is a hypersurface
of dimension at least 1 in a weighted projective space,
it follows that $S$ is irreducible.)
The weight $c_{n+1}$ (the degree of $S$)
is equal to the index $m$ of Esser's example,
Next, we show that $K_X=O_X(d-\sum c_j)$ is equal to $O_X(1-m)$,
so that $(X,(1-\frac{1}{m})S)$ is a klt Calabi--Yau pair (assuming
that $X$ is well-formed, as we show below).
Since $c_{n+1}=m$, it is equivalent to show that
$d-\sum_{j=0}^n c_j=1$. Assume that $n=2r+1$. We compute
$d-\sum_{j=0}^n c_j$ by collecting all terms (in the formulas
above for $d$ and the $c_j$'s) that are multiples of $b_{r+1}\cdots b_{2r+1}$,
then the remaining terms that are multiples of $b_{r+2}\cdots b_{2r+1}$,
and so on. The result is:
\begin{multline*}
d-\sum_{j=0}^{2r+1} c_j=b_{r+1}\cdots b_{2r+1}\bigg[b_0\cdots b_r
\bigg( 1-\frac{1}{b_0}-\cdots-\frac{1}{b_r}\bigg)\bigg]\\
-\sum_{i=1}^{r+1}b_{r+1+i}\cdots b_{2r+1}[b_0\cdots b_{r-i}(b_{r+1-i}-1)
B_{r+1,r,\ldots,r-1+i,r+2-i}].
\end{multline*}
The first term in brackets is 1, by the properties of the Sylvester
numbers $b_0,\ldots,b_r$. By definition of $b_{r+i}$,
the expression
in brackets in the term indexed by $i$ (for $1\leq i\leq r+1$)
is $b_{r+i}-1$. Therefore, the sum telescopes, showing that
$d-\sum_{j=0}^{2r+1}c_j=1$, as we want. For $n=2r$, the proof is similar:
\begin{multline*}
d-\sum_{j=0}^{2r} c_j=b_{r+1}\cdots b_{2r}\bigg[ b_0\cdots b_r
\bigg( 1-\frac{1}{b_0}-\cdots-\frac{1}{b_r}\bigg)\bigg]\\
-\sum_{i=1}^{r}b_{r+1+i}\cdots b_{2r}[2b_1\cdots b_{r-i}(b_{r+1-i}-1)
B_{r+1,r,\ldots,r-1+i,r+2-i}].
\end{multline*}
Again, the expression in brackets in the term indexed by $i$
is $b_{r+i}-1$. So the sum telescopes, showing that
$d-\sum_{j=0}^{2r}c_j=1$, as we want.

The only property that remains to be checked
is that $X$ is well-formed. (We need this in order to justify the formula
above for $K_X$, with $X$ viewed as a variety rather than a pair.)
The main point is to show that the weighted
projective space $Y=\P^{n+1}(c_0,\ldots,c_{n+1})$ is well-formed.
Indeed, from there it follows that $X$ is well-formed when $n\geq 3$,
by Proposition \ref{iano}.
This general result
does not apply in dimension 2,
but in that case we can check by hand that our example
is well-formed. Namely, the example is
$X\subset Y=\P^3(15,10,4,13)$, defined by
$0=x_0^2+x_1^3+x_2^5x_1+x_3^2x_2$. This is well-formed, because
$X$ does not
contain the 1-dimensional stratum $\{ x_2=x_3=0\}$ of $Y$ with stabilizer
$\mu_5$ or the 1-dimensional stratum $\{ x_0=x_3=0\}$ with stabilizer
$\mu_2$.

So, for each $n\geq 2$, we need to show that $Y$ is well-formed, meaning
that $\gcd(c_0,\ldots,\widehat{c_a},\ldots,c_{n+1})=1$
for each $0\leq a\leq n+1$. Let $p$ be a prime number dividing
$c_j$ for each $j\neq a$; we will derive a contradiction.
From looking at the equation of $X$,
we see that it includes a monomial
not involving $x_a$, and so
$p$ divides the degree $d$ of $X$. For $n=2r+1$, the variable $x_a$
occurs with exponent 1 in some monomial in the equation of $X$,
and so $p$ divides $c_a$ as well. For $n=2r$, we get the same conclusion
unless $a=0$ or $a=2r+1$. In those cases, the variable $x_a$ occurs
with exponent 2 in some monomial in the equation of $X$,
and so $p$ divides $2c_a$. It follows that $p$ divides $c_a$ unless
$p=2$. To analyze the case where $n=2r$ and $p=2$, use that
$b_1,\ldots,b_{2r}$ are all odd by their definition. So
$c_0=b_1\cdots b_{2r}$ is odd and
$c_{2r+1}=m_{2r}=B_{1,2r,\ldots,r,r+1}\equiv 1-1+1-1+\cdots+1
\equiv 1\pmod{2}$, contradicting that $2$ divides all but one
of the weights $c_j$. Thus, if $p$ divides all but one of the weights
$c_j$, then it divides all the weights.

Return to allowing any dimension $n\geq 2$.
We now have a prime number $p$ that divides the weight $c_j$
for each $0\leq j\leq n+1$, and we want to get a contradiction.
For $n=2r+1$, we have $c_0=b_1\cdots b_{2r+1}$ and
$c_{2r+2}=m_{2r+1}$. By Proposition \ref{product},
$m_{2r+1}u_{2r+1}-1=b_0\cdots b_{2r+1}v_{2r+1}$, and so
$c_0$ and $c_{2r+2}$ are relatively prime.
Next, for $n=2r$, we have $c_0=b_1\cdots b_{2r}$
and $c_{2r+1}=m_{2r}$. By Proposition \ref{product},
$2m_{2r}u_{2r}-1=b_1\cdots b_{2r}v_{2r}$, and so
$c_0$ and $c_{2r+1}$ are relatively prime. This completes
the proof that $X$ is well-formed. Theorem \ref{pairmld}
is proved.
\end{proof}

We remark that $X$ is rational at least in odd dimensions,
$n=2r+1$. Indeed,
the variable $x_{2r+2}$ occurs only in one monomial, and it has exponent 1.
So the projection from $X$
to $\P^{2r+1}(c_0,\ldots,c_{2r+1})$ is a birational map.

\begin{lemma}
\label{crepant}
In each dimension $n\geq 2$, the klt Calabi--Yau pair $(X,(1-\frac{1}{m})S)$
of Theorem \ref{pairmld} is crepant-birational to Esser's
klt Calabi--Yau variety $V/\mu_m$ with mld $1/m$.
\end{lemma}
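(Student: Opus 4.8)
The plan is to exhibit an explicit common weighted-projective ambient space together with a toric (diagonal) birational map realizing the crepant equivalence, treating the odd and even dimensions in parallel. First I would recall the two hypersurfaces in question: Esser's $V\subset \P^{n+1}(a_0,\ldots,a_{n+1})$ is a loop (or $x_0^2$ plus a loop), with a $\mu_m$-action by the cyclic group $\Aut_T(V)$ of toric automorphisms, while $X\subset \P^{n+1}(c_0,\ldots,c_{n+1})$ from Theorem \ref{pairmld} is obtained from the \emph{same} equation by replacing the last loop-monomial $x_{n+1}^{v}x_{r+1}$ by the chain-monomial $x_{n+1}x_{r+1}$ (and, when $n=2r+1$, the first monomial $x_0^{b_0}x_{2r+2}$ by $x_0^2$... wait, these are already matched up to the last variable). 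The key observation is that the map forgetting/solving for $x_{n+1}$ is birational in both directions: on the $X$ side, since $x_{n+1}$ appears with exponent $1$, projection to $\P^{2r+1}(c_0,\ldots,c_{2r+1})$ (resp. to the appropriate subspace when $n=2r$, using the $x_0^2$ monomial) is birational (as already noted in the remark following the proof of Theorem \ref{pairmld}); on Esser's side, $V/\mu_m$ maps birationally to a similar quotient of a weighted projective space, and the point is to identify these two birational models and to check the discrepancy of the exceptional divisor $S$.

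The key steps, in order, would be: (1) Describe the $\mu_m$-action on $V$ explicitly — this is promised in the statement to be done here, so I would write the generator as a diagonal matrix $\mathrm{diag}(\zeta^{w_0},\ldots,\zeta^{w_{n+1}})$ with $\zeta$ a primitive $m$-th root of unity, the weights $w_i$ read off from a null vector of the loop matrix $A$ modulo $m$ (equivalently, from a row of $\mathrm{adj}(A)$), and verify using $|\det A| = D\cdot m$ and the loop structure (via \cite[Proposition 2]{ABS}) that this generates $\Aut_T(V)$. (2) Construct the birational map: pass to the chart $x_{r+1}\neq 0$ (or a suitable $\mu_m$-semi-invariant monomial), where both $V/\mu_m$ and $X$ become affine hypersurfaces in quotients of $\mathbf{A}^{n+1}$; use the monomial change of coordinates dictated by comparing the exponent matrix of $V$ with that of $X$, and check it is invertible over $\Q$ (so birational) — here the weights $c_j$ of $X$ are, up to the common factor, exactly the charges/weights that appear when one quotients $V$ by $\mu_m$ and re-homogenizes, which is why the formulas for the $c_j$ in Theorem \ref{pairmld} involve the same alternating-sum numbers $B_{\ldots}$. (3) Check crepancy: compute $K_{V/\mu_m}$ and $K_X + (1-\tfrac1m)S$ and show the birational map pulls one back to the other. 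Since $K_V \sim 0$ (Lemma \ref{esser-cy}) and $\mu_m$ acts freely in codimension $1$ (proved in Theorem \ref{kltvariety}), $K_{V/\mu_m}\sim 0$; and $K_X + (1-\tfrac1m)S \sim_\Q 0$ by the computation $d - \sum_{j=0}^n c_j = 1$ together with $c_{n+1}=m$ in the proof of Theorem \ref{pairmld}. Both sides being crepant to a common model then reduces crepancy to matching the single divisor $S$: I would show that the divisor on $V/\mu_m$ corresponding (under the birational identification) to $S$ is exactly the divisor whose valuation computes $\mathrm{mld}(V/\mu_m)=1/m$, i.e. the one coming from the smallest mirror charge $q^{\mathsf T}_{n+1} = 1/m$ in the proof of Theorem \ref{kltvariety} — so $S$ appears with log discrepancy $1/m$, hence coefficient $1-\tfrac1m$, which is precisely the coefficient in the pair.

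The main obstacle I expect is step (2)–(3): making the monomial/toric change of variables completely explicit and verifying it is a crepant birational map rather than merely a birational one — i.e. bookkeeping the weights carefully enough to see that no extra discrepancy is introduced along exceptional divisors other than $S$, and that the valuation of $S$ really is the mld-computing valuation of $V/\mu_m$. Concretely this amounts to matching the charge vector of the BHK-mirror of $V$ (which governs the mld via \cite[Theorem 3.1]{Essermld}) with the weight vector $(c_0,\ldots,c_{n+1})$ of the pair $X$; the product formula Proposition \ref{product} (in the cases $k=0$ and $k=1$) is exactly what certifies the needed coprimality $\gcd(c_0,c_{n+1})=1$ and hence that $S$ is irreducible with the stated stabilizer order $m$. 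Once the dictionary "toric automorphism weights of $V$ $\leftrightarrow$ weights $c_j$ of $X$ $\leftrightarrow$ mirror charges of $V$" is set up, crepancy should follow formally from $K_V\sim 0$ and the adjunction/quotient formulas, with the odd case handled by the loop-matrix inverse (Lemma \ref{inverse}) and the even case by the $x_0^2$-plus-loop variant exactly as in the proof of Theorem \ref{kltvariety}.
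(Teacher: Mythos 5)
Your outline has the right ingredients in view (the toric $\mu_m$-action on $V$, a common affine chart, and the log Calabi--Yau property of both sides), but it misses the one idea that makes the paper's proof work, and as written the two hardest steps would not go through. The paper does not compare $X$ and $V/\mu_m$ directly by a monomial change of coordinates. Instead it introduces the index-$1$ cover $W$ of the pair $(X,(1-\frac{1}{m})S)$: the explicit hypersurface obtained from the equation of $X$ by replacing the last monomial $x_{n+1}x_{r+1}$ by $x_{n+1}^{m}x_{r+1}$ (resp.\ $x_{n+1}^{2m}x_{r+1}$ for $n$ even) and the last weight $c_{n+1}=m$ by $1$, with $\mu_m$ acting by weights $(0,\ldots,0,-1)$ so that $W/\mu_m$ \emph{is} the pair. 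On the chart $x_{n+1}=1$ the equations of $W$ and of Esser's $V$ become literally identical (the monomials differing between the two all involve $x_{n+1}$ and coincide after setting $x_{n+1}=1$), so the birational map $W\dashrightarrow V$ is the identity in coordinates; since $W$ and $V$ are boundary-free Calabi--Yau varieties with canonical singularities, this map is automatically crepant, and crepancy descends to the quotients once one defines the $\mu_m$-action on $V$ by $\zeta\cdot x_j=\zeta^{c_j}x_j$ to make the map equivariant. Note also that the relevant chart is $x_{n+1}\neq 0$ (the last variable, which has weight $1$ on both $V$ and $W$), not $x_{r+1}\neq 0$ as you suggest.

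The more serious gap is in your step (3). Knowing that $K_X+(1-\frac{1}{m})S\sim_{\Q}0$, that $K_{V/\mu_m}\sim_{\Q}0$, and that the single divisor $S$ has log discrepancy $1/m$ over $V/\mu_m$ does not make the birational map crepant: crepancy requires that \emph{every} divisorial valuation have the same log discrepancy for the two pairs. The standard negativity-lemma argument that forces this works cleanly for boundary-free birational Calabi--Yau varieties with canonical singularities, but for a pair with nontrivial boundary on one side one must also control all divisors extracted or contracted by the map, which your sketch does not address ("crepancy should follow formally" is exactly the step that is not formal here). Passing to the common index-$1$ cover $W$ is what resolves this: crepancy is verified upstairs between two canonical Calabi--Yau varieties, where it is automatic, and is then transported down through the quotient maps, which are crepant onto $V/\mu_m$ (the action being free in codimension $1$) and onto the pair $(X,(1-\frac{1}{m})S)$ (by the ramification formula for the index-$1$ cover). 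Your proposed use of the mirror-charge computation of $\mld(V/\mu_m)=1/m$ to certify crepancy also inverts the paper's logic --- the paper obtains the mld statement as a corollary of crepancy --- and, even granting it, it only matches the valuation of $S$ and not the remaining exceptional divisors.
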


The proof also gives an explicit formula for the action
of $\mu_m$ on $V$, in terms of the weights $c_j$ of $X$.

\begin{proof}
Let $n\geq 2$. Let $W$ be the index-1 cover of the klt Calabi--Yau
pair $(X,(1-\frac{1}{m})S)$ of dimension $n$
from Theorem \ref{pairmld}. It follows that $W$ has canonical
singularities and $K_W\sim 0$. Explicitly, for $n=2r+1$,
$W$ is the Calabi--Yau hypersurface
\begin{align*}
W&=W_{d}\subset \P^{2r+2}(c_0,\ldots,c_{2r+1},1),\\
0&=x_0^2+x_1^{b_1}x_{2r+1}+\cdots+x_r^{b_r}x_{r+2}
+x_{r+1}^{b_{r+1}}x_r
+\cdots+x_{2r+1}^{b_{2r+1}}x_0
+x_{2r+2}^mx_{r+1}.
\end{align*}
The degree $d$ of $W$ is the same as for $X$, and the exponents $b_j$
and weights $c_j$ are the same except for $j=2r+2$.
The last weight is changed from $c_{2r+2}=m_{2r+1}=m$
to 1. Let the cyclic group $\mu_m$ act on $W$ with weights
$(0,\ldots,0,-1)$; then the quotient $W/\mu_m$ is the pair
$(X,(1-\frac{1}{m})S)$. (The quotient map $W\to X$
is given by $[x_0,\ldots,x_{2r+2}]\mapsto [x_0,\ldots,x_{2r+1},
x_{2r+2}^m]$.)

In even dimensions $n=2r$, the index-1 cover $W$
of $(X,(1-\frac{1}{m})S)$ has a similar
description. Here $W$ is the Calabi--Yau hypersurface
\begin{align*}
W&=W_{d}\subset \P^{2r+1}(c_0,\ldots,c_{2r},1),\\
0&=x_0^2+x_1^3+x_2^{b_2}x_{2r}+\cdots+x_r^{b_r}x_{r+2}
+x_{r+1}^{b_{r+1}}x_r
+\cdots+x_{2r}^{b_{2r}}x_1
+x_{2r+1}^{2m}x_{r+1}.
\end{align*}
Again, the last weight has been changed from $c_{2r+1}=m_{2r}=m$
to 1, and the cyclic group
$\mu_m$ acts on $W$ with weights $(0,\ldots,0,-1)$.

The equations of $W$ and Esser's hypersurface $V$ are the same
in the affine charts $x_{n+1}=1$, and the variable $x_{n+1}$ has weight 1
in both cases. Therefore, we get a birational map
$\varphi\colon W\dashrightarrow V$ by sending
$$[x_0,\ldots,x_n,1]\mapsto [x_0,\ldots,x_n,1].$$
Since $W$ and $V$ are Calabi--Yau varieties with canonical
singularities, $\varphi$ is automatically crepant.

Moreover, we can define an action of $\mu_m$ on $V$
that makes $\varphi$ $\mu_m$-equivariant. 
Indeed, we can rewrite the action of $\mu_m$ on $W$ as
\begin{align*}
\zeta([x_0,\ldots,x_n,x_{n+1}])&=[x_0,\ldots,x_n,\zeta^{-1}x_{n+1}]\\
&=[\zeta^{c_0}x_0,\ldots,\zeta^{c_n}x_n,x_{n+1}].
\end{align*}
That suggests defining an action of $\mu_m$ on $V$ by the formula:
$$\zeta([x_0,\ldots,x_n,x_{n+1}])=[\zeta^{c_0}x_0,\ldots,\zeta^{c_n}x_n,
x_{n+1}].$$
To show that this action preserves the hypersurface $V$, it suffices
to check this in the affine chart $x_{n+1}=1$; but there it is clear,
because the equation of $V$ in this chart is the same as the equation
of $W$ in the corresponding chart.
Given that, it is clear that the map $\varphi$ is $\mu_m$-equivariant.
(It suffices to check this
in the affine chart $x_{n+1}=1$, where the two $\mu_m$-actions
are given by the same formula.)

Therefore, we have a crepant birational map
from $W/\mu_m$ (viewed as a pair, namely $(X,(1-\frac{1}{m}S))$)
to the klt Calabi--Yau variety $V/\mu_m$. In particular, this shows
more explicitly why $V/\mu_m$ has mld $1/m$: because the divisor
$S$ has log discrepancy $1/m$ with respect to $V/\mu_m$.
\end{proof}

\section{Esser-Totaro-Wang's klt Calabi--Yau variety with large index}
\label{largeindexintrosection}

A major problem on boundedness of Calabi--Yau varieties is the Index
Conjecture, which says (in particular) that the index is bounded
among all klt Calabi--Yau varieties of a given dimension.
In \cite[section 7]{ETWCalabi}, the authors and Wang constructed
a klt Calabi--Yau
variety of each dimension $n \geq 2$
which conjecturally has the largest index, roughly $2^{2^n}$
in dimension $n$.  (For example, this variety has index $19$ in dimension
$2$, $493$ in dimension $3$, and $1201495$ in dimension $4$.)
In this section,
we present this example, give a clearer proof of many of its properties,
and provide a conjectural formula for the index.
We reduce the problem of computing the index in a given dimension
to showing that two explicit numbers are relatively prime,
as explained in Proposition \ref{gcdcondition}. That holds
by a computer check in dimensions at most 30.
The expected value of the index is within a constant
factor of the conjecturally largest index in the broader setting
of klt Calabi--Yau pairs with standard coefficients
(Lemma \ref{indexconstant}).

The varieties are again quotients of quasi-smooth Calabi--Yau
hypersurfaces in weighted projective space by finite groups;
the numerology of these hypersurfaces is extremely similar
to that of the small mld examples.  Indeed, for $n = 2r+1$ odd,
the hypersurface $V' \subset \P^{n+1}(a_0',\ldots,a_{n+1}')$
has the form
$$0=x_0^{b_0}x_{2r+2}+x_1^{b_1}x_{2r+1}+\cdots+x_r^{b_r}x_{r+2}
+x_{r+1}^{b_{r+1}}x_r
+\cdots+x_{2r}^{b_{2r}}x_1 + x_{2r+1}^{b'_{2r+1}}x_0
+x_{2r+2}^{v'_{2r+1}}x_{r+1}.$$
In even dimension $n=2r$,
the equation of $V'$ has the form
$$0=x_0^{b_0}+x_1^{b_1}x_{2r+1}+\cdots+x_r^{b_r}x_{r+2}
+x_{r+1}^{b_{r+1}}x_r
+\cdots+x_{2r-1}^{b_{2r-1}}x_2 + x_{2r}^{b'_{2r}}x_1
+x_{2r+1}^{v'_{2r}}x_{r+1}.$$
These equations have the same shape as the small mld examples
and share all the same exponents except for the last two,
$b'_n$ and $v'_n$. We define these last two exponents as follows:
$$\begin{cases}
b'_{2r+1} \coloneqq \frac{1}{2}(1 + b_1 \cdots b_{2r} +
(s_1-1)B_{r+1,r,\ldots,2r,1}) & \text{if }n=2r+1,\\
b'_{2r} \coloneqq \frac{1}{3}(1 + 2(s_1-1)^2 b_2 \cdots b_{2r-1} +
2(s_2-1)B_{r+1,r,\ldots,2r-1,2}) & \text{if }n=2r.
\end{cases}$$
It is straightforward to check that these two 
expressions are integers.  We note the following comparison
between $b'_n$ and $b_n$ and define constants $E = E_n$ for future use:
\begin{equation}
\label{b'comparison}
    \begin{cases}
  E_{2r+1}  \coloneqq  b'_{2r+1} - b_{2r+1} + 1 = \frac{1}{2}(b_1 \cdots b_{2r} + 1) & \text{if }n=2r+1,\\
  E_{2r}  \coloneqq  b'_{2r} - b_{2r} + 1 = \frac{1}{3}(8 b_2 \cdots b_{2r-1} + 1) & \text{if }n=2r.
\end{cases}
\end{equation}
The last exponent is given by
$$v'_{2r+1} \coloneqq B_{1,2r,\ldots,r,r+1} + (s_1-1)[B_{r+1,r,\ldots,2r,1} 
- B_{r,\ldots,2r,1} + \cdots - B_1 + 1]-1$$
if $n = 2r + 1$, and
$$v'_{2r} \coloneqq (s_1-1)^2 B_{2,2r-1,\ldots,r,r+1} + 
(s_2-1)[B_{r+1,r,\ldots,2r-1,2} - B_{r,\ldots,2r-1,2} + \cdots - B_2 + 1] - 1$$
if $n = 2r$.  Using the notation above Proposition
\ref{product}, we may write $v'_n$ more concisely
as $v'_{2r+1} = g_1 + w_1$ for $n = 2r + 1$
and $v'_{2r} = 4g_2 + w_2$ for $n = 2r$.

The weights $a_0',\ldots,a_{n+1}'$ and 
degree $D'$ of $V'$ are uniquely determined by the 
equation of $V'$, given the requirement that 
$\gcd(a_0',\ldots,a_{n+1}') = 1$.  

We set the following additional definitions:
$$u' \coloneqq \begin{cases}u'_{2r+1} = b_1 \cdots b_{2r} + (s_1-1)B_{r+1,r,r+2,r-1,\ldots,2r,1} & \text{if }n=2r+1,\\
u'_{2r} =(s_1-1)b_2 \cdots b_{2r-1} + s_1 B_{r+1,r,r+2,r-1,\ldots,2r-1,2} &\text{if }n=2r.
\end{cases}$$
Notice that $u'_{2r+1} = 2b'_{2r+1}-1$ and $4u'_{2r} = 3 b'_{2r} - 1$.
Finally, define
\begin{equation}
\label{m'definition}
m' \coloneqq \begin{cases} m'_{2r+1} \coloneqq b_0b'_{2r+1}B_{1,2r,\ldots,r,r+1} - b_0 + 1 &\text{if }n=2r+1,\\
m'_{2r} \coloneqq b_1 b'_{2r} B_{2,2r-1,\ldots,r,r+1} - b_1 + 1 &\text{if }n=2r.
\end{cases}
\end{equation}

With the setup in place, we will now prove 
several properties of $V'$.

\begin{theorem}
\label{largeindexintroproperty}
In each dimension $n \geq 2$,
the hypersurface $V' \subset \P(a_0',\ldots,a_{n+1}')$ defined
above is well-formed, quasi-smooth, and Calabi--Yau.  The 
degree $D'$ of $V'$ is given by $D' = u'_{2r+1}$ if
$n = 2r+1$ is odd, and $D' = 2u'_{2r}$ if $n = 2r$ is even.
The last two weights $a_n'$ and $a_{n+1}'$ of $V'$ equal $1$.
There is an action on $V'$ by the cyclic
group of order $m' = m'_n$, which is free in codimension $1$.
\end{theorem}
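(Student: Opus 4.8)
The plan is to rerun the argument of Theorem~\ref{kltvariety} for the modified equation, the one genuinely new feature being that now two weights, rather than one, collapse to $1$. First I would record quasi-smoothness: for $n=2r+1$ the equation of $V'$ is a loop in the $n+2$ variables $x_0,\dots,x_{n+1}$ (all exponents at least $2$), and for $n=2r$ it is the Fermat monomial $x_0^{2}$ in a disjoint variable together with a loop in $x_1,\dots,x_{n+1}$; in either case $V'$ is quasi-smooth by the criterion recalled at the end of section~\ref{notation}. Well-formedness of $Y=\P(a_0',\dots,a_{n+1}')$ is the prime-divisibility argument from the proof of Theorem~\ref{kltvariety}, essentially verbatim: if a prime $p$ divides every weight except $a_i'$, then $p\mid D'$ since some monomial omits $x_i$; for $n$ odd, or for $n$ even with $i\neq 0$, the variable $x_i$ appears to the first power in some monomial, so $p\mid a_i'$, a contradiction; and the case $n=2r$, $i=0$, $p=2$ is excluded because $b_1,\dots,b_{2r},v'_{2r}$ are all odd, so going around the odd-length loop the weights would have to alternate between $0$ and $2$ modulo $4$. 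Hence $Y$ is well-formed, $V'$ is well-formed for $n\geq 3$ by Proposition~\ref{iano} ($D'$ is not a weight: each $x_j$ occurs with exponent at least $2$ in some monomial, so $D'>a_j'$), and the case $n=2$ is checked by hand as in section~\ref{esserintro}.

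Second, the Calabi--Yau property $D'=\sum_j a_j'$. Let $A'$ be the $(n+2)\times(n+2)$ exponent matrix of $V'$ and $q_0',\dots,q_{n+1}'$ its charges, the row sums of $(A')^{-1}$; it suffices to show $\sum_j q_j'=1$, i.e.\ that the entries of $(A')^{-1}$ sum to $1$. Using the explicit inverse of a loop matrix (Lemma~\ref{inverse}), I would expand this sum and collect the terms divisible by $b_{r+1}\cdots b_{2r+1}$ (or $b_{r+1}\cdots b_{2r}$ when $n$ is even), then those divisible by $b_{r+2}\cdots b_{2r+1}$, and so on, exactly as in Lemma~\ref{esser-cy}. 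The bracketed factors telescope: the first is $1$ by the Sylvester identity for $b_0,\dots,b_r$, the factor in the term indexed by $i$ is $b_{r+i}-1$ by the defining recursion for $b_{r+i}$, and the tail contributes $-v_n'$, using the formulas $v'_{2r+1}=g_1+w_1$ and $v'_{2r}=4g_2+w_2$. Hence $\sum_j q_j'=1$, so $K_{V'}=O_{V'}(D'-\sum_j a_j')\sim 0$; with quasi-smoothness and well-formedness this shows $V'$ is a well-formed quasi-smooth Calabi--Yau hypersurface.

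Third, the degree, the last two weights, and the group action. By Lemma~\ref{inverse}, each charge $q_j'$ is a $B$-type alternating sum over the sub-loop obtained by deleting the vertex $x_j$, divided by $|\det A'|$, where $|\det A'|=b_0\cdots b_{2r}\,b'_{2r+1}v'_{2r+1}+1$ for $n$ odd and $2(b_1\cdots b_{2r-1}b'_{2r}v'_{2r}+1)$ for $n$ even. Using the recursion $B_{i_1\cdots i_k}=b_{i_1}B_{i_2\cdots i_k}+(-1)^k$ I would evaluate the numerators of $q_{n+1}'$ and of $q_n'$; combined with Proposition~\ref{product} and the elementary identities $u'_{2r+1}=2b'_{2r+1}-1$ and $4u'_{2r}=3b'_{2r}-1$, this shows both that $|\det A'|=m'_n\cdot\Delta$ and that $q_n'=q_{n+1}'=1/\Delta$, where $\Delta=u'_{2r+1}$ for $n$ odd and $\Delta=2u'_{2r}$ for $n$ even. (Matching the numerator of $q_n'$, which traverses the loop in the opposite sense, with that of $q_{n+1}'$ is where the reversal symmetry of Lemma~\ref{Bsymmetry} enters.) Since all charges of a loop matrix share one denominator \cite[Lemma~7.1]{ETWCalabi}, the degree is $D'=\Delta$, and $a_n'=D'q_n'=1=D'q_{n+1}'=a_{n+1}'$. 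Finally $|\det A'|=D'\,|\Aut_T(V')|$ by \cite[section~3]{ABS}, so $|\Aut_T(V')|=m'_n$ for $n$ odd and $2m'_n$ for $n$ even; since the equation is a loop (resp.\ $x_0^2$ plus a loop), $\Aut_T(V')$ is cyclic (resp.\ $\mu_2$ times a cyclic group) by \cite[Proposition~2, Proposition~3.2]{ABS}, so $V'$ carries a $\mu_{m'}$-action, free in codimension $1$ by \cite[Proposition~7.2]{ETWCalabi} --- with the extra observation, as in the proof of Theorem~\ref{kltvariety}, that for $n$ even the divisor $\{x_0=0\}$ is fixed by no nontrivial element of $\mu_{m'}$, since such an element acts on $x_0$ by a sign and $m'_n$ is odd.

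The main obstacle is the arithmetic of the third step: proving the two distinguished charges are \emph{exactly} $1/D'$, which is what forces the penultimate weight $a_n'$ to equal $1$, the one structural difference from Esser's example. All of this reduces to Proposition~\ref{product} and the small identities relating $u_n'$ to $b_n'$, but carrying out the $B$-telescoping in the correct (reversed) order, and keeping the $n$ even and $n$ odd bookkeeping straight, is the delicate part.
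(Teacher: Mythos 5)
Your overall architecture matches the paper's: quasi-smoothness from the loop shape, well-formedness by the prime-divisibility argument, the Calabi--Yau property via the sum of entries of the inverse exponent matrix, the degree and weights from the charges together with \cite[Lemma 7.1]{ETWCalabi}, and the group action from $|\det(A)|=D'|\Aut_T(V')|$. The well-formedness and group-action portions are essentially the paper's argument.

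However, there is a genuine gap in your Calabi--Yau step. You assert that the expansion ``telescopes'' exactly as in Lemma \ref{esser-cy}, with the tail contributing $-v'_n$. That is false, and the failure of this telescoping is precisely the new computational content of the large-index example. The point is that the equation of $V'$ contains the monomial $x_{2r+1}^{b'_{2r+1}}x_0$ (resp.\ $x_{2r}^{b'_{2r}}x_1$), so the coefficient of $v'_n$ produced by the partial telescoping is $b'_n$, while the defining recursion for the exponents only converts the relevant bracket into $b_n-1$ (not $b'_n-1$); moreover the final alternating sum is a sum of $B'$-terms (with $b'_n$ substituted for $b_n$), which is \emph{not} equal to $v'_n$ --- recall $v'_{2r+1}=g_1+w_1$ is defined by a different expression. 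After the partial telescoping one is left with the residual $E_n\,v'_n=(b'_n-b_n+1)v'_n$, and the actual work is to show this residual equals the modified alternating sum of $B'$-terms; this is equations \eqref{largeindexCYcalcodd} and \eqref{largeindexCYcalceven} in the paper, whose proofs require Proposition \ref{product} with $k=1$ (odd case) and $k=2$ (even case) together with Lemma \ref{Bsymmetry}. Your proposal invokes the formulas $v'_{2r+1}=g_1+w_1$ and $v'_{2r}=4g_2+w_2$ in passing but does not identify, let alone carry out, this residual identity, so the central verification is missing.

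Two smaller remarks. For the penultimate weight $a'_n=1$ you propose a second full charge computation (traversing the loop in the reverse order and invoking Lemma \ref{Bsymmetry}); the paper instead deduces $a'_n=1$ by elementary monomial chasing once $D'$ and $a'_{n+1}=1$ are known --- e.g.\ for $n$ odd, $x_0^2x_{2r+2}$ gives $a'_0=(D'-1)/2$ and then $x_{2r+1}^{b'_{2r+1}}x_0$ gives $a'_{2r+1}=1$ via $u'_{2r+1}=2b'_{2r+1}-1$ --- which is both shorter and avoids the reversal bookkeeping you flag as delicate. Also, in the even case the exponent matrix is block diagonal ($2$ plus a loop), so ``all charges share one denominator'' applies only to the loop block; the paper handles the resulting ambiguity by writing $D'=2\lambda u'_{2r}$ and using well-formedness to force $\lambda=1$, a step your sketch elides.
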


The large index example
will then be the quotient $V'/\mu_{m'}$.
We expect this quotient to have index
$m'$ in every dimension, but this is conditional
on the statement in Proposition \ref{gcdcondition}.

For $n=2$, the hypersurface $V'$ is
$$\{x_0^2+x_1^3x_3+x_2^7x_1+x_3^9x_2=0\}\subset \P^3(5,3,1,1),$$
with an action of $\mu_{19}$. Here $V'/\mu_{19}$
is a klt Calabi--Yau surface of index $19$,
which is the largest possible \cite[Proposition 6.1]{ETWCalabi}.
For $n = 3$, the hypersurface $V'$ is 
$$\{x_0^2 x_4 + x_1^3 x_3 + x_2^5 x_1 + x_3^{19} x_0 + x_4^{32} x_2 = 0\} \subset \P^4(18,12,5,1,1),$$ 
with an action of the cyclic group of order $493$.
For $n = 4$, the hypersurface $V'$ is 
$$\{x_0^2+x_1^3x_5+x_2^7x_4+x_3^{37}x_2+x_4^{1583}x_1+x_5^{2319}x_3 = 0\} \subset \P^5(1187,791,339,55,1,1),$$
with an action of the cyclic group of order $1201495$.

\begin{proof}[Proof of Theorem
\ref{largeindexintroproperty}]

Since the equation
is a loop or $x_0^2$ plus a loop, the hypersurface $V'$
is quasi-smooth.  

The weighted projective space containing $V'$
is well-formed: if a prime number $p$
divides all but one weight $a_i'$, then $p$ divides $D'$ since there
is a monomial not involving $x_i$.  Then,
there is either a monomial of the form
$x_j^{a}x_i$ in the equation for $V'$ or
$n$ is even and $i = 0$.  In the former case,
we get that $p$ divides $a_i'$ too,
contradicting that $\gcd(a_0',\ldots,a_{n+1}')=1$. 
The same holds in the latter case unless $p = 2$.
If $p = 2$ divides all weights except $a_0'$
in the $n$ even case,
then $D' \equiv 2 \pmod 4$.
The remaining exponents $b_1,\ldots,b_{2r-1},b'_{2r},v'_{2r}$
are odd, so this means the weights
$a_i'$ would have to alternate between $0 \pmod 4$
and $2 \pmod 4$ moving around the loop part of
the equation of $V'$.
But the loop has odd length, a contradiction.
It follows that $V'$ is well-formed
by Proposition \ref{iano}, together
with a look at the equation for $V'$ in dimension 2
(above).

\begin{lemma}
\label{etw-cy}
The hypersurface $V'$ is Calabi--Yau, in the sense that $D' = \sum a'_j$.
\end{lemma}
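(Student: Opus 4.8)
The plan is to mimic the proof of Lemma~\ref{esser-cy}, exploiting that $V'$ differs from Esser's hypersurface $V$ only in its last two exponents. Write $A'$ for the $(n+2)\times(n+2)$ exponent matrix of $V'$, and let the charges $q'_0,\ldots,q'_{n+1}$ be the row sums of $(A')^{-1}$, so that the degree $D'$ is their least common denominator and $a'_i = D'q'_i$. Then $V'$ is Calabi--Yau, i.e.\ $D' = \sum_j a'_j$, if and only if $\sum_i q'_i = 1$, equivalently the sum of all entries of $(A')^{-1}$ equals $1$. So it suffices to establish this numerical identity.

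For $n = 2r+1$ the equation of $V'$ is a loop in all $2r+3$ variables (the monomial $x_0^{b_0}x_{2r+2}$ ties $x_0$ into the cycle), so $A'$ is a loop matrix of odd size, and Lemma~\ref{inverse} exhibits $(A')^{-1}$ as an explicit integer matrix divided by $b_0 b_1\cdots b_{2r}b'_{2r+1}v'_{2r+1}+1 = |\det A'|$. Thus one must show that this quantity minus the sum of the entries of that integer matrix vanishes. Following Lemma~\ref{esser-cy}, I would expand the difference by collecting first the terms divisible by $b_{r+1}\cdots b_{2r}b'_{2r+1}v'_{2r+1}$, then those divisible by $b_{r+2}\cdots b_{2r}b'_{2r+1}v'_{2r+1}$, and so on down. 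The Sylvester identity for $b_0,\ldots,b_r$ handles the top block, the inductive definitions of $b_{r+1},\ldots,b_{2r}$ make the middle blocks telescope exactly as before, and the remaining terms should collapse once one feeds in the defining relations for the two modified exponents --- in particular $u'_{2r+1} = 2b'_{2r+1}-1$ and $v'_{2r+1} = g_1 + w_1$ --- together with the relevant case of the product formula Proposition~\ref{product}.

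For $n = 2r$ one has $b_0 = 2$, and the equation of $V'$ is $x_0^2$ plus a loop of odd length in $x_1,\ldots,x_{2r+1}$; hence $(A')^{-1}$ is block-diagonal with scalar block $q'_0 = 1/2$ and an integer matrix over $b_1\cdots b_{2r-1}b'_{2r}v'_{2r}+1$, and the Calabi--Yau condition reduces to the sum of the entries of that integer matrix being $(b_1\cdots b_{2r-1}b'_{2r}v'_{2r}+1)/2$. This telescopes by the same bookkeeping, now using $4u'_{2r} = 3b'_{2r}-1$, $v'_{2r} = 4g_2 + w_2$, and again the appropriate case of Proposition~\ref{product}. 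The principal obstacle is precisely this final bookkeeping: verifying that the chosen definitions of $b'_n$ and $v'_n$ (together with the auxiliary constants $u'_n$, $E_n$, $m'_n$) are exactly the ones for which the telescoping sum closes to $0$, keeping careful track of the steps where the factor $b_0 = 2$ (for $n$ odd), or the denominator $3$ hidden in $b'_{2r}$ (for $n$ even), and the substitution of $b'_n, v'_n$ for $b_n, v_n$, modify the corresponding computation in the proof of Lemma~\ref{esser-cy}.
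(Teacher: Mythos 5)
Your proposal follows essentially the same route as the paper's proof: reduce to the sum of the entries of $(A')^{-1}$ being $1$, invoke Lemma~\ref{inverse}, telescope exactly as in Lemma~\ref{esser-cy}, and close the leftover terms (which amount to $E_n v'_n$ because $v'_n$ gets multiplied by $b_n-1$ rather than $b'_n-1$) using $v'_{2r+1}=g_1+w_1$, resp.\ $v'_{2r}=4g_2+w_2$, together with Proposition~\ref{product} for $k=1$, resp.\ $k=2$ --- precisely the identities the paper feeds in. The only tool you leave unnamed is Lemma~\ref{Bsymmetry}, which the paper uses to reverse the index order in the residual alternating sum before the final cancellation; the bookkeeping you defer does close up as you predict.
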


\begin{proof}
Let $A$ be the $(n+2)\times(n+2)$ matrix encoding the
equation of $V'$.  As in Lemma \ref{esser-cy},
it will be enough to show that the sum of the entries of $A^{-1}$
is $1$.  We use the description of the inverse loop matrix from
Lemma \ref{inverse}.
For $n = 2r + 1$, $A^{-1}$ is an integer matrix divided by
$b_0 \cdots b_{2r} b'_{2r+1} v'_{2r+1} + 1$.  We will
show that $b_0 \cdots b_{2r} b'_{2r+1} v'_{2r+1}$
minus the sum of entries of this integer matrix is $0$.
To do this, collect all terms that are multiples of 
$b_{r+1} \cdots b_{2r} b'_{2r+1} v'_{2r+1}$, then
the remaining terms that are multiples of 
$b_{r+2} \cdots b_{2r} b'_{2r+1} v'_{2r+1}$, and
so on.  The result is:
\begin{multline*}
b_{r+1}\cdots b_{2r}b'_{2r+1}v'_{2r+1}\bigg[b_0\cdots b_r
\bigg( 1-\frac{1}{b_0}-\cdots-\frac{1}{b_r}\bigg)\bigg]\\
-\sum_{i=1}^{r}b_{r+1+i}\cdots b_{2r}b'_{2r+1}v'_{2r+1}
[b_0\cdots b_{r-i}(b_{r+1-i}-1)
B_{r+1,r,\ldots,r-1+i,r+2-i}]\\
- v'_{2r+1}[B_{r+1,r,\ldots,2r,1}] \\ 
-[B'_{r+1,r,\ldots,2r+1,0}-B'_{r,\ldots,2r+1,0}+\cdots-B'_0+1]+1.
\end{multline*}
The notation $B'$
in the last line indicates that we have substituted
$b'_{2r+1}$ for $b_{2r+1}$ where appropriate in the
alternating sums.
We want to show that this whole expression equals $0$.
So far, the grouping of terms is the same as in the
proof of the Calabi--Yau property for the small
mld example with $n = 2r + 1$ in section
\ref{kltvarietysection}.  

The bracketed term on the first line is $1$, while
on the second it is $b_{r+i}-1$ in the $i$th summand.
On the third line, $v'_{2r+1}$ is multiplied by
$b_{2r+1}-1$ rather than $b'_{2r+1}-1$, so the terms
do not telescope quite as before.  Instead, the sum
of all but the last line gives $v'_{2r+1}(b'_{2r+1} - b_{2r+1} + 1)$.
Using \eqref{b'comparison} and also Lemma \ref{Bsymmetry}
on the last line, we have therefore reduced to showing
\begin{equation}
\label{largeindexCYcalcodd}
    \frac{b_1 \cdots b_{2r}+1}{2}v'_{2r+1} = [B'_{0,2r+1,\ldots,r,r+1}-B'_{2r+1,\ldots,r,r+1}+\cdots-B'_{r+1}+1]-1.
\end{equation}
On the left-hand side, we can rewrite $(b_1 \cdots b_{2r}v'_{2r+1})/2$
(leaving behind the remaining $v'_{2r+1}/2$) as follows,
using Proposition \ref{product} for $k = 1$:
\begin{align*}
 \frac{1}{2}b_1 \cdots b_{2r}v'_{2r+1} & = \frac{1}{2}(b_1 \cdots b_{2r}g_1 + b_1 \cdots b_{2r}w_1) \\
    &  = \frac{1}{2}(b_1 \cdots b_{2r}g_1 + (s_1-1)g_1 t_1 - 1) = g_1 \left(b'_{2r+1}-\frac{1}{2}\right) - \frac{1}{2} .
\end{align*}
On the other hand, the right-hand side becomes
\begin{align*}
    & [B'_{0,2r+1,\ldots,r,r+1}-B'_{2r+1,\ldots,r,r+1}+\cdots-B'_{r+1}+1]-1 \\
    & = b'_{2r+1} g_1 - b_0 + 1 + 1 + [B_{1,2r,\ldots,r,r+1} - B_{2r,\ldots,r,r+1} + \cdots - B_{r+1}+1]-1 \\
   &  = b'_{2r+1} g_1 + [B_{1,2r,\ldots,r,r+1} - B_{2r,\ldots,r,r+1} + \cdots - B_{r+1}+1]-1.
\end{align*}
Here we've reverted $B'$ to $B$ in sums where the 
index $2r+1$ does not appear.  We can now cancel the
terms involving $b'_{2r+1}$ from both sides of 
\eqref{largeindexCYcalcodd} and 
apply Lemma \ref{Bsymmetry}
to the bracketed term on the last line. This gives:
$$\frac{v'_{2r+1}}{2} = \frac{1}{2}g_1 + [B_{r+1,r,\ldots,2r,1} - B_{r,\ldots,2r,1} + \cdots - B_1+1] - \frac{1}{2}.$$
After multiplying by $2$, we recover the original definition
for $v'_{2r+1}$, so the proof is complete.

Now we'll show the Calabi--Yau property when $n = 2r$.
Again, we need to demonstrate that the sum
of entries of the matrix
$A^{-1}$ is $1$, where $A^{-1}$ is now a block
diagonal matrix with $1/2$ in the
top left corner followed by a
$(n+1) \times (n+1)$ block which is
$1/(b_1 \cdots b_{2r-1}b'_{2r}v'_{2r}+1)$
times an integer matrix $C$ of inverse loop form
(as in Lemma \ref{inverse}).  Explicitly,
$b_1 \cdots b_{2r-1}b'_{2r}v'_{2r} + 1$
minus $2$ times the sum of the entries of $C$
(which we want to be zero) is
\begin{multline*}
b_{r+1}\cdots b_{2r-1}b'_{2r}v'_{2r}\bigg[b_1\cdots b_r\bigg(
1-2\bigg(\frac{1}{b_1}-\cdots-\frac{1}{b_r}\bigg)\bigg)\bigg]\\
-\sum_{i=1}^{r-1}b_{r+1+i}\cdots b_{2r-1}b'_{2r}v'_{2r}[2b_1\cdots b_{r-i}(b_{r+1-i}-1)
B_{r+1,r,\ldots,r-1+i,r+2-i}]\\
-v'_{2r}[2(b_1-1)B_{r+1,r,\ldots,2r-1,2}] \\
-2[B'_{r+1,r,\ldots,2r,1}-B'_{r,\ldots,2r,1}+\cdots-B'_1+1]+1.
\end{multline*}
So far, the grouping of terms is the same as in the
proof of the Calabi--Yau property for the small
mld example with $n = 2r$ in section
\ref{kltvarietysection}.  The notation $B'$
in the last line indicates that we have substituted
$b'_{2r}$ for $b_{2r}$ where appropriate in the
alternating sums.

The bracketed term on the first line is $1$,
while it is $b_{r+i}-1$ in the $i$th summand
on the second line.  On the third line, 
$v'_{2r}$ is multiplied by $b_{2r}-1$
rather than $b'_{2r}-1$, so the sum does not
completely telescope as before.  Instead, the
sum of all but the last line gives
$v'_{2r}(b'_{2r} - b_{2r} + 1)$.  Using
\eqref{b'comparison} and also Lemma \ref{Bsymmetry}
on the last line, we have therefore reduced to showing
\begin{equation}
\label{largeindexCYcalceven}
 \frac{8 b_2 \cdots b_{2r-1} + 1}{3}v'_{2r} = 2[B'_{1,2r,\ldots,r,r+1}-B'_{2r,\ldots,r,r+1}+\cdots-B'_{r+1}+1]-1.
\end{equation}
On the left-hand side, we can rewrite $(8b_2 \cdots b_{2r-1}v'_{2r})/3$
(leaving behind the remaining $v'_{2r}/3$) as follows,
using Proposition \ref{product} for $k = 2$:
\begin{align*}
 \frac{8}{3}b_1 \cdots b_{2r}v'_{2r} & = \frac{8}{3}(b_1 \cdots b_{2r}4g_2 + b_2 \cdots b_{2r-1}w_2) \\
    &  = \frac{8}{3}(4b_2 \cdots b_{2r-1}g_2 + (s_2-1)g_2 t_2 - 1) = 4g_2 \left(b'_{2r}-\frac{1}{3}\right) - \frac{8}{3}.
\end{align*}
On the other hand, the right-hand side becomes
\begin{align*}
    & 2[B'_{1,2r,\ldots,r,r+1}-B'_{2r,\ldots,r,r+1}+\cdots-B'_{r+1}+1]-1 \\
    & = 4 b'_{2r} g_2 - 2b_1 + 2 + 2 + 2[B_{2,2r-1,\ldots,r,r+1} - B_{2r-1,\ldots,r,r+1} + \cdots - B_{r+1}+1]-1 \\
   &  = 4b'_{2r} g_2 + 2[B_{2,2r-1,\ldots,r,r+1} - B_{2r-1,\ldots,r,r+1} + \cdots - B_{r+1}+1]-3.
\end{align*}
Here we've reverted $B'$ to $B$ in sums where the 
index $2r$ does not appear.  We can now cancel the
terms involving $b'_{2r}$ from both sides of
\eqref{largeindexCYcalceven} and 
apply Lemma \ref{Bsymmetry}
to the bracketed term on the right-hand side.  This gives:
$$\frac{v'_{2r}}{3} = \frac{4}{3}g_2 + 2[B_{r+1,r,\ldots,2r-1,2} - B_{r,\ldots,2r-1,2} + \cdots - B_2+1] - \frac{1}{3}.$$
After multiplying by $3$, we recover the original definition
for $v'_{2r}$.  We've shown that $V'$ is Calabi--Yau
in every dimension. Lemma \ref{etw-cy} is proved.
\end{proof}

By Lemma \ref{etw-cy}, $K_{V'} = 
\mathcal{O}_{V'}(D' - \sum a_j') = \mathcal{O}_{V'}$.
Since the hypersurface $V'$ is klt and has trivial canonical class,
it is canonical.

Next, we'll state and prove expressions for the degree $D'$ of $V'$
and the order $m'$ of a cyclic group action on this hypersurface.
Along the way, we'll also prove that the last two weights $a_n'$
and $a_{n+1}'$ equal $1$. First, we show 
the following identities involving $m'_n$ and $u'_n$:
\begin{equation}
\label{indexidentity}
    \begin{cases} m'_{2r+1} u'_{2r+1} - 1 = b_0\cdots b_{2r}b'_{2r+1}v'_{2r+1} & \text{if }n = 2r+1,\\
2m'_{2r} u'_{2r} - 1 = b_1 \cdots b_{2r-1}b'_{2r} v'_{2r} &\text{if }n=2r.
\end{cases}
\end{equation}
First, when $n = 2r + 1$, 
\begin{multline*}
u'_{2r+1} m'_{2r+1} - 1  
= u'_{2r+1}(b_0b'_{2r+1}B_{1,2r,\ldots,r,r+1} - 1)-1 \\
    = (b_1 \cdots b_{2r} + (s_1-1)B_{r+1,r,\ldots,2r,1})b_0b'_{2r+1}B_{1,2r,\ldots,r,r+1} - u'_{2r+1}-1 \\
    = b_0 \cdots b_{2r} b'_{2r+1} B_{1,2r,\ldots,r,r+1} + b_0b'_{2r+1}(s_1 -1)B_{r+1,r,\ldots,2r,1}B_{1,2r,\ldots,r,r+1} - u_{2r+1}' - 1.
\end{multline*}
We may now apply Proposition \ref{product} with $k = 1$
to replace the expression
$(s_1 -1)B_{r+1,r,\ldots,2r,1}B_{1,2r,\ldots,r,r+1} = (s_1-1)g_1 t_1$
with $b_1 \cdots b_{2r}w_1+1$. Hence
\begin{align*}
    u'_{2r+1} m'_{2r+1} - 1 & = b_0 \cdots b_{2r} b'_{2r+1} (g_1 + w_1) + b_0 b'_{2r+1} - u_{2r+1}' - 1 \\
& = b_0 b_1 \cdots b_{2r} b'_{2r+1} v'_{2r+1} + b_0 b'_{2r+1} - u_{2r+1}' - 1 \\
& = b_0 b_1 \cdots b_{2r} b'_{2r+1} v'_{2r+1}.
\end{align*}
Looking at the form $A^{-1}$ of the loop matrix in
Lemma \ref{inverse}, we may read off the last charge
for the hypersurface $V'$ and apply \eqref{indexidentity} to get:
\begin{align*}
   q_{2r+2} & = \frac{b_0b'_{2r+1}b_1 b_{2r} \cdots b_r b_{r+1} - b_0b'_{2r+1}b_1 b_{2r} \cdots b_r + \cdots + b_0b'_{2r+1} - b_0 + 1}{b_0 b_1 \cdots b_{2r} b'_{2r+1} v'_{2r+1}+1} \\
   & = \frac{m'_{2r+1}}{b_0 b_1 \cdots b_{2r} b'_{2r+1} v'_{2r+1}+1} \\
   & = \frac{1}{u'_{2r+1}}.
\end{align*}
The degree $D'$ of $V'$ is the least common denominator of the charges.
All these denominators are the same for a loop potential,
so $D' = u'_{2r+1}$ and the weight $a'_{2r+2} = 1$.
Since $x_0^2 x_{2r+2}$ is a monomial in the equation for $V'$,
we must have $a'_0 = (u'_{2r+1}-1)/2$.  Then, since
$x_{2r+1}^{b'_{2r+1}}x_0$ is a monomial also,
$$a'_{2r+1} = \frac{u'_{2r+1}-(u'_{2r+1}-1)/2}{b'_{2r+1}} = 1.$$
Let $\mathrm{Aut}_T(V')$ be the group of toric
automorphisms of $V'$.  The order of this group is
related to the degree of the hypersurface and the matrix
$A$ by \cite[section 3]{ABS}
$$|\det(A)| = D'|\mathrm{Aut}_T(V')|.$$
Since $\det(A) = b_0 \cdots b_{2r} b'_{2r+1} v'_{2r+1} + 1$,
the identity \eqref{indexidentity} implies that 
$|\mathrm{Aut}_T(V')| = m_{2r+1}'$.  The equation of $V'$ is 
a loop, so $\mathrm{Aut}_T(V')$ is a cyclic group of order $m' = m'_{2r+1}$.
Since the equation of $V'$ is a loop, the action of $\mu_{m'}$
is free in codimension $1$ \cite[Proposition 7.2]{ETWCalabi}.

Next, if $n = 2r$, then we prove \eqref{indexidentity} as follows:
\begin{multline*}
    2 u'_{2r+1} m'_{2r+1} - 1  = 2u'_{2r+1}(b_1b'_{2r}B_{2,2r-1,\ldots,r,r+1} - 2)-1 \\
     = (2(s_1-1) b_2 \cdots b_{2r-1} + 2s_1 B_{r+1,r,\ldots,2r-1,2})b_1b'_{2r}B_{2,2r-1,\ldots,r,r+1} - 4u'_{2r+1}-1 \\
     = (s_1-1)^2 b_1\cdots b_{2r-1} b'_{2r} B_{2,2r-1,\ldots,r,r+1} \\
     + b_1b'_{2r}(s_2 -1)B_{r+1,r,\ldots,2r-1,2}B_{2,2r-1,\ldots,r,r+1} - 4u_{2r}' - 1.
\end{multline*}
We may now apply Proposition \ref{product} with $k = 2$
to replace the expression $(s_2 -1)B_{r+1,r,\ldots,2r-1,2}B_{2,2r-1,\ldots,r,r+1}
= (s_2-1)g_2 t_2$ with $b_2 \cdots b_{2r-1} w_2 +1$.  Hence
\begin{align*}
    2u'_{2r} m'_{2r} - 1 & = b_1 \cdots b_{2r-1} b'_{2r} (4g_2 + w_2) + b_1 b'_{2r} - 4u'_{2r} - 1 \\
& = b_1 \cdots b_{2r-1} b'_{2r} v'_{2r} + b_1 b'_{2r} - 4u'_{2r} - 1 \\
& = b_1 \cdots b_{2r-1} b'_{2r} v'_{2r}.
\end{align*}
For $n = 2r$, the $(n+2) \times (n+2)$ matrix $A$ expressing 
the equation of $V'$ is block diagonal with $2$ as the 
top left entry and an $(n+1) \times (n+1)$ loop matrix
as the other block.  Therefore, we may read off the last 
charge $q_{2r+1}$ from $A^{-1}$ and apply \eqref{indexidentity} to get:
\begin{align*}
    q_{2r+1} & = \frac{b_1 b'_{2r} b_2 b_{2r-1} \cdots b_r b_{r+1} - b_1 b'_{2r} b_2 b_{2r-1} \cdots b_r + \cdots + b_1 b'_{2r} - b_1 + 1}{b_1 \cdots b_{2r-1} b'_{2r} v'_{2r} + 1} \\
    & = \frac{m'_{2r}}{b_1 \cdots b_{2r-1} b'_{2r} v'_{2r} + 1} \\
    & = \frac{1}{2u'_{2r}}
\end{align*}
Therefore, the degree $D'$ is a multiple of $2u'_{2r}$, 
say $D' = 2\lambda u'_{2r}$, so the last weight is $a'_{n+1} = \lambda$.
Following the monomials around the loop, it follows that
every weight $a'_i$ with $i \neq 0$ is a multiple of $\lambda$.
This contradicts the fact that the weighted projective space
containing $V'$ is well-formed, so in fact we have
$D' = 2u'_{2r}$ and $a'_{n+1} = 1$.  Since $x_1^3 x_{2r+1}$
is a monomial of $V'$, we must have $a'_1 = (D'-1)/3 = (2u'_{2r}-1)/3$.
Then, since $x_{2r}^{b'_{2r}}x_1$ is a monomial also,
$$a'_{2r} = \frac{2u'_{2r}-(2u'_{2r}-1)/3}{b'_{2r}} = 1.$$
The determinant of the matrix $A$ in this case is
$$|\det(A)| = 2(b_1 \cdots b_{2r-1}b'_{2r}v'_{2r} + 1)$$
and $|\det(A)| = D' |\mathrm{Aut}_T(V')|$ so
\eqref{indexidentity} yields $|\mathrm{Aut}_T(V')| = 2m'_{2r}$.
Because the equation is of the form $x_0^2$ plus a loop,
$\mathrm{Aut}_T(V') \cong \mu_2 \times \mu_{m'}$ with 
$m' = m'_{2r}$.  Since the shape of the equation is
the same as in the mld example, the same argument from 
the proof of Theorem \ref{kltvariety} shows that the 
$\mu_{m'}$-action is free in codimension $1$.

This completes the proof of all the properties listed
in Theorem \ref{largeindexintroproperty}.
\end{proof}

Since the action of $\mu_{m'}$ on $V'$ is 
free in codimension $1$, the quotient 
$V'/\mu_{m'}$ is a klt Calabi--Yau variety.
The index of the quotient is determined by the 
induced action of $\mu_{m'}$ on $H^0(V',K_{V'}) \cong \C$.
In particular, the index of the quotient is $m'$
if and only if this action is faithful. This in turn
is equivalent to a concrete condition involving the
exponents.

\begin{proposition}
    \label{gcdcondition}
The quotient $V'/\mu_{m'}$ defined above has index $m' = m'_n$
in dimension $n$ if and only if $\gcd(m'_n,E_n) = 1$.
\end{proposition}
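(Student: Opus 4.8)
The plan is to make the induced action of $\mu_{m'}$ on the one-dimensional space $H^0(V',K_{V'})$ completely explicit, and to show that the corresponding character $\chi\colon\mu_{m'}\to\C^*$ has kernel of order exactly $\gcd(m'_n,E_n)$. Since $\mu_{m'}$ acts freely in codimension $1$ on $V'$ and $K_{V'}\sim 0$, the quotient map $\pi\colon V'\to V'/\mu_{m'}$ is étale in codimension $1$, so $K_{V'}=\pi^*K_{V'/\mu_{m'}}$ and the index of $V'/\mu_{m'}$ equals $|\chi(\mu_{m'})|=m'/|\ker\chi|$, as recalled just above; so everything reduces to identifying $|\ker\chi|$.

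First I would put the $\mu_{m'}$-action in diagonal form: a generator $\zeta$ acts by $[x_0:\cdots:x_{n+1}]\mapsto[\eta^{c_0}x_0:\cdots:\eta^{c_{n+1}}x_{n+1}]$ for a primitive $m'$-th root of unity $\eta$ and integers $c_i$, well defined modulo $m'$ up to the shift $c_i\mapsto c_i+ua_i'$ from rescaling. They are characterized by the requirement that $\zeta$ multiply every monomial of the equation of $V'$ by one and the same root of unity $\eta^\delta$; this gives one linear congruence modulo $m'$ per monomial, i.e.\ a cyclic linear recurrence around the loop (with $v'_n$ playing the role of the last exponent), together with the extra relation $2c_0\equiv\delta\pmod{m'}$ in even dimensions from the monomial $x_0^2$, which after the natural normalization---in which $\mu_{m'}$ fixes $x_0$, the sign change of $x_0$ being the separate $\mu_2$ factor of $\mathrm{Aut}_T(V')\cong\mu_2\times\mu_{m'}$---forces $\delta\equiv 0$. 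Solving this recurrence by the same descending telescoping used for the Calabi--Yau property above expresses each $c_i$ as an alternating sum of the type $B_{\dots}$, in the spirit of the explicit action written out in Lemma \ref{crepant} for the small-mld example.

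Next, using the Poincaré residue $\omega=\mathrm{Res}(\Omega/f)$ as generator of $H^0(V',K_{V'})$, where $\Omega=\sum_i(-1)^ia_i'x_i\,dx_0\wedge\cdots\wedge\widehat{dx_i}\wedge\cdots\wedge dx_{n+1}$, one reads off $\chi(\zeta)=\eta^e$ with $e\equiv\bigl(\sum_i c_i\bigr)-\delta\pmod{m'}$, since $\Omega$ is scaled by $\eta^{\sum_i c_i}$ and $f$ by $\eta^\delta$. It then remains to prove $\gcd(m'_n,e)=\gcd(m'_n,E_n)$, which I would deduce from the sharper claim that $e$ is a unit multiple of $E_n$ modulo $m'_n$. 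Substituting the closed forms for the $c_i$ and using the symmetry Lemma \ref{Bsymmetry}, the alternating sum $\sum_i c_i-\delta$ collapses; the product formula Proposition \ref{product}---the identity already used in \eqref{indexidentity} to pin down $D'$ and $m'$---together with the comparison $b'_n-b_n+1=E_n$ from \eqref{b'comparison} rewrites the result as a multiple of $E_n$, and the cofactor comes out prime to $m'_n$ by the same identities \eqref{indexidentity} that exhibit $u'_n$ and $v'_n$ as units modulo $m'_n$. Hence the index of $V'/\mu_{m'}$ is $m'/\gcd(m',e)=m'/\gcd(m',E_n)$, which is $m'$ precisely when $\gcd(m'_n,E_n)=1$.

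The main obstacle is this middle computation: solving the loop recurrence for the $c_i$ in closed form and then collapsing $\sum_i c_i-\delta$ modulo $m'_n$ to a controlled multiple of $E_n$. It is bookkeeping of the same flavor as the Calabi--Yau verifications (Lemmas \ref{esser-cy} and \ref{etw-cy}) but with one extra telescoping series to carry along, and it requires the usual care in even dimensions, where $\mu_{m'}$ is only the cyclic factor of $\mathrm{Aut}_T(V')$ and the monomial $x_0^2$ fixes both the normalization and the value $\delta\equiv 0$. (Alternatively one could compute $|\ker\chi|$ via the transpose of the loop matrix, that is, the Berglund--Hübsch--Krawitz mirror of $V'$, but the direct route stays inside the alternating-sum calculus already in place.)
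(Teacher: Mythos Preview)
Your outline is correct and the approach would work, but the paper takes precisely the route you mention last and set aside: it computes the index via the Berglund--H\"ubsch--Krawitz mirror. Concretely, the paper invokes \cite[Proposition 7.3]{ETWCalabi}, which says that $\mathrm{Aut}_T(V')$ acts faithfully on $H^0(V',K_{V'})$ if and only if the degree $D^{\mathsf T}$ of the mirror hypersurface equals $|\det A|/D'$; this packages your entire Poincar\'e-residue calculation of $\chi$ into a black box. One then needs only the single mirror charge $q^{\mathsf T}_{n+1}$, and Lemma \ref{inverse} applied to $A^{\mathsf T}$ yields $q^{\mathsf T}_{n+1}=E_n/m'_n$ (respectively $E_n/(2m'_n)$ in even dimension) after a short manipulation using only the identity $2b'_{2r+1}-1=u'_{2r+1}$ (respectively $3b'_{2r}-1=4u'_{2r}$) together with the definition of $u'_n$. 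The loop shape of the equation then forces the mirror degree to equal $m'_n$ (respectively $2m'_n$) exactly when this fraction is already in lowest terms, i.e.\ when $\gcd(m'_n,E_n)=1$. Your direct method is in effect re-deriving the cited Proposition 7.3 for this specific $V'$: it is more self-contained and stays within the alternating-sum calculus, but it requires actually carrying out the ``main obstacle'' you flag---solving for all the $c_i$ and collapsing $\sum_i c_i-\delta$ to a unit multiple of $E_n$---whereas the mirror route replaces that whole step with reading off one column sum of $A^{-1}$ and never touches Proposition \ref{product} or Lemma \ref{Bsymmetry}.
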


Recall that the constants $m'$ and $A$ were defined in
\eqref{m'definition} and\eqref{b'comparison}, respectively.

\begin{proof}
Suppose that $n = 2r+1$ is odd.  The determinant of the
matrix $A$ encoding the equation of $V'$ is $u'_{2r+1} m'_{2r+1}$
by \eqref{indexidentity}. The degree of the mirror
hypersurface of $V'$ always divides 
$u'_{2r+1} m'_{2r+1}/u'_{2r+1} = m'_{2r+1}$.
By \cite[Proposition 7.3]{ETWCalabi},
the action of $\mu_{m'}$ on $H^0(V',K_{V'})$ is faithful
if and only if the mirror degree actually equals $m'$.
This degree is the 
least common denominator of the mirror charges of $V'$,
which are the sums of columns of $A^{-1}$.

We may use Lemma \ref{inverse} to write the smallest mirror
charge as
\begin{align*}
    q^{\mathsf{T}}_{2r+2} & = \frac{b_{r+1}b_r \cdots b_{2r}b_1 b'_{2r+1} b_0 - b_{r+1}b_r \cdots b_{2r}b_1 b'_{2r+1} + \cdots - b_{r+1} + 1}{u'_{2r+1}m'_{2r+1}} \\
    & = \frac{b'_{2r+1} b_1 \cdots b_{2r} + B_{r+1,r,\ldots,2r,1}}{u'_{2r+1}m'_{2r+1}} \\
    & = \frac{\frac{1}{2}((2b'_{2r+1}-1)b_1 \cdots b_{2r} + b_1 \cdots b_{2r} + 2B_{r+1,r,\ldots,2r,1})}{u'_{2r+1}m'_{2r+1}} \\
    & = \frac{\frac{1}{2}(b_1 \cdots b_{2r} + 1)u'_{2r+1}}{u'_{2r+1}m'_{2r+1}} \\
    & = \frac{E_{2r+1}}{m'_{2r+1}}.
\end{align*}
If $\gcd(E_{2r+1},m'_{2r+1}) = 1$, then this proves
that the mirror degree is $m'_{2r+1}$.  Conversely,
if some prime $p$ divides $E_{2r+1}$ and $m'_{2r+1}$,
and the mirror degree were $m'_{2r+1}$, by following 
the loop potential, we'd have that $p$ divides every
weight of the mirror, a contradiction.

The same reasoning holds when $n = 2r$.  In that case,
$\mathrm{Aut}_T(V') \cong \mu_2 \times \mu_{m'}$.
The $\mu_2$-action (which sends $x_0 \mapsto -x_0$
and leaves the other variables unchanged) is faithful
on $H^0(V',K_{V'})$, so the $\mu_{m'}$-action is 
faithful if and only if the entire group $\mathrm{Aut}_T(V')$
acts faithfully. 
The determinant of the matrix $A$ encoding the equation
of $V'$ is $4m'_{2r}u'_{2r}$.
Using \cite[Proposition 7.3]{ETWCalabi} again,
$\mathrm{Aut}_T(V')$ acts faithfully
on $H^0(V',K_{V'})$ if and only if 
$2 u_{2r} D^{\mathsf{T}} =4 m'_{2r}u'_{2r}$,
where $D^{\mathsf{T}}$ is the mirror degree.
This reduces to $D^{\mathsf{T}} = 2m'_{2r}$.
We compute the smallest mirror charge as
\begin{align*}
    q^{\mathsf{T}}_{2r+1} & = \frac{b_{r+1}b_r \cdots b_{2r-1}b_2 b'_{2r} b_1 - b_{r+1}b_r \cdots b_{2r-1}b_2 b'_{2r} + \cdots - b_{r+1} + 1}{2u'_{2r}m'_{2r}} \\
    & = \frac{(s_1-1)b'_{2r} b_2 \cdots b_{2r-1} + B_{r+1,r,\ldots,2r-1,2}}{2u'_{2r}m'_{2r}} \\
    & = \frac{\frac{1}{3}((3b'_{2r}-1)(s_1-1)b_2 \cdots b_{2r-1} + (s_1-1)b_2 \cdots b_{2r-1} + 3B_{r+1,r,\ldots,2r-1,2})}{2u'_{2r}m'_{2r}} \\
    & = \frac{\frac{1}{3}(8b_2 \cdots b_{2r-1} + 1)u'_{2r}}{2u'_{2r}m'_{2r}} \\
    & = \frac{E_{2r}}{2m'_{2r}}.
\end{align*}
The constant $E_{2r}$ is odd, so if $\gcd(E_{2r},m'_{2r}) = 1$,
then the mirror degree is $2m'_{2r}$, as required.  Conversely,
if a prime $p$ (which must be odd)
divides both $E_{2r}$ and $m'_{2r}$, and the
mirror degree were $2m'_{2r}$, then the form of the equation
of $V'$ implies that all mirror weights would be divisible by
$p$, a contradiction.
\end{proof}

\begin{conjecture}
\label{conj-index}
For each integer $n\geq 2$, the numbers
$m'_n$ and $E_n$ defined above are relatively prime.
\end{conjecture}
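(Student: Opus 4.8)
The plan is to prove $\gcd(m'_n,E_n)=1$ by a prime-by-prime analysis. By Proposition \ref{gcdcondition} this is equivalent to the assertion that $V'/\mu_{m'}$ has index exactly $m'_n$, i.e.\ that the $\mu_{m'}$-action on $H^0(V',K_{V'})$ is faithful; in contrast to the identities \eqref{indexidentity}, which were pure algebra, this faithfulness introduces the genuinely new quantity $E_n$. At present the statement is known only by direct computation, for $n\le 30$, and a general proof would have to exhibit a structural reason that no prime can divide both $m'_n$ and $E_n$.

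First I would reduce the conjecture to a cleaner coprimality. Recall $b_0=s_0=2$, $b_1=s_1=3$, and note (from the definition of the exponents $b_{r+i}$) that $b_{2r+1}-1=t_1$ and $b_{2r}-1=4t_2$, so that \eqref{b'comparison} gives the closed forms $b'_{2r+1}=E_{2r+1}+t_1$ and $b'_{2r}=E_{2r}+4t_2$. For $n=2r+1$ odd we then have $m'_{2r+1}=2b'_{2r+1}g_1-1=2E_{2r+1}g_1+(2t_1g_1-1)$, and feeding in Proposition \ref{product} for $k=1$, namely $2g_1t_1-1=b_1\cdots b_{2r}\,w_1=(2E_{2r+1}-1)w_1$, this collapses to the identity $m'_{2r+1}=2E_{2r+1}\,v'_{2r+1}-w_1$, using $v'_{2r+1}=g_1+w_1$. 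Hence $\gcd(m'_{2r+1},E_{2r+1})=\gcd(w_1,E_{2r+1})$. The even case is parallel: using Proposition \ref{product} for $k=2$ one obtains $4m'_{2r}=3E_{2r}\,v'_{2r}-w_2$, and since $E_{2r}$ is odd this yields $\gcd(m'_{2r},E_{2r})=\gcd(w_2,E_{2r})$. So it would suffice to show that $E_n$ is coprime to $w_1$ when $n$ is odd, and to $w_2$ when $n$ is even.

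The remaining step is the main obstacle, and it is essentially the full content of the conjecture. If a prime $p$ divided both $E_{2r+1}$ and $w_1$, then reducing the known identities modulo $p$ gives $b_1\cdots b_{2r}\equiv-1$, $b'_{2r+1}\equiv t_1$, $v'_{2r+1}\equiv g_1$, and $2g_1t_1\equiv1$, and one would want to push these congruences through the recursions defining the exponents $b_{r+i}$ and the alternating sums $B_{i_1\cdots i_k}$ until a contradiction appears. The difficulty is that $w_1$ and $E_{2r+1}$ are built from deeply iterated alternating sums of the Sylvester-derived exponents, and — unlike the small-mld situation, where the analogous coprimality is a formal consequence of Proposition \ref{product} with $k=0$ — the new term $E_n$ does not appear to be governed by any further identity of this type. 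We therefore leave the general case open, having verified it by machine for all $n\le30$.
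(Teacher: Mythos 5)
The statement you were asked about is a \emph{conjecture}: the paper offers no proof of it, only a computer verification for $n\le 30$, and your proposal honestly ends in the same place, so there is no gap to fault you for --- but neither is there a proof. What you add beyond the paper is the reduction $\gcd(m'_n,E_n)=\gcd(w_1,E_n)$ for $n=2r+1$ and $\gcd(m'_n,E_n)=\gcd(w_2,E_n)$ for $n=2r$, and I checked that this is correct: since $b_0=2$ and $b_1=3$, the defining recursion gives $b_{2r+1}-1=t_1$ and $b_{2r}-1=4t_2$, hence $b'_{2r+1}=E_{2r+1}+t_1$ and $b'_{2r}=E_{2r}+4t_2$; combining $m'_{2r+1}=2b'_{2r+1}g_1-1$ with Proposition \ref{product} at $k=1$ and $b_1\cdots b_{2r}=2E_{2r+1}-1$ yields $m'_{2r+1}=2E_{2r+1}v'_{2r+1}-w_1$, and the parallel computation with $k=2$ yields $4m'_{2r}=3E_{2r}v'_{2r}-w_2$, with $E_{2r}$ odd. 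This is a genuine, if modest, simplification of the target (it strips out the factors already controlled by Proposition \ref{product}), and your diagnosis of why the method stalls is accurate: the small-mld coprimality statements in the paper all follow formally from the product identities, whereas $E_n$ (equivalently $w_1$ or $w_2$) is not tied to $m'_n$ by any further identity of that kind, so a prime-by-prime argument has nothing structural to grip. The conjecture remains open for $n>30$, exactly as in the paper.
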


By Proposition \ref{gcdcondition}, Conjecture \ref{conj-index}
is equivalent to the klt Calabi--Yau variety $V'/\mu_{m'}$
of dimension $n$ having index equal to $m' = m'_n$.
By computer calculation, the conjecture
holds in dimensions at most $30$.

\section{Asymptotics of the mld and index}
\label{asymptotics_section}

We show in this section that our klt Calabi--Yau varieties
of small mld or large index are within a constant factor
of the conjecturally optimal examples in the greater generality
of klt pairs with standard coefficients.

First, building on examples by Koll\'ar,
Jihao Liu constructed a klt Calabi--Yau pair of dimension $n$
with standard coefficients whose mld is $1/(s_{n+1}-1)$
\cite[Remark 2.6]{Liu}. That is conjectured to be the smallest
possible mld in this setting. Namely, Liu's pair is
\begin{equation}
(X,D) = \bigg( \P^n,\frac{1}{2}H_0+\frac{2}{3}H_1
+\frac{6}{7}H_2+\cdots+\frac{s_n-1}{s_n}H_n
+\frac{s_{n+1}-2}{s_{n+1}-1}H_{n+1}\bigg),
\end{equation}
where $H_0,\ldots,H_{n+1}$ are $n+2$ general hyperplanes in $\P^n$.

In the narrower setting
of klt Calabi--Yau varieties, it turns out that Esser's example
has mld less than 6 times $1/(s_{n+1}-1)$ in odd dimensions,
and less than 23 times this number in even dimensions.
That is extremely
close for such small numbers, and it supports the conjecture
that Esser's example has the smallest mld among klt Calabi--Yau varieties.

There is a parallel story for the problem of large index.
Wang and the authors constructed a klt Calabi--Yau pair
of dimension $n$ with standard
coefficients whose index is $(s_n-1)(2s_n-3)$
\cite[Theorem 3.3]{ETWCalabi}. That is conjectured to be the largest
possible index in this setting. We now show that the 
conjectural value for the index of the klt Calabi--Yau
variety of section \ref{largeindexintrosection} is within a constant
factor of that number (Lemma \ref{indexconstant}).

More precisely, define a constant
\begin{align*}
\alpha& \coloneqq 2\prod_{j=1}^{\infty}
\bigg[\frac{s_{j+1}}{(s_j-1)^2}\bigg]^{2^{j-1}}\\
&\doteq 5.522868.
\end{align*}
The convergence of this product is easy from the doubly exponential growth
of the Sylvester numbers $s_j$ and the fact that $s_{j+1}=(s_j-1)^2+s_j$.

\begin{lemma}
\label{constant}
For each integer $n\geq 2$, let $1/m_n$ be the mld of Esser's klt
Calabi--Yau variety of dimension $n$
(computed in Theorem \ref{kltvariety}). Then
$$\frac{1}{m_n}\leq \alpha \frac{1}{s_{n+1}-1}$$
if $n$ is even and
$$\frac{1}{m_n}\leq\frac{3\alpha^2}{4}\frac{1}{s_{n+1}-1}$$
if $n$ is odd.
\end{lemma}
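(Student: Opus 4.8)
The plan is to reduce the bound on $1/m_n$ to an estimate for a product of ratios $s_j/b_j$ (where $b_j$ are Esser's exponents from section \ref{esserintro}), and then to compare that finite product with the partial products of the infinite product defining $\alpha$. First I would use that $m_n$ is the alternating sum $B_{0,2r+1,1,2r,\ldots,r,r+1}$ for $n=2r+1$, and $B_{1,2r,\ldots,r,r+1}$ for $n=2r$; the terms of such an alternating sum decrease by successive factors $b_{r+1},b_r,b_{r+2},\ldots$, so
$$L_n\Big(1-\tfrac{1}{b_{r+1}}\Big)\le m_n\le L_n,\qquad L_{2r+1}=b_0\cdots b_{2r+1},\quad L_{2r}=b_1\cdots b_{2r}.$$
Since $s_{n+1}-1=s_0\cdots s_n$, $b_j=s_j$ for $j\le r$, and $s_0=b_0=2$, dividing gives $(s_{n+1}-1)/L_n=\prod_{j=r+1}^{n}s_j/b_j$ when $n$ is odd and $2\prod_{j=r+1}^{n}s_j/b_j$ when $n$ is even. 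So it suffices to prove
$$2\prod_{j=r+1}^{2r}\tfrac{s_j}{b_j}\le\alpha\Big(1-\tfrac1{b_{r+1}}\Big)\quad(n=2r),\qquad \prod_{j=r+1}^{2r+1}\tfrac{s_j}{b_j}\le\tfrac{3\alpha^2}{4}\Big(1-\tfrac1{b_{r+1}}\Big)\quad(n=2r+1).$$

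I would deduce the odd case from the even one by splitting off the last factor, $\prod_{j=r+1}^{2r+1}s_j/b_j=\big(\prod_{j=r+1}^{2r}s_j/b_j\big)\cdot s_{2r+1}/b_{2r+1}$; here $b_{r+1},\ldots,b_{2r}$ (hence the first factor) are literally the exponents of dimension $2r$, so the even bound gives $\prod_{j=r+1}^{2r}s_j/b_j\le\tfrac\alpha2(1-1/b_{r+1})$. For the last factor, use $b_{2r+1}=u_{2r}+1$: the alternating-sum shape of $u_{2r}=B_{r+1,r,\ldots,2r,1}$ (last index $1$, first correction factor $1-\tfrac13$) gives $u_{2r}\ge\tfrac23 b_1\cdots b_{2r}\ge\tfrac23 m_{2r}$, while the even bound reads $s_{2r+1}-1\le\alpha m_{2r}$. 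Combining, $s_{2r+1}\le\tfrac{3\alpha}{2}u_{2r}+1\le\tfrac{3\alpha}{2}b_{2r+1}$, so $s_{2r+1}/b_{2r+1}\le\tfrac{3\alpha}{2}$, and multiplying with the even bound yields exactly $\tfrac{3\alpha^2}{4}(1-1/b_{r+1})$.

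The remaining task, and the crux, is the even inequality $2\prod_{j=r+1}^{2r}s_j/b_j\le\alpha(1-1/b_{r+1})$. I would prove this by induction on $r$, expanding each exponent with the identity $b_{r+i}-1=(s_{r+1-i}-1)^2\,t_{r+2-i}$ (from the proof of Lemma \ref{qk}), where $(1-1/b_k)\prod_{\ell=k}^{2r+1-k}b_\ell\le t_k\le\prod_{\ell=k}^{2r+1-k}b_\ell$, together with $s_{j+1}=(s_j-1)^2+s_j$, which gives $s_{r+i}=(s_r-1)^{2^i}(1+O(1/s_r))$. Unwinding these recursions rewrites $\prod_{j=r+1}^{2r}s_j/b_j$ as the truncation $\prod_{j=1}^{r}\big[s_{j+1}/(s_j-1)^2\big]^{2^{j-1}}$ of $\alpha/2$, times a bounded correction. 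The decisive difficulty — and the main obstacle — is that this comparison is \emph{not} term-by-term: a single ratio $s_{r+i}/b_{r+i}$ can exceed its matched factor $[s_{j+1}/(s_j-1)^2]^{2^{j-1}}$ while the whole product still stays below $\alpha/2$. So the induction must carry an auxiliary inequality (for instance a bound on a partial product $\prod_{j=r+1}^{2r-1}s_j/b_j$, or a direct bound for $b_{r+i}$ in terms of $s_0,\ldots,s_r$), and it must track every correction of size $1\pm O(1/s_k)$, since the constant $\alpha$ is attained only in the limit $r\to\infty$ and the room to spare after the $(1-1/b_{r+1})$ factor shrinks to zero. The small cases are checked by direct computation.
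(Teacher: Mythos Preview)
Your reduction is sound: the alternating-sum estimate $m_n\ge L_n(1-1/b_{r+1})$ and the rewriting $(s_{n+1}-1)/L_n$ as a product of ratios $s_j/b_j$ are exactly the right first moves, and your derivation of the odd bound from the even one via $b_{2r+1}=u_{2r}+1$ and $u_{2r}\ge\frac23 b_1\cdots b_{2r}$ is correct and rather elegant (the paper does the two parities independently, so this is a genuine simplification on that side).

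The gap is in the even case, which you yourself flag as ``the crux'' and leave unfinished. Your diagnosis that the comparison ``is not term-by-term'' is what leads you astray: you propose induction on $r$ with an unspecified auxiliary hypothesis and careful tracking of $1\pm O(1/s_k)$ corrections, none of which you carry out. The paper avoids all of this by proving, for \emph{fixed} $r$, an explicit lower bound on each individual exponent by induction on $a$:
\[
b_{r+a}\;\ge\;s_{r+a}\cdot\frac{(s_{r+1-a}-1)^2}{s_{r+2-a}}
\bigg[\frac{(s_{r+2-a}-1)^2}{s_{r+3-a}}\bigg]^{2^{0}}
\cdots
\bigg[\frac{(s_r-1)^2}{s_{r+1}}\bigg]^{2^{a-2}},
\]
starting from $b_{r+1}-1=(s_r-1)^2=s_{r+1}\cdot\frac{(s_r-1)^2}{s_{r+1}}$ and using $b_{r+a}-1=(s_{r+1-a}-1)^2\,t_{r+2-a}$ together with $t_{r+2-a}\ge b_{r+2-a}\cdots b_{r+a-1}(b_{r+a}-1)$ for the step. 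So each ratio $s_{r+a}/b_{r+a}$ \emph{is} bounded by a concrete product of factors $s_{j+1}/(s_j-1)^2$; the point you were missing is that one ratio is matched not with a single factor of $\alpha$ but with several, and when you multiply over $a=1,\ldots,r$ the exponents of $\big[s_{j+1}/(s_j-1)^2\big]$ telescope to exactly $2^{j-1}$. That gives
\[
m_{2r}\;\ge\; b_1\cdots b_r(b_{r+1}-1)b_{r+2}\cdots b_{2r}
\;\ge\; s_1\cdots s_{2r}\prod_{j=1}^{r}\bigg[\frac{(s_j-1)^2}{s_{j+1}}\bigg]^{2^{j-1}}
\;\ge\;\frac{s_{2r+1}-1}{\alpha}
\]
directly, with no induction on $r$, no auxiliary inequality to carry, no small cases, and no error terms to track. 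The ``direct bound for $b_{r+i}$'' you mention parenthetically as one possible auxiliary is in fact the whole argument.
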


The ratio $(s_{n+1}-1)/m_n$ actually converges to $\alpha$ for $n$ even
and to $3\alpha^2/4\doteq 22.876556$ for $n$ odd, as $n$ goes
to infinity; but we will not need that.

\begin{proof}
(Lemma \ref{constant})
Let $r$ be a positive integer, and let $n$ be $2r$ or $2r+1$.
Let $b_0,\ldots,b_{n+2}$ be the exponents of Esser's example,
listed in section \ref{esserintro}. Then $b_a$ is equal
to the Sylvester number $s_a$ for $a\leq r$. Therefore,
\begin{align*}
b_{r+1}-1&=(b_r-1)^2\\
&=s_{r+1}\frac{(s_r-1)^2}{s_{r+1}}.
\end{align*}
By induction on $1\leq a\leq r$, it follows that
$$b_{r+a}\geq s_{r+a}\frac{(s_{r+1-a}-1)^2}{s_{r+2-a}}
\bigg[\frac{(s_{r+2-a}-1)^2}{s_{r+3-a}}\bigg]^{2^{0}}
\cdots \bigg[\frac{(s_r-1)^2}{s_{r+1}}\bigg]^{2^{a-2}}.$$
As a result, we have
\begin{align*}
m_{2r}&=B_{1,2r,\ldots,r,r+1}\\
&=b_1b_{2r}\cdots b_rb_{r+1}-b_1b_{2r}\cdots b_r+\cdots\\
&\geq b_1\cdots b_{r}(b_{r+1}-1)b_{r+2}\cdots b_{2r}\\
&\geq s_1\cdots s_{2r}\bigg[\frac{(s_{1}-1)^2}{s_{2}}\bigg]^{2^{0}}
\cdots \bigg[\frac{(s_r-1)^2}{s_{r+1}}\bigg]^{2^{r-1}}\\
&= (s_{2r+1}-1)\frac{1}{2}\bigg[\frac{(s_{1}-1)^2}{s_{2}}\bigg]^{2^{0}}
\cdots \bigg[\frac{(s_r-1)^2}{s_{r+1}}\bigg]^{2^{r-1}}\\
&\geq \frac{1}{\alpha}(s_{2r+1}-1).
\end{align*}
The proof for $n=2r+1$ is similar. Here $m_{2r+1}=B_{0,2r+1,\ldots,
r,r+1}\geq b_0\cdots b_r(b_{r+1}-1)b_{r+2}\cdots b_{2r+1}$.
The lower bound for $b_{r+a}$ above holds (by induction)
for all $1\leq a\leq r+1$. We deduce that
\begin{align*}
m_{2r+1}&\geq s_0\cdots s_{2r+1}\frac{(s_{0}-1)^2}{s_{1}}
\bigg[\frac{(s_{1}-1)^2}{s_{2}}\bigg]^{2^{1}}
\cdots \bigg[\frac{(s_r-1)^2}{s_{r+1}}\bigg]^{2^{r}}\\
&=(s_{2r+2}-1)\frac{1}{3}
\bigg[\frac{(s_{1}-1)^2}{s_{2}}\bigg]^{2^{1}}
\cdots \bigg[\frac{(s_r-1)^2}{s_{r+1}}\bigg]^{2^{r}}\\
&\geq \frac{4}{3\alpha^2}(s_{2r+2}-1).
\end{align*}
\end{proof}

\begin{lemma}
\label{indexconstant}
For each integer $n\geq 2$, let $m'_n$ be the 
conjectural index of Esser-Totaro-Wang's klt
Calabi--Yau variety of dimension $n$ from section
\ref{largeindexintrosection}. Then
$$m'_n\geq \frac{(s_n -1)(2s_{n}-3)}{9\alpha/8}$$
if $n$ is even and
$$m'_n\geq \frac{(s_n -1)(2s_{n}-3)}{6\alpha^2/7}$$
if $n$ is odd.
\end{lemma}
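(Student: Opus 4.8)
The plan is to rewrite $m'_n$ as a product of quantities that were already controlled in the proof of Lemma \ref{constant}, and then to compare with the target value using the product formula $\alpha=2\prod_{j\geq 1}[s_{j+1}/(s_j-1)^2]^{2^{j-1}}$ for $\alpha$.

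First I would record the algebraic reductions. Since the exponents $b_j$ of $V'$ agree with those of Esser's hypersurface, the alternating sum $B_{1,2r,\ldots,r,r+1}$ in \eqref{m'definition} is exactly Esser's number $m_{2r}$, and $b_0=s_0=2$; hence for $n=2r+1$,
$$m'_{2r+1}=2\,b'_{2r+1}\,m_{2r}-1.$$
For $n=2r$, the identity $B_{i_1 i_2 i_3\cdots i_k}=b_{i_1}b_{i_2}B_{i_3\cdots i_k}-(b_{i_1}-1)$ (valid for $k$ even) together with $b_1=s_1=3$ gives both $m_{2r}=3\,b_{2r}\,g_2-2$ and
$$m'_{2r}=3\,b'_{2r}\,g_2-2,$$
where $g_2=B_{2,2r-1,\ldots,r,r+1}$ is the number appearing in Proposition \ref{product}.

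Next I would bound the factors from below. From the definition, $b'_{2r+1}=\frac12(1+b_1\cdots b_{2r}+(s_1-1)t_1)$ with $t_1=B_{r+1,r,\ldots,2r,1}$; as $t_1$ is an alternating sum over the index set $\{1,\ldots,2r\}$ with final index $1$ and $b_1=3$, keeping only its first two terms gives $t_1\geq\frac23 b_1\cdots b_{2r}$, hence $b'_{2r+1}\geq\frac76 b_1\cdots b_{2r}$. Likewise $b'_{2r}=\frac13(1+8\,b_2\cdots b_{2r-1}+12\,t_2)$ with $t_2=B_{r+1,r,\ldots,2r-1,2}$ over $\{2,\ldots,2r-1\}$ with final index $2$ and $b_2=7$, so $t_2\geq\frac67 b_2\cdots b_{2r-1}$ and $b'_{2r}\geq\frac{128}{21}b_2\cdots b_{2r-1}$. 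For the remaining factors I would reuse the estimates in the proof of Lemma \ref{constant}: the inductive lower bounds on the $b_{r+a}$ there give $b_1\cdots b_{2r}\geq\frac12(s_{2r+1}-1)P_r$ and $m_{2r}\geq\frac12(s_{2r+1}-1)P_r$ with $P_r=\prod_{j=1}^r[(s_j-1)^2/s_{j+1}]^{2^{j-1}}$; and for $r\geq 2$ the same bounds give $g_2\geq b_2\cdots b_r(b_{r+1}-1)b_{r+2}\cdots b_{2r-1}$ and $b_2\cdots b_{2r-1}\geq b_2\cdots b_r(b_{r+1}-1)b_{r+2}\cdots b_{2r-1}\geq\frac{s_{2r}-1}{6}R_r$, where $R_r=\prod_{j=2}^r[(s_j-1)^2/s_{j+1}]^{2^{j-2}}$.

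Finally I would compare. The product formula for $\alpha$ gives $P_r>2/\alpha$, and, on splitting off its $j=1$ factor $(s_1-1)^2/s_2=4/7$, also $R_r^2>7/(2\alpha)$ (since $R_r$ exceeds the convergent product $\prod_{j\geq 2}[(s_j-1)^2/s_{j+1}]^{2^{j-2}}$, whose square is $7/(2\alpha)$). Substituting, $m'_{2r+1}=2b'_{2r+1}m_{2r}-1\geq\frac73(b_1\cdots b_{2r})m_{2r}-1>\frac{7(s_{2r+1}-1)^2}{3\alpha^2}-1$ and, for $r\geq 2$, $m'_{2r}=3b'_{2r}g_2-2\geq\frac{128}{7}(b_2\cdots b_{2r-1})g_2-2>\frac{16(s_{2r}-1)^2}{9\alpha}-2$. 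Using $(s_n-1)(2s_n-3)=2(s_n-1)^2-(s_n-1)$, the claimed bound then reduces to $s_n-1\geq 6\alpha^2/7\doteq 26.1$ for odd $n$ and to $s_n-1\geq 9\alpha/4\doteq 12.4$ for even $n$; these hold since $s_n\geq s_3=43$ for odd $n\geq 3$ and $s_n\geq s_4=1807$ for even $n\geq 4$. The single remaining case $n=2$ is immediate: $m'_2=19$, whereas the target is $\doteq 10.6$.

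The main obstacle is that both bounds are asymptotically sharp (the ratios $(s_n-1)(2s_n-3)/m'_n$ tend to $6\alpha^2/7$ for odd $n$ and to $9\alpha/8$ for even $n$), so no step can afford to lose more than a bounded additive amount. In particular $R_r^2>7/(2\alpha)$ is an equality in the limit, so the only slack available is the tiny gap $R_r>R_\infty$ together with the additive constants $-1$ and $-2$, and this is exactly enough once $s_n$ has passed the small explicit thresholds above. The bookkeeping needed to justify the two-term estimates for $t_1,t_2,g_2$ and the exponent patterns in $P_r$ and $R_r$ is routine but must be carried out carefully, and the smallest dimension must be handled by hand.
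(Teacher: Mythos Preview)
Your proposal is correct and follows essentially the same approach as the paper: bound $m'_n$ below by a product of the exponents $b_j$ (with $b'_n$ replacing $b_n$), feed in the inductive lower bounds on the $b_{r+a}$ from the proof of Lemma~\ref{constant}, and compare with the product formula for $\alpha$. The only cosmetic differences are that you organize the even case via the auxiliary product $R_r=\prod_{j=2}^r[(s_j-1)^2/s_{j+1}]^{2^{j-2}}$ and the identity $R_\infty^2=7/(2\alpha)$ (tracking the additive constants $-1$, $-2$ explicitly), whereas the paper combines everything into a single product bounded by $P_r$ and absorbs the slack via $s_n(s_n-1)\geq\frac12(s_n-1)(2s_n-3)$; you also spell out the odd case, which the paper omits.
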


Here $9\alpha/8\doteq 6.213227$ and $6\alpha^2/7\doteq 26.144635$.
Thus the expected index of the klt Calabi--Yau variety in 
section \ref{largeindexintrosection}
is within a constant factor of the conjecturally largest index
among all klt Calabi--Yau pairs with standard coefficients.

\begin{proof}
The statement is easy for $n=2$, and so we can assume that $n>2$.
The index $m'_n$ is defined in terms of the numbers
$b_0,\ldots,b_{n-1}$ (the same in the small-mld example)
together with $b'_n$ (section \ref{largeindexintrosection}).
For $n=2r$ with $r>1$, we have
\begin{align*}
b'_{2r}&=\frac{1}{3}[1+8b_2\cdots b_{2r-1}+12B_{r+1,r,\cdots,2r-1,2}]\\
&\geq \frac{1}{3}[8b_2\cdots b_{2r-1}+12(b_2-1)b_3\cdots b_{2r-1}]\\
&=\frac{128}{3}b_3\cdots b_{2r-1}\\
&\geq \frac{16}{9} s_{2r}\frac{(s_1-1)^2}{s_2}\bigg[
\frac{(s_2-1)^2}{s_3}\bigg]^{2^0}
\cdots \bigg[\frac{(s_r-1)^2}{s_{r+1}}\bigg]^{2^{r-2}},
\end{align*}
using the formula for $b_{r+1}-1$ and the lower bounds for $b_{r+a}$
from the proof of Lemma \ref{constant}.
Therefore,
\begin{align*}
m'_{2r}&=B'_{1,2r,\ldots,r,r+1}\\
&\geq b_1\cdots b_r(b_{r+1}-1)b_{r+2}\cdots b_{2r-1}b'_{2r}\\
&\geq \frac{16}{9}s_1\cdots s_{2r}\bigg[\frac{(s_1-1)^2}{s_2}\bigg]^{2^0}
\cdots \bigg[\frac{(s_r-1)^2}{s_{r+1}}\bigg]^{2^{r-1}}\\
&\geq \frac{8}{9}(s_{2r}-1)(2s_{2r}-3) \frac{1}{2}
\bigg[\frac{(s_{1}-1)^2}{s_{2}}\bigg]^{2^{0}}
\cdots \bigg[\frac{(s_r-1)^2}{s_{r+1}}\bigg]^{2^{r-1}}\\
&\geq \frac{8}{9\alpha}(s_{2r}-1)(2s_{2r}-3).
\end{align*}
We omit the similar argument for $n=2r+1$.
\end{proof}


\small \sc  Department of Mathematics, Princeton University, Fine Hall, Washington Road, Princeton, NJ 08544-1000

esserl@math.princeton.edu

\medskip

\small \sc UCLA Mathematics Department, Box 951555,
Los Angeles, CA 90095-1555

totaro@math.ucla.edu


\begin{thebibliography}{99}
\bibitem{AL} V.~Alexeev and W.~Liu. Open surfaces
of small volume. {\it Algebr.\ Geom.\ }{\bf 6 }(2019),
312--327.

\bibitem{ABS}
M.~Artebani, S.~Boissi\`ere, and A.~Sarti.
The Berglund-H\"ubsch-Chiodo-Ruan mirror symmetry
for K3 surfaces. {\it J.\ Math.\ Pure Appl.\ }{\bf 102 }(2014),
758--781.

\bibitem{Essermld} L.~Esser. Minimal log discrepancies
and mirror symmetry.
{\it Forum Math.\ Sigma }{\bf 12 }(2024), e23.

\bibitem{ETWgeneral} L.~Esser, B.~Totaro, and C.~Wang.
Varieties of general type with doubly exponential
asymptotics.
{\it Trans.\ Amer.\ Math.\ Soc.\ Ser.\ B }{\bf 10 }(2023),
288--309.

\bibitem{ETWCalabi} L.~Esser, B.~Totaro, and C.~Wang.
Calabi--Yau varieties of large index.
\url{arXiv:2209.04597}

\bibitem{FKL} M.~Fulger, J.~Koll\'ar, and B.~Lehmann.
Volume and Hilbert function of $\R$-divisors.
{\it Michigan Math.\ J.\ }{\bf 65 }(2016), 371--387.

\bibitem{HMXACC}
C.~Hacon, J.~M\textsuperscript{c}Kernan and C.~Xu.
ACC for log canonical thresholds.
{\it Ann.\ of Math.\ }{\bf 180 }(2014), no. 2, 523--571.

\bibitem{HMXbounded} C.~Hacon, J.~M\textsuperscript{c}Kernan,
and C.~Xu. Boundedness of moduli of varieties
of general type. {\it J.\ Eur.\ Math.\ Soc.\ }{\bf 20 }(2018),
865--901.

\bibitem{Iano-Fletcher} A.~R.~Iano-Fletcher. Working
with weighted complete intersections.
{\it Explicit birational geometry of 3-folds}, 101--173.
London Math.\ Soc.\ Lecture Notes Ser., 281, Cambridge Univ.\ Press,
Cambridge (2000).

\bibitem{Kollarlog} J.~Koll\'ar. Log surfaces of general type:
some conjectures. {\it Classification of algebraic
varieties }(L'Aquila, 1992), 261--275.
Contemp.\ Math.\ 162, Amer.\ Math.\ Soc.\ (1994).

\bibitem{Kollarsing} J.~Koll\'ar. {\it Singularities of the minimal
model program. }With the collaboration of S\'andor Kov\'acs.
Cambridge (2013).

\bibitem{Liu} J.~Liu. Remark on complements
on surfaces. {\it Forum Math.\ Sigma }{\bf 11 }(2023),
e42.

\bibitem{LS} J.~Liu and V.~Shokurov.
Optimal bounds on surfaces.
\url{arXiv:2305.19248}

\bibitem{Totaroklt} B.~Totaro. Klt varieties
with conjecturally minimal volume.
{\it Int.\ Math.\ Res.\ Not.\ IMRN }{\bf 2024 }(2024), 462--491.

\end{thebibliography}
\end{document}